\newtheorem{thm}{Theorem}[section]
\newtheorem{prp}[thm]{Proposition}
\newtheorem{lmm}[thm]{Lemma}
\newtheorem{crl}[thm]{Corollary}
\theoremstyle{definition}
\newtheorem{dfn}[thm]{Definition}
\theoremstyle{remark}
\newtheorem{rmk}[thm]{Remark}
\numberwithin{equation}{section}
\def\lra{\longrightarrow}
\def\xlra#1{\xrightarrow{#1}}
\def\xlla#1{\xleftarrow{#1}}
\def\BE#1{\begin{equation}\label{#1}}
\def\EE{\end{equation}}
\def\lr#1{\langle#1\rangle}
\def\flr#1{\left\lfloor{#1}\right\rfloor}
\def\blr#1{\big\langle#1\big\rangle}
\def\wt#1{\widetilde{#1}}
\def\wh#1{\widehat{#1}}
\def\ov#1{\overline{#1}}
\def\eref#1{(\ref{#1})}
\def\tn#1{\textnormal{#1}}
\def\sf#1{\textsf{#1}}
\def\Ga{\Gamma}
\def\La{\Lambda}
\def\Si{\Sigma}
\def\al{\alpha}
\def\ga{\gamma}
\def\eps{\epsilon}
\def\io{\iota}
\def\na{\nabla}
\def\om{\omega}
\def\si{\sigma}
\def\th{\theta}
\def\ve{\varepsilon}
\def\vph{\varphi}
\def\vt{\vartheta}
\def\ze{\zeta}
\def\C{\mathbb C}
\def\cC{\mathcal C}  
\def\D{\mathbb D}
\def\fD{\mathfrak D}
\def\nd{\tn{d}}
\def\nE{\tn{E}}
\def\ne{\textnormal{e}}
\def\fI{\mathfrak i}
\def\cJ{\mathcal J}
\def\fJ{\mathfrak j}
\def\cL{\mathcal L}
\def\cM{\mathcal M}
\def\fM{\mathfrak M}
\def\cN{\mathcal N}
\def\cO{\mathcal O}
\def\P{\mathbb P}
\def\fP{\mathfrak P}
\def\R{\mathbb{R}}
\def\bT{\mathbb{T}}
\def\nV{\tn{V}}
\def\Z{\mathbb{Z}}
\def\Q{\mathbb Q}
\def\GL{\tn{GL}}
\def\GW{\textnormal{GW}}
\def\a{\mathbf a}
\def\fa{\mathfrak a}
\def\fd{\mathfrak d}
\def\fg{\mathfrak g}
\def\u{\mathbf u}
\def\fc{\mathfrak c}
\def\fj{\mathfrak j}
\def\ff{\mathfrak f}
\def\fo{\mathfrak o}
\def\fs{\mathfrak s}
\def\Cntr{\tn{Cntr}}
\def\tnd{\tn{d}}
\def\Edg{\tn{Edg}}
\def\ev{\tn{ev}}
\def\id{\tn{id}}
\def\ind{\tn{ind}}
\def\pt{\tn{pt}}
\def\rk{\tn{rk}}
\def\top{\tn{top}}
\def\val{\tn{val}}
\def\Ver{\tn{Ver}}
\def\vir{\tn{vir}}
\def\dbar{\bar\partial}
\def\prt{\partial}
\def\eset{\emptyset}
\def\i{\infty}
\def\w{\wedge}
\def\bu{\bullet}
\def\0{\mathbf 0}
\begin{document}

\title{Geometric Properties of Real Gromov-Witten Invariants}
\author{Penka Georgieva\thanks{Partially supported by ERC Grant ROGW-864919} 
$~$and 
Aleksey Zinger\thanks{Partially supported by NSF grant DMS 2301493}}
\date{\today}
\maketitle

\begin{abstract}
\noindent
We collect geometric properties of the
all-genus real Gromov-Witten theory and provide updates on its development
since its introduction in~2015.
We bring attention to a modification of the original construction
of this theory which is applicable in a broader setting,
obtain properties of the orientations of the moduli spaces of real maps
under this modification which had previously been obtained for the original construction,
and compare such orientations obtained via the two constructions when
both constructions are applicable.'
In particular, the orientation of the moduli space of real maps from
a disjoint union of domains may not be the product orientation
of the  moduli spaces of real maps from its components,
a possibility overlooked in our past~work.
\end{abstract}

\tableofcontents

\section{Introduction}
\label{intro_sec}

\noindent
The foundations of {\it complex} Gromov-Witten (or GW-) theory, i.e.~of counts 
of $J$-holomorphic curves in symplectic manifolds, were established in the 1990s 
and have been spectacularly applied  in symplectic topology,
algebraic geometry, and string theory since then.
On the other hand, the progress in {\it real} GW-theory, 
i.e.~of counts of $J$-holomorphic curves in symplectic manifolds 
preserved by anti-symplectic involutions, has been much slower,
as its mathematical foundations in positive genera did not even exist 
until the mid 2010s.
Since the introduction of real positive-genera GW-theory in~\cite{RealGWsI},
its has been extended to broader settings in~\cite{GI} by modifying
the approach of~\cite{RealGWsI} to orienting the moduli spaces of stable real maps. 
The present paper obtains geometric properties of the resulting orientations
analogous to the geometric properties obtained in~\cite{RealGWsII} for
the orientations constructed in~\cite{RealGWsI} and
compares the orientations obtained via the approaches of~\cite{RealGWsI}
and~\cite{GI} when both are applicable.
We also update on developments in real GW-theory since its introduction
and correct some oversight in past work.
We hope that the present paper and its more algebraic companion~\cite{RealGWsAlg}
will facilitate further development of real GW-theory.\\

\noindent
A \sf{real manifold} is a pair $(X,\phi)$ consisting of a (smooth) manifold~$X$
and an involution~$\phi$ on~$X$, i.e.~$\phi$ is a diffeomorphism of~$X$ such that~$\phi^2\!=\!\id_X$.
In such a case, we denote by $X^{\phi}\!\subset\!X$ the fixed locus of~$\phi$.
A \sf{real bundle pair} $(V,\vph)$ over a real manifold~$(X,\phi)$ consists 
of a complex vector bundle $V\!\lra\!X$ and a conjugation~$\vph$ on $V$ lifting~$\phi$,
i.e.~$\vph^2\!=\!\id_V$ and
$\vph\!:V_x\!\lra\!\ov{V}_{\phi(x)}$ is a $\C$-linear isomorphism for every $x\!\in\!X$.
In such a case, \hbox{$V^{\vph}\!\lra\!X^{\phi}$} is a real vector bundle.
For a real bundle pair~$(V,\vph)$ over~$(X,\phi)$, we  denote~by
$$\La_{\C}^{\top}(V,\vph)=\big(\La_{\C}^{\top}V,\La_{\C}^{\top}\vph\big)$$
the top exterior power of $V$ over $\C$ with the induced conjugation.
Direct sums, duals, and tensor products over~$\C$ of real bundle pairs over~$(X,\phi)$
are again real bundle pairs over~$(X,\phi)$.\\

\noindent
If $(X,\phi)$ is a real manifold and $L\!\lra\!X$ is a complex vector bundle, the homomorphisms 
\begin{alignat*}{2}
\phi_L^{\oplus}\!:L\!\oplus\!\phi^*\ov{L}&\lra L\!\oplus\!\phi^*\ov{L}, 
&\quad\phi_L^{\oplus}(v,w)&=(w,v),  \qquad\hbox{and}\\
\phi_L^{\otimes}\!:L\!\otimes_{\C}\!\phi^*\ov{L}&\lra L\!\otimes_{\C}\!\phi^*\ov{L},
&\quad\phi_L^{\otimes}(v\!\otimes\!w)&=w\!\otimes\!v,
\end{alignat*}
are conjugations covering~$\phi$.
The homomorphism
$$\big(L\!\oplus\!\phi^*\ov{L}\big)^{\phi_L^{\oplus}}\lra L|_{X^{\phi}}, \quad
(v,w)\lra v,$$
of real vector bundles over~$X^{\phi}$ is then an isomorphism.
We will call the orientation on the domain of this isomorphism induced
from the complex orientation of the target \sf{the projection orientation}
of~$(L\!\oplus\!\phi^*\ov{L})^{\phi_L^{\oplus}}$.
If $\rk_{\C}\!L\!=\!1$, then 
\BE{Lasumisom_e} \big(L\!\otimes_{\C}\!\phi^*\ov{L},\phi_L^{\otimes}\big)
\lra \La_{\C}^{\top}\big(L\!\oplus\!\phi^*\ov{L},\phi_L^{\oplus}\big), \qquad
v\!\otimes\! w\lra -\fI(v,0)\!\w\!(0,w),\EE
is an isomorphism of real bundle pairs over~$(X,\phi)$.
The induced isomorphism
\begin{equation*}\begin{split}
&\big\{rv\!\otimes_{\C}\!v\!\in\!L\!\otimes_{\C}\!\phi^*\ov{L}\!:
v\!\in\!L|_{X^{\phi}},\,r\!\in\!\R\big\}
\!=\!\big(L\!\otimes_{\C}\!\phi^*\ov{L}\big)^{\phi_L^{\otimes}}\\
&\hspace{2in}
\lra \big(\La_{\C}^{\top}\big(L\!\oplus\!\phi^*\ov{L}\big)\!\big)^{\La_{\C}^{\top}\phi_L^{\oplus}}
\!=\!\La_{\R}^{\top}\big(\!\big(L\!\oplus\!\phi^*\ov{L}\big)^{\phi_L^{\oplus}}\big)
\end{split}\end{equation*}
of real line bundles over~$X^{\phi}$ intertwines the canonical orientation on the domain
(given by the standard orientation of~$\R$) and the projection orientation on 
the~target.

\begin{dfn}\label{realorient_dfn0}
Let $(X,\phi)$ be a real manifold.
A \sf{real orientation} $(L,[\psi],\fs)$ on a real bundle pair $(V,\vph)$ over~$(X,\phi)$
consists~of 
\begin{enumerate}[label=(RO\arabic*),leftmargin=*]

\item\label{LBP_it0} a complex line bundle $L\!\lra\!X$ such that 
\BE{realorient_e0} w_2(V^{\vph})=w_2(L)|_{X^{\phi}} \qquad\hbox{and}\qquad
\La_{\C}^{\top}(V,\vph)\approx\big(L\!\otimes_{\C}\!\phi^*\ov{L},\phi_L^{\otimes}\big),\EE

\item\label{isom_it0} a homotopy class~$[\psi]$ of isomorphisms 
of real bundle pairs in~\eref{realorient_e0}, and

\item\label{spin_it0} a spin structure~$\fs$ on the real vector bundle
$V^{\vph}\!\oplus\!L^*|_{X^{\phi}}$ over~$X^{\phi}$
compatible with the orientation induced by~$[\psi]$.

\end{enumerate}
\end{dfn}

\vspace{.1in}

\noindent
By~\ref{isom_it0}, a real orientation on $(V,\vph)$ determines a homotopy class of isomorphisms
\BE{Vorientisom_e}\La_{\R}^{\top}V^{\vph}=\big(\La_{\C}^{\top}V\big)^{\La_{\C}^{\top}\vph}
\approx \big(L\!\otimes_{\C}\!\phi^*\ov{L}\big)^{\phi_L^{\otimes}}
=\big\{rv\!\otimes_{\C}\!v\!\in\!L\!\otimes_{\C}\!\phi^*\ov{L}\!:
v\!\in\!L|_{X^{\phi}},\,r\!\in\!\R\big\}\EE
and thus an orientation on~$V^{\vph}$.
In particular, the real vector bundle $V^{\vph}\!\oplus\!L^*|_{X^{\phi}}$ over~$X^{\phi}$
in~\ref{spin_it0} is oriented.
By the first condition in~\eref{realorient_e0}, it admits a spin structure.
By \cite[Theorems~1.1,1.2]{RBP}, a real orientation~$(L,[\psi],\fs)$ 
on a  rank~$k$ real bundle pair~$(V,\vph)$ over a symmetric surface~$(\Si,\si)$,
possibly nodal and disconnected, determines a homotopy class of isomorphisms
\BE{RBPisom_e2}\big(V\!\oplus\!(L^*\!\oplus\!\si^*\ov{L}^*),\vph\!\oplus\!\si_{L^*}^{\oplus}\big)
\approx \big(\Si\!\times\!\C^{k+2},\si\!\times\!\fc\big)\EE
of real bundle pairs over~$(\Si,\si)$,
where $\fc$ is the standard conjugation on~$\C^{k+2}$; 
see Section~\ref{nota_sec} for the terminology.
This homotopy class of isomorphisms is one of the key ingredients 
in the construction of real GW-invariants in~\cite{RealGWsI};
see also \cite{RealGWsSumm} and Section~\ref{MSorient_subs1}.\\

\noindent
A \sf{real symplectic manifold} is a triple $(X,\om,\phi)$ consisting 
of a symplectic manifold~$(X,\om)$ and an anti-symplectic involution~$\phi$ on~$X$,
i.e.~\hbox{$\phi^*\om\!=\!-\om$}.
The fixed locus~$X^{\phi}$ of~$\phi$ is then a Lagrangian submanifold of~$(X,\om)$ and
$$TX^{\phi}=\big\{\dot{x}\!\in\!TX|_{X^{\phi}}\!:\nd\phi(\dot{x})\!=\!\dot{x}\big\}.$$
A \sf{real orientation} on a real symplectic manifold~$(X,\om,\phi)$
is a real orientation on the real bundle pair~$(TX,\nd\phi)$ over~$(X,\phi)$
in the sense of Definition~\ref{realorient_dfn0}.
We call a real symplectic manifold admitting a real orientation \sf{real-orientable}.
The anti-holomorphic involutions 
\begin{equation*}\begin{aligned}
\tau_n\!:\P^{n-1}&\lra\P^{n-1}, &
\tau_n\big([[Z_1,\ldots,Z_n]\big)&=\begin{cases}
[\ov{Z}_2,\ov{Z}_1,\ldots,\ov{Z}_n,\ov{Z}_{n-1}],&\hbox{if}~n\!\in\!2\Z;\\
[\ov{Z}_2,\ov{Z}_1,\ldots,\ov{Z}_{n-1},\ov{Z}_{n-2},\ov{Z}_n],&\hbox{if}~n\!\not\in\!2\Z;
\end{cases}\\
\eta_{2m}\!: \P^{2m-1}& \lra\P^{2m-1},&
\eta_{2m}\big([Z_1,\ldots,Z_{2m}]\big)&= 
\big[\ov{Z}_2,-\ov{Z}_1,\ldots,\ov{Z}_{2m},-\ov{Z}_{2m-1}\big],
\end{aligned}\end{equation*}
are anti-symplectic with respect to the Fubini-Study symplectic
forms~$\om_n$ and~$\om_{2m}$ on the complex projective spaces~$\P^{n-1}$ 
and~$\P^{2m-1}$, respectively.
The real symplectic manifolds $(\P^{n-1},\om_n,\tau_n)$ with $n\!\in\!2\Z$ and 
$(\P^{2m-1},\om_{2m},\eta_{2m})$ admit real orientations~$(L,[\psi],\fs)$ 
with \hbox{$L\!=\!\cO_{\P^{n-1}}(n/2)$} and \hbox{$L\!=\!\cO_{\P^{2m-1}}(m)$}, respectively.
Many other examples of compact real symplectic manifolds with real orientations
are described in \cite[Section~1.1]{RealGWsIII}; 
see also Section~\ref{RealGWth_subs} in the present paper.
These include many projective complete intersections,
such as the real quintic threefolds,
i.e.~the smooth hypersurfaces in~$\P^4$ cut out by degree~5 homogeneous polynomials
on~$\C^5$ with real coefficients;
they play a prominent role in the interactions of symplectic topology 
with string theory and algebraic geometry.\\

\noindent
For a symplectic manifold~$(X,\om)$, we denote by~$\cJ_{\om}$ 
the contractible space of $\om$-tamed almost complex structures on~$X$ and by 
\hbox{$c_1(X,\om)\!\in\!H^2(X;\Z)$} the first Chern class of the complex vector bundle~$(TX,J)$
over~$X$.
For a real symplectic manifold~$(X,\om,\phi)$, let
$$\cJ_{\om}^{\phi}=\big\{J\!\in\!\cJ_{\om}\!:\phi^*J\!=\!-J\big\}.$$
For $g\!\in\!\Z$, $\ell\!\in\!\Z^{\ge0}$, \hbox{$B\!\in\!H_2(X;\Z)$}, and $J\!\in\!\cJ_{\om}^{\phi}$,
we denote by $\ov\fM_{g,\ell}^{\phi;\bu}(B;J)$
the moduli space of stable real $J$-holomorphic degree~$B$ maps 
from closed, possibly nodal and disconnected, symmetric Riemann surfaces of arithmetic genus~$g$
with $\ell$~conjugate pairs of marked points.

\begin{thm}\label{main_thm}
Suppose $(X,\om,\phi)$ is a real-orientable $2n$-manifold with $n\!\not\in\!2\Z$
and $J\!\in\!\cJ_{\om}^{\phi}$.
\begin{enumerate}[label=(\arabic*),leftmargin=*]

\item A real orientation $(L,[\psi],\fs)$ on~$(X,\om,\phi)$ induces orientations  
on the moduli spaces $\ov\fM_{g,\ell}^{\phi;\bu}(B;J)$
for all \hbox{$g,\ell\!\in\!\Z^{\ge0}$} and \hbox{$B\!\in\!H_2(X;\Z)$}.

\item The induced orientations satisfy the properties stated in
Propositions~\ref{unionorient_prp}-\ref{RnodisomE_prp} and~\ref{RelSpinOrient_prp}.

\end{enumerate}

\end{thm}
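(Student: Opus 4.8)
\medskip\noindent
\emph{Sketch of proof.}
The plan for Part~(1) is to follow the strategy of~\cite{RealGWsI}: an orientation of $\ov\fM_{g,\ell}^{\phi;\bu}(B;J)$ is equivalent to a coherent orientation, over this moduli space, of the determinant line~$\det D_u$ of the family of linearized real Cauchy--Riemann operators governing the deformation theory of stable real maps --- here $D_u$ incorporates the linearization of~$\dbar_J$ along the real locus on $u^*(TX,\nd\phi)$ together with the deformations of the marked symmetric domain~$(\Si,\si)$ of the representative $u\!:(\Si,\si)\!\lra\!(X,\phi)$. The only new input, relative to~\cite{RealGWsI}, is that under the weaker hypotheses of Definition~\ref{realorient_dfn0} the homotopy class of trivializations~\eref{RBPisom_e2} remains available by~\cite{RBP}. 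I would use it as follows. First, tensor the family $D_u$ with the family of real Cauchy--Riemann operators on the conjugate-doubled auxiliary bundle pair $\big(L^*\!\oplus\!\si^*\ov L^*,\si_{L^*}^{\oplus}\big)$ obtained from $u^*L$, and apply~\eref{RBPisom_e2} (with $V$ there taken to be $u^*(TX,\nd\phi)$) to identify the resulting determinant line with the one attached to the \emph{trivial} real bundle pair $\big(\Si\!\times\!\C^{k+2},\si\!\times\!\fc\big)$. For a trivial real bundle pair of rank $\ge1$ that determinant line carries a canonical homotopy class of orientations over the moduli of marked symmetric curves; this is precisely where the spin structure~$\fs$ of~\ref{spin_it0} is used, namely to rigidify the homotopy class in~\eref{RBPisom_e2}, hence the orientation one pulls back. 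Next, observe that the determinant line of the real Cauchy--Riemann operator on the conjugate-doubled auxiliary bundle pair is itself canonically (complex-)oriented --- cf.~the projection-orientation discussion following~\eref{Lasumisom_e} applied to~$u^*L^*$ --- so cancelling it against the auxiliary factor produces the sought orientation of~$\det D_u$, which visibly varies continuously in~$u$. The isomorphism class~$[\psi]$ of~\ref{isom_it0} enters through the induced orientation~\eref{Vorientisom_e} of~$V^{\vph}$, which must be the one compatible with~$\fs$; and I expect the parity hypothesis $n\!\not\in\!2\Z$ to be forced in keeping the construction invariant under the symmetries of the moduli problem --- permutations of the $\ell$ conjugate pairs of marked points, and the conjugation symmetry relating the two halves of each symmetric domain --- and compatible with the doubling used to trivialize the auxiliary operators.

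The substantive point in Part~(1) is \emph{coherence} over the entire compactification, i.e.~across all boundary strata of nodal, possibly disconnected, symmetric domains. Here the plan is to use that~\eref{RBPisom_e2} is equally available for nodal and disconnected~$(\Si,\si)$, that the relevant determinant line bundle extends across the boundary strata through the standard normalization and gluing exact sequences for Cauchy--Riemann operators --- with each conjugate pair of nodes contributing a complex, hence canonically oriented, smoothing parameter and each real node a one-dimensional real smoothing parameter whose sign must be tracked against the orientation conventions fixed before Definition~\ref{realorient_dfn0} --- and that the canonical orientations on the auxiliary conjugate-doubled and trivial operators are manifestly compatible with these exact sequences. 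I expect this step --- verifying that the pointwise recipe above patches into a genuine orientation of the compactified moduli space, with all node-smoothing and reindexing signs accounted for --- to be the main obstacle, and it is where the parity hypothesis does real work.

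For Part~(2) the plan is to trace the construction of Part~(1) through each geometric operation in turn. For Proposition~\ref{unionorient_prp} (behavior under disjoint unions of domains) I would compare $\det D_u$ for a disconnected domain with the tensor product of the corresponding determinant lines over its connected components; because the auxiliary stabilization by $L^*\!\oplus\!\si^*\ov L^*$ is performed on the full disconnected domain rather than componentwise, and because reindexing marked points and permuting the components' factors of the determinant line introduce signs governed by the real indices of the operators involved, I anticipate that the induced orientation is \emph{not} the product of the componentwise orientations --- this is the oversight corrected in the present paper, and the discrepancy must be computed explicitly in terms of those indices and the parity of~$n$. For the intervening Propositions up through~\ref{RnodisomE_prp} (the various nodal-degeneration and edge-smoothing comparisons) I would reuse the normalization and gluing exact sequences from the coherence step and read off the orientation comparison directly from them. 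For Proposition~\ref{RelSpinOrient_prp} (the relative-spin reformulation) I would check that replacing the spin structure~$\fs$ on $V^{\vph}\!\oplus\!L^*|_{X^{\phi}}$ by the corresponding relative spin datum leaves~\eref{RBPisom_e2}, and hence the induced orientation, unchanged. In each case the task is careful bookkeeping rather than new geometry --- but, as the disjoint-union case already shows, bookkeeping that must be carried out with care.
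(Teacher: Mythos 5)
Your strategy for Part~(1) --- stabilize by the conjugate-doubled auxiliary $(L^*\!\oplus\!\si^*\ov L^*,\si_{L^*}^{\oplus})$, invoke the trivialization class~\eref{RBPisom_e2} from~\cite{RBP} (this is indeed the only place the weakening of the real-orientation notion is felt), and cancel against the projection orientation on the auxiliary factor --- is the paper's strategy, and your remarks on where $\fs$ and $[\psi]$ enter are accurate. But there is a genuine gap at the step you describe as ``for a trivial real bundle pair of rank $\ge1$ that determinant line carries a canonical homotopy class of orientations over the moduli of marked symmetric curves.'' This is false: $\det(k\dbar_{(\Si,\si)})\!=\!(\det\dbar_{(\Si,\si)})^{\otimes k}$ has no canonical orientation for odd~$k$, since $\ind\dbar_{(\Si,\si)}\!=\!1\!-\!g$ is of variable parity; the ranks arising here ($k\!=\!n$ and $k\!=\!n\!+\!2$) are both odd under the hypothesis $n\!\notin\!2\Z$. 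As a consequence the recipe you outline, applied to $u^*(TX,\nd\phi)$ alone, terminates with $\det D_{(TX,\nd\phi)}\!\approx\!(\det\dbar_{(\Si,\si)})^{\otimes n}$, which is not yet oriented.

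What closes this gap in the paper (Section~\ref{MSorient_subs1}) is the separate orientation of the Deligne--Mumford line: $\La_\R^{\top}(T\R\ov\cM_{g,\ell}^{\bu})\!\otimes\!\det\dbar_\C$ carries a natural orientation (via Steps~\ref{LSDolb_it}--\ref{ROrientTwist_it}, essentially \cite[Propositions~5.9,\,6.1]{RealGWsI}), so that the forgetful-morphism isomorphism~\eref{RfMisom_e} yields $\La_\R^{\top}(T\fM_{g,\ell}^{\phi;\star}(B;J))\!\approx\!\ff^*\!\big((\det\dbar_\C)^{\otimes(n+1)}\big)$. Only now does $n\!\notin\!2\Z$ do its work: the exponent $n\!+\!1$ is even, so the right-hand side splits into $(n\!+\!1)/2$ canonically oriented copies of $\ff^*\!\big((\det\dbar_\C)^{\otimes2}\big)$. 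Your attribution of the parity hypothesis to ``permutations of marked points'' and ``conjugation symmetry'' is not the actual mechanism; those symmetries are compatible for any~$n$. You are also folding the domain deformations into $D_u$ while simultaneously applying~\eref{RBPisom_e2} only to $u^*(TX,\nd\phi)$, which cannot be done in one step: the two factors must be oriented by different means (the real orientation for the map part, the Deligne--Mumford orientation for the domain part) and then tensored. Your plan for Part~(2) correctly flags the disjoint-union comparison as the substantive new item and correctly proposes to read the remaining comparisons off the normalization/gluing exact triples, but the signs one ends up tracking are exactly the index parities that enter via the $(n\!+\!1)/2$ squares above, so the Part~(1) correction is needed before the Part~(2) bookkeeping can be carried out.
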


\noindent
Under the assumptions of Theorem~\ref{main_thm}, 
the (virtual) dimensions of all moduli spaces $\ov\fM_{g,\ell}^{\phi;\bu}(B;J)$,
\BE{RfMdim_e}
\dim^{\vir}\ov\fM_{g,\ell}^{\phi;\bu}(B;J)
\equiv(1\!-\!g)(n\!-\!3)\!+\!2\ell\!+\!\blr{c_1(X,\om),B},\EE
are even. In particular, the product orientation on the Cartesian product of two such spaces
is independent of their ordering.\\

\noindent
The notion of real orientation provided by \cite[Definition~5.1]{RealGWsI} 
and used in~\cite{RealGWsI} to orient moduli spaces of stable real maps
replaces $(L\!\otimes_{\C}\!\phi^*\ov{L},\phi_L^{\otimes})$ and 
$(L^*\!\oplus\!\phi^*\ov{L}^*,\phi_{L^*}^{\oplus})$ in~\eref{realorient_e0} and~\eref{RBPisom_e2}
by $(L,\wt\phi)^{\otimes2}$ and $2(L,\wt\phi)^*$, respectively, 
for a real line bundle pair~$(L,\wt\phi)$ over~$(X,\phi)$.
The real vector bundle \hbox{$(2L)^{2\wt\phi}\!=\!2L^{\wt\phi}$} over~$X^{\phi}$
is then canonically oriented by taking the same orientation on both copies of~$L^{\wt\phi}$.
The homomorphism
\BE{GIgen_e} \Phi_L\!:2(L,\wt\phi)\lra\big(L\!\oplus\!\phi^*\ov{L},\phi_L^{\oplus}\big),
\quad \Phi_L(v,w)=\big(v\!+\!\fI w,\wt\phi(v\!-\!\fI w)\!\big),\EE
is an isomorphism of real bundle pairs over~$(X,\phi)$.
The restriction
\BE{GIgen_e2} \Phi_L\!:(2L)^{2\wt\phi}\lra \big(L\!\oplus\!\ov{L}\big)^{\phi_L^{\oplus}}\EE
intertwines the canonical orientation on its domain
and the projection orientation on its target.
Definition~\ref{realorient_dfn0}, 
which is a slightly reworded version of the notion of \sf{twisted (real) orientation data}
introduced in \cite[Definition~A.1]{GI},
thus weakens (broadens) the notion of real orientation of \cite[Definition~5.1]{RealGWsI}.
In particular, $(\P^{2m-1},\om_{2m},\eta_{2m})$ with $m\!\not\in\!2\Z$ admits 
a real orientation in the sense of Definition~\ref{realorient_dfn0} in the present paper,
but not in the sense of \cite[Definition~5.1]{RealGWsI}.
If $(X,\phi)\!=\!(\Si,\si)$ is a symmetric surface,
the compositions of the isomorphisms \hbox{$\id\!\oplus\!\Phi_{L^*}$}
and~\eref{RBPisom_e2} yields a homotopy class of isomorphisms
\BE{RBPisom_e3}\big(V\!\oplus\!2L^*,\vph\!\oplus\!2\wt\si^*\big)
\approx \big(\Si\!\times\!\C^{k+2},\si\!\times\!\fc\big)\EE
of real bundle pairs over~$(\Si,\si)$.\\

\noindent
As noted in~\cite{GI}, the construction of orientations on the moduli spaces of real maps
in~\cite{RealGWsI} goes through almost verbatim with the weaker notion of real orientation 
of Definition~\ref{realorient_dfn0}; see also Section~\ref{MSorient_subs1}.
However, the resulting orientations of the moduli spaces are generally different
when both constructions apply;
the orientations on $\ov\fM_{g,\ell}^{\phi;\bu}(B;J)$ arising from 
the two constructions in such cases are compared~via Lemma~\ref{CvsCanorient_lmm} with
$$\rk_{\C}L\!=1 \qquad\hbox{and}\qquad
\deg L=-\blr{c_1(X,\om),B}/2.$$
Propositions~\ref{unionorient_prp}, \ref{Dblorient_prp}, and~\ref{RelSpinOrient_prp}
compare the orientations of various moduli spaces under the orientating procedure
described in Sections~\ref{MSorient_subs1}.
Propositions~\ref{Rnodisom_prp} and~\ref{RnodisomE_prp} do the same for 
the codimension~2 strata of maps with a conjugate pair of nodes in the domain
and the codimension~1 strata of maps with an isolated real node, respectively.\\

\noindent
Propositions~\ref{Dblorient_prp}, \ref{Rnodisom_prp}, and~\ref{RelSpinOrient_prp}
are analogues of Theorems~1.4, 1.2, and~1.5, respectively, in~\cite{RealGWsII},
which apply to the orientating procedure used in~\cite{RealGWsI}
(the statements of these theorems are also incorporated into the statements of 
Propositions~\ref{Dblorient_prp}, \ref{Rnodisom_prp}, and~\ref{RelSpinOrient_prp}).
On the other hand, Proposition~\ref{unionorient_prp} concerns the behavior of
the orientations of the moduli spaces of real maps under 
the disjoint union of the domains of the~maps.
Contrary to the expectation, the orientation at the disjoint union of two maps
may not be the product of the orientations at the two~maps.
This possibility was overlooked in our past work, resulting in oversights in some proofs,
but fortunately these oversights do not impact any material statements; 
see Section~\ref{updates_sec}.
Proposition~\ref{RnodisomE_prp} likewise has no analogue in our past work;
it had not been needed for the previously completed work in real positive-genus GW-theory,
but is used in~\cite{RealGWsAlg}.\\

\noindent
Similarly to the GW-invariants of a compact symplectic manifold~$(X,\om)$,
the real GW-invariants of a compact real symplectic manifold~$(X,\om,\phi)$
with a real orientation~$(L,[\psi],\fs)$ are rational numbers,
but an invertible transformation determined by the topology of~$(X,\phi)$ is 
expected to convert these numbers into integers.
If the (real) dimension of~$X$ is~6, such a transformation is expected to be linear.
This is confirmed in~\cite{RealGWvsEnum} for ``Fano" classes
in a compact real symplectic sixfold with a real orientation,
adapting the result of \cite[Theorem~1.5]{FanoGV} from the complex GW-theory 
to the real GW-theory; see also Section~\ref{RealGWvsEnum_sub}.
The structure of such a transformation for the complex GW-invariants of 
a compact Calabi-Yau symplectic sixfold is predicted 
in~\cite{GV} based on string theory considerations,
mathematically motivated in~\cite{BryanP1b,BryanP2}, and confirmed in~\cite{IPgv}. 
The analogue of this transformation for the real GW-invariants of 
a compact Calabi-Yau symplectic sixfold with an anti-symplectic involution
is predicted in \cite{BFM,Walcher2} based on string theory considerations
and mathematically motivated in~\cite{GI};
its mathematical confirmation is the subject of~\cite{GI3}.
Such a transformation for the complex genus~1 GW-invariants of 
a compact Calabi-Yau symplectic tenfold is mathematically motivated in~\cite{CY5},
but its adaptation to the real setting is yet to be obtained.
Other important geometric problems in real GW-theory include the mirror symmetry predictions
of~\cite{Walcher2}.
More algebraic directions for future research are indicated in~\cite{RealGWsAlg}.\\

\noindent
We gather the most frequently used notation and terminology in Section~\ref{nota_sec}.
In Section~\ref{DistOrient_subs}, we describe the behavior of
natural orientations on the determinants of real Cauchy-Riemann operators
on real bundle pairs of the form $(L\!\oplus\!\si^*\ov{L},\si_L^{\oplus})$
over symmetric Riemann surfaces~$(\Si,\si)$ under various operations.
We use this to study the behavior of the orientations 
on the determinants of real Cauchy-Riemann operators induced by real orientations
in Section~\ref{InducedOrien_subs}
and the orientations on the moduli space of real maps in 
Sections~\ref{MSorient_subs1} and~\ref{MSorient_subs2},
establishing Propositions~\ref{unionorient_prp}-\ref{RnodisomE_prp} in the process.
In Section~\ref{MSorient0_subs}, 
we specialize to genus~0 and establish Proposition~\ref{RelSpinOrient_prp}.
In Section~\ref{updates_sec}, we extend other results from our past work to the broader setting
of Definition~\ref{realorient_dfn0} and correct some oversights.

\section{Notation and terminology}
\label{nota_sec}

\noindent
For $\ell\!\in\!\Z^{\ge0}$, let $[\ell]\!=\!\{1,2,\ldots,\ell\}$.
We define the \sf{arithmetic genus}~$g$ of a closed, possibly nodal and disconnected, 
Riemann surface~$(\Si,\fj)$ by 
$$g\!-\!1=\sum_{i=1}^m(g_i\!-\!1)$$
if $g_1,\ldots,g_m$ are the arithmetic genera of the topological components of~$\Si$.
A \sf{symmetric Riemann surface} $(\Si,\si,\fj)$  is a closed, possibly nodal and disconnected,
Riemann surface~$(\Si,\fj)$ with an anti-holomorphic involution~$\si$. 
For example, there are two topological types of such involutions on~$\P^1$:
\BE{tauetadfn_e}\tau,\eta\!:\P^1\lra\P^1, \qquad \tau(z)=\frac{1}{\bar{z}}, \quad
\eta(z)=-\frac{1}{\bar{z}}\,.\EE
If $\phi$ is an involution on another space~$X$,
a map $u\!:\!\Si\!\lra\!X$ is \sf{$(\phi,\si)$-real} 
(or \sf{$\phi$-real}, or just \sf{real}) if $u\!\circ\!\si\!=\!\phi\!\circ\!u$.
For $g\!\in\!\Z$ and $\ell\!\in\!\Z^{\ge0}$, we denote by $\R\ov\cM_{g,\ell}^{\bu}$ 
the Deligne-Mumford moduli space of stable closed, possibly nodal and disconnected, 
symmetric Riemann surfaces~$(\Si,\si,\fj)$ of arithmetic genus~$g$
with $\ell$~conjugate pairs $(z_i^+,z_i^-)$ of marked points.
This space is a smooth compact orbifold of dimension $3(g\!-\!1)\!+\!2\ell$;
it is empty if $g\!+\!\ell\!\le\!1$.
Let \hbox{$\R\ov\cM_{g,\ell}\!\subset\!\R\ov\cM_{g,\ell}^{\bu}$} be 
the topological component parametrizing connected marked curves. 
If in addition $(X,\om,\phi)$, $J$, and $B$ are as in Theorem~\ref{main_thm}, we denote~by
$$\ov\fM_{g,\ell}^{\phi}(B;J)\subset\ov\fM_{g,\ell}^{\phi;\bu}(B;J)$$
the subspace of maps from connected domains.\\

\noindent
Let $(\Si,\fJ)$ be a closed, but possibly nodal and disconnected, Riemann surface.
For $k\!\in\!\Z^{\ge0}$, we denote by $k\dbar_{\Si}$ the standard CR-operator 
on the complex vector bundle~$\Si\!\times\!\C^k$ over~$\Si$.
If $V$ is any complex vector bundle over~$\Si$ and
$$D_V\!:  \Ga(\Si;V)\lra
\Ga_{\fJ}^{0,1}(\Si;V)\!\equiv\!\Ga\big(\Si;(T^*\Si,\fJ)^{0,1}\!\otimes_{\C}\!V\big)$$
is a \textsf{generalized Cauchy-Riemann} (or \sf{CR-}) \sf{operator} on~$V$  
as in \cite[Section~2.2]{FanoGV},
the \sf{determinant line}
$$\det D_V\equiv\La_{\R}^{\top}(\ker D_V) \otimes \big(\La^{\top}_{\R}(\text{cok}\,D_V)\big)^*$$
of $D_V$ has a canonical \sf{complex} orientation;
see \cite[Appendix~A.2]{MS} and Proposition~5.9 in the 5th arXiv version of~\cite{detLB}.
The space of generalized CR-operators on a complex vector bundle~$V$ is affine.
By~\cite{detLB}, the determinant lines of these operators form a line bundle
over this space;
its topology depends on the coherent system of determinant line bundles used.\\

\noindent
If $(\Si,\fj)$ is a disjoint union of closed Riemann surfaces~$(\Si_1,\fj_1)$ and~$(\Si_2,\fJ_2)$ 
and $V_i\!=\!V|_{\Si_i}$ for $i\!=\!1,2$, then \hbox{$D_V\!=\!D_{V_1}\!\oplus\!D_{V_2}$}
for some generalized  CR-operators~$D_{V_1}$ and~$D_{V_2}$ on~$V_1$ and~$V_2$, respectively.
By the {\it Direct Sum} property in Section~2.2 of the 5th arXiv version of~\cite{detLB},
this decomposition induces an isomorphism
\BE{Cunionorient_e0}
\det D_V\approx\big(\!\det D_{V_1}\big)\!\otimes\!\big(\!\det D_{V_2}\big)\EE
between the corresponding determinant lines which depends continuously 
on~$D_{V_1}$ and~$D_{V_2}$. 
If instead \hbox{$V\!=\!V_1\!\oplus\!V_2$} for some complex vector bundles~$V_1,V_2$
over~$\Si$ and \hbox{$D_V\!=\!D_{V_1}\!\oplus\!D_{V_2}$}
for some generalized  CR-operators~$D_{V_1}$ and~$D_{V_2}$ on~$V_1$ and~$V_2$, respectively,
we similarly obtain an isomorphism~\eref{Cunionorient_e0} which depends continuously 
on~$D_{V_1}$ and~$D_{V_2}$. 
However, the isomorphism in~\eref{Cunionorient_e0} in either case depends 
on the coherent system of determinant line bundles used,
unless $D_{V_1}$ and~$D_{V_2}$ are either ($\C$-linear) CR-operators or surjective. 
By the continuous dependence, there is a canonical homotopy class 
of isomorphisms~\eref{Cunionorient_e0} in both cases even if \hbox{$D_V\!\neq\!D_{V_1}\!\oplus\!D_{V_2}$}.
Since the isomorphism~\eref{Cunionorient_e0} respects the complex orientations
if \hbox{$D_V\!=\!D_{V_1}\!\oplus\!D_{V_2}$} and $D_{V_1},D_{V_2}$ are CR-operators, 
this is also the case for all generalized CR-operators $D_V,D_{V_1},D_{V_2}$
on $V,V_1,V_2$, respectively.\\

\noindent
Let $(\Si,\si,\fj)$ be a symmetric Riemann surface.
For $k\!\in\!\Z^{\ge0}$, we denote by $k\dbar_{(\Si,\si)}$ the standard CR-operator 
on the real bundle pair $(\Si\!\times\!\C^k,\si\!\times\!\fc)$;
this is the restriction of~$k\dbar_{\Si}$ to the subspace of $\si$-invariant sections
with the range restricted to the subspace of $\si$-invariant $(0,1)$-forms.
If $(V,\vph)$ is a real bundle pair over a symmetric Riemann surface~$(\Si,\si)$ and
\begin{equation*}\begin{split}
D_{(V,\vph)}\!:
\Ga(\Si;V)^{\vph}
\equiv&\big\{\xi\!\in\!\Ga(\Si;V)\!:\,\xi\!\circ\!\si\!=\!\vph\!\circ\!\xi\big\}\\
&\qquad\lra
\Ga_{\fJ}^{0,1}(\Si;V)^{\vph}\equiv
\big\{\ze\!\in\!\Ga_{\fJ}^{0,1}(\Si;V)\!:\,
\ze\!\circ\!\tnd\si=\vph\!\circ\!\ze\big\}
\end{split}\end{equation*}
is a \textsf{real CR-operator} 
on~$(V,\vph)$ as in \cite[Section~2.2]{RealGWsII}, let
$$\det D_{(V,\vph)}\equiv\La_{\R}^{\top}(\ker D_{(V,\vph)}) 
\otimes \big(\La^{\top}_{\R}(\text{cok}\,D_{(V,\vph)})\big)^*$$
be the \sf{determinant} line of~$D_{(V,\vph)}$.
The space of real CR-operators on a real bundle pair~$(V,\vph)$ is affine.
By~\cite{detLB}, the determinant lines of these operators form a line bundle
over this space.
Thus, an orientation on the determinant line of one real CR-operator on~$(V,\vph)$ determines
an orientation of the determinant lines for all other such operators.\\

\noindent
If $(\Si,\si,\fj)$ is a disjoint union of symmetric Riemann surfaces~$(\Si_1,\si_1,\fj_1)$ 
and~$(\Si_2,\si_2,\fJ_2)$ and \hbox{$(V_i,\vph_i)\!=\!(V,\vph)|_{\Si_i}$} 
for $i\!=\!1,2$, then 
\BE{RDVsplit_e} D_{(V,\vph)}=D_{(V_1,\vph_1)}\!\oplus\!D_{(V_2,\vph_2)}\EE
for some real CR-operators~$D_{(V_1,\vph_1)}$ and~$D_{(V_2,\vph_2)}$ on~$(V_1,\vph_1)$ 
and~$(V_2,\vph_2)$, respectively.
By the {\it Direct Sum} property for determinant line bundles, this decomposition induces an isomorphism
\BE{Runionorient_e0}
\det D_{(V,\vph)}\approx\big(\!\det D_{(V_1,\vph_1)}\big)\!\otimes\!
\big(\!\det D_{(V_2,\vph_2)}\big)\EE
between the corresponding determinant lines which depends continuously 
on~$D_{(V_1,\vph_1)}$ and~$D_{(V_2,\vph_2)}$.
If instead 
\BE{RVsplit_e} (V,\vph)=(V_1,\vph_1)\!\oplus\!(V_2,\vph_2)\EE
for some real bundle pairs~$(V_1,\vph_1),(V_2,\vph_2)$ over~$(\Si,\si)$ and 
\eref{RDVsplit_e} holds for some real CR-operators $D_{(V_1,\vph_1)}$ 
and~$D_{(V_2,\vph_2)}$ on~$(V_1,\vph_1)$ and~$(V_2,\vph_2)$, respectively,
we similarly obtain an isomorphism~\eref{Runionorient_e0} which depends continuously 
on~$D_{(V_1,\vph_1)}$ and~$D_{(V_2,\vph_2)}$. 
There is again a canonical homotopy class of isomorphisms~\eref{Runionorient_e0} in both cases 
even if~\eref{RDVsplit_e} does not hold. 

\section{Comparisons of orientations}
\label{orient_sec}

\subsection{Distinguished orientations}
\label{DistOrient_subs}

\noindent
Suppose $(\Si,\si,\fJ)$ is a symmetric Riemann surface of arithmetic genus~$g$
and $L$ is a complex vector bundle  over~$\Si$.
If $D_L$ is a generalized CR-operator on~$L$, then
\begin{gather}
\label{DovLdfn_e}
D_{\si^*\ov{L}}\!:\Ga(\Si;\si^*\ov{L})\lra\Ga_{\fJ}^{0,1}\big(\Si;\si^*\ov{L}\big),
\quad D_{\si^*\ov{L}}(\xi)=D\xi\!\circ\!\nd\si, \quad\hbox{and}\\
\label{DLovLdfn_e}
D_{(L\oplus\si^*\ov{L},\si_L^{\oplus})}\!\equiv\!D_L\!\oplus\!D_{\si^*\ov{L}}\!:
\Ga\big(\Si;L\!\oplus\!\si^*\ov{L}\big)^{\si_L^{\oplus}}\lra
\Ga_{\fJ}^{0,1}\big(\Si;L\!\oplus\!\si^*\ov{L}\big)^{\si_L^{\oplus}}
\end{gather}
are a generalized CR-operator on~$\si^*\ov{L}$ 
and a real CR-operator on $(L\!\oplus\!\si^*\ov{L},\si_L^{\oplus})$, respectively.
The homomorphisms
$$\Ga(\Si;L)\lra\Ga(\Si;\si^*\ov{L}),~~\xi\lra\xi\!\circ\!\si, \quad
\Ga_{\fJ}^{0,1}\big(\Si;L)\lra\Ga_{\fJ}^{0,1}(\Si;\si^*\ov{L}),
~~\ze\lra\ze\!\circ\!\nd\si,$$
are isomorphisms and determine an isomorphism
\BE{ConjCorient_e}\det D_L\approx \det D_{\si^*\ov{L}}\EE
which depends continuously on~$D_L$.
By the continuous dependence, there is a canonical homotopy class 
of isomorphisms~\eref{ConjCorient_e} even if~$D_L$ and~$D_{\si^*\ov{L}}$
are not related by~\eref{DovLdfn_e}.
By the stabilization to the surjective case as in the proof of Lemma~\ref{CvsCanorient_lmm} below, 
this homotopy class respects the complex orientations of the two determinant lines 
if and only~if 
\BE{ConjCorient_e2}(1\!-\!g)\rk_{\C}L\!+\!\deg L\in2\Z\,,\EE
as is immediate if $D_L$ is a ($\C$-linear) CR-operator;
the expression on the left-hand side above is the complex index
of a ($\C$-linear) CR-operator on~$L$ or~$\si^*\ov{L}$.\\

\noindent
Furthermore, the projections
\BE{LovLtoLdfn_e}\begin{aligned}
\Ga\big(\Si;L\!\oplus\!\si^*\ov{L}\big)^{\si_L^{\oplus}}&\lra\Ga(\Si;L),
&\quad(\xi,\xi')&\lra\xi,\\
\Ga_{\fJ}^{0,1}\big(\Si;L\!\oplus\!\si^*\ov{L}\big)^{\si_L^{\oplus}}
&\lra \Ga_{\fJ}^{0,1}(\Si;L), &\quad (\ze,\ze')&\lra\ze,
\end{aligned}\EE
are isomorphisms and determine an isomorphism
\BE{COrientIsom_e} \det D_{(L\oplus\si^*\ov{L},\si_L^{\oplus})}\approx\det D_L\EE
which depends continuously on~$D_L$.
There is a canonical homotopy class of isomorphisms~\eref{COrientIsom_e} even 
if~$D_L$ and~$D_{(L\oplus\si^*\ov{L},\si_L^{\oplus})}$
are not related by~\eref{DLovLdfn_e}.
Via this homotopy class, the complex orientation on $\det D_L$ induces 
an orientation on $\det D_{(L\oplus\si^*\ov{L},\si_L^{\oplus})}$,
which we call the \sf{projection orientation} of the latter;
it varies continuously with~$D_{(L\oplus\si^*\ov{L},\si_L^{\oplus})}$.\\

\noindent
For any real bundle pair~$(L,\wt\si)$ over~$(\Si,\si,\fJ)$,
\eref{Runionorient_e0} implies that the real~line
$$\det D_{2(L,\wt\si)}\approx
\big(\!\det D_{(L,\wt\si)}\big)\!\otimes\!\big(\!\det D_{(L,\wt\si)}\big)$$
has a \sf{canonical orientation};
it is obtained by taking the same orientation on each of the two factors
on the right-hand side above.

\begin{lmm}\label{CvsCanorient_lmm}
Let $(L,\wt\si)$ be a real bundle pair over a symmetric Riemann surface~$(\Si,\si,\fJ)$
of arithmetic genus~$g$.
The homotopy class of isomorphisms
$$\det D_{2(L,\wt\si)}\approx \det D_{(L\oplus\si^*\ov{L},\si_L^{\oplus})}$$
induced by the isomorphism~$\Phi_L$ of real bundle pairs  as in~\eref{GIgen_e}
respects the canonical orientation on the left-hand side and
projection orientation on the right-hand side if and only~if 
$$ \frac{\big(\!(1\!-\!g)\rk_{\C}L\!+\!\deg L\big)\!\big(\!(1\!-\!g)\rk_{\C}L\!+\!\deg L\!-\!1\big)}2
 \in2\Z.$$
\end{lmm}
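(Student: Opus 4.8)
The plan is to reduce the statement to a single convenient CR-operator and then to the elementary orientation behavior of a ``complexification'' map. First I would use that the canonical orientation on $\det D_{2(L,\wt\si)}$, the projection orientation on $\det D_{(L\oplus\si^*\ov L,\si_L^\oplus)}$, and the isomorphism of determinant lines induced by~$\Phi_L$ all vary continuously with the real CR-operator, while the space of real CR-operators on~$2(L,\wt\si)$ is affine and hence connected; so whether the $\Phi_L$-induced homotopy class intertwines the two orientations is independent of the operator chosen, and it suffices to verify it for one. I would pick a holomorphic structure on~$L$ for which~$\wt\si$ is anti-holomorphic (such structures exist and form a connected family) and take the associated $\C$-linear real CR-operator $\dbar_{(L,\wt\si)}$ on~$(L,\wt\si)$, so that $D_{2(L,\wt\si)}\!=\!\dbar_{(L,\wt\si)}\!\oplus\!\dbar_{(L,\wt\si)}$. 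For such a structure $\Phi_L$ is a \emph{holomorphic} isomorphism --- a direct check from~\eref{DovLdfn_e} and the anti-holomorphicity of~$\wt\si$ --- so $\Phi_L$ intertwines $\dbar_{(L,\wt\si)}\!\oplus\!\dbar_{(L,\wt\si)}$ with $\dbar_L\!\oplus\!\dbar_{\si^*\ov L}$. Writing $a\!=\!h^0(\Si,L)$ and $b\!=\!h^1(\Si,L)$, so that $a\!-\!b\!=\!(1\!-\!g)\rk_\C L\!+\!\deg L\!=:\!m$ is the complex index, the kernels and cokernels of $\dbar_{(L,\wt\si)}\!\oplus\!\dbar_{(L,\wt\si)}$ are the twofold direct sums of the real forms $H^0(\Si,L)^{\wt\si}$ and $H^1(\Si,L)^{\wt\si}$, while those of $\dbar_L\!\oplus\!\dbar_{\si^*\ov L}$ are, via the projections~\eref{LovLtoLdfn_e}, the full complex spaces $H^0(\Si,L)$ and $H^1(\Si,L)$.

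The key input is an elementary fact about orientations: if~$W$ is a complex vector space of dimension~$d$ with real form~$W_\R$, the real-linear ``complexification'' isomorphism $W_\R\!\oplus\!W_\R\!\to\!W$, $(v,w)\!\mapsto\!v\!+\!\fI w$, carries the ``diagonal'' orientation $o\!\wedge\!o$ (for any orientation~$o$ of~$\La_\R^{\top}W_\R$; the result is independent of~$o$) to $(-1)^{\binom d2}$ times the complex orientation of~$W$. This is a transposition count: moving each of the $d$ vectors $\fI e_j$ from the second summand into place beside~$e_j$ costs $d\!-\!j$ transpositions, for a total of $\sum_{j=1}^d(d\!-\!j)\!=\!\binom d2$. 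Now, composing the determinant-line isomorphism induced by~$\Phi_L$ with the projection isomorphisms~\eref{LovLtoLdfn_e} realizes, on kernels, the complexification map with $W\!=\!H^0(\Si,L)$ (so $d\!=\!a$), and on cokernels the complexification map with $W\!=\!H^1(\Si,L)$ (so $d\!=\!b$); recall that the projection orientation on $\det D_{(L\oplus\si^*\ov L,\si_L^\oplus)}$ is, by definition via~\eref{COrientIsom_e}, the complex orientation of $\det\dbar_L$.

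Assembling the pieces, the sign by which the $\Phi_L$-induced isomorphism compares the canonical and projection orientations is $(-1)^{\binom a2+\binom b2+\ka}$, where $(-1)^{\ka}$ is the Koszul sign built into the Direct Sum property~\eref{Runionorient_e0} for $\dbar_{(L,\wt\si)}\!\oplus\!\dbar_{(L,\wt\si)}$: it enters because the canonical orientation is defined by placing $o\!\otimes\!o$ on $\det\dbar_{(L,\wt\si)}\!\otimes\!\det\dbar_{(L,\wt\si)}$, and regrouping the four tensor factors $\La_\R^a,(\La_\R^b)^{*},\La_\R^a,(\La_\R^b)^{*}$ into the kernel and cokernel parts of $\det(\dbar_{(L,\wt\si)}\!\oplus\!\dbar_{(L,\wt\si)})$ moves a degree-$b$ factor past a degree-$a$ factor and two dual factors past one another, so that $\ka\!\equiv\!ab\!+\!b\pmod2$. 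Together with the elementary congruence $\binom{a-b}2\!\equiv\!\binom a2\!+\!\binom b2\!+\!ab\!+\!b\pmod2$ and $m\!=\!a\!-\!b$, this gives that the comparison sign is $(-1)^{\binom m2}$, so the $\Phi_L$-induced homotopy class respects the canonical and projection orientations if and only if $\binom m2\!\in\!2\Z$, which is the assertion. (In the sub-case of a surjective operator, to which one may also reduce by a stabilization compatible with the $m$-dependence of the conclusion via $\binom{m+m'}2\!\equiv\!\binom m2\!+\!\binom{m'}2\!+\!mm'\pmod2$, the cokernels vanish, $b\!=\!0$, $\ka\!=\!0$, and the comparison sign is immediately $(-1)^{\binom m2}$ with $a\!=\!m$.)

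The conceptual content is the single observation that complexification reverses orientation by $(-1)^{\binom d2}$; the rest is bookkeeping, and I expect the main obstacle to be exactly that bookkeeping --- pinning down the Koszul sign of the Direct Sum property of the coherent determinant line bundles of~\cite{detLB} (together with the associated dual-tensor conventions) precisely enough that the kernel, cokernel and Direct Sum contributions combine to $\binom m2$ rather than to a value off by a term linear in~$m$. This is why passing to the surjective case, where the cokernel and Koszul contributions simply disappear and the relevant isomorphisms are canonical, is the cleanest route. A secondary point to confirm is that $\Phi_L$ is indeed holomorphic for a $\wt\si$-compatible holomorphic structure on~$L$, so that both sides of~\eref{GIgen_e} are simultaneously described by split $\C$-linear CR-operators.
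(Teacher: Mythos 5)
Your proposal is correct and its essential mechanism is the same as the paper's. The paper proves the lemma in exactly the two steps you flag as the ``cleanest route'': it first establishes the claim when $D_L$ is surjective, by comparing the bases $(\xi_1,0),\ldots,(\xi_k,0),(0,\xi_1),\ldots,(0,\xi_k)$ and $\Phi_L(\xi_1,0),\Phi_L(0,\xi_1),\ldots,\Phi_L(\xi_k,0),\Phi_L(0,\xi_k)$ of the two kernels --- a transposition count that is precisely your complexification observation giving $(-1)^{\binom k2}$ --- and then reduces the general case to the surjective one by a direct-sum/subtraction argument using the Exact Squares property, which works because $\binom{m+m'}{2}\equiv\binom m2+\binom{m'}{2}+mm'\pmod2$, the identity you quote. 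Your main body instead attempts to treat the non-surjective case head on by tracking the cokernel factors too, combining $(-1)^{\binom a2}$, $(-1)^{\binom b2}$, and a Koszul sign $\ka\equiv ab+b$ from the Direct Sum property; you correctly sense that this last sign is the fragile ingredient. It is indeed convention-dependent: the paper itself notes that the isomorphism $\det D\approx(\det D_1)\otimes(\det D_2)$ depends on the choice of coherent system of determinant line bundles unless the operators are $\C$-linear or surjective, which is exactly why the paper passes to the surjective case rather than chasing $\ka$ directly. Since your parenthetical reproduces the paper's route and your answer is self-consistent with it, the proposal is sound, but the direct non-surjective computation should be regarded as a plausibility check rather than an independent proof until the Koszul and dualization conventions of~\cite{detLB} are pinned down precisely enough to justify $\ka\equiv ab+b$.
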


\begin{proof}
By the continuity of the canonical and projection orientations,
it is sufficient to establish the claim for any pair of real CR-operators on~$2(L,\wt\si)$ 
and~$(L\oplus\si^*\ov{L},\si_L^{\oplus})$.
We assume that~$D_{2(L,\wt\si)}$ is two copies of
the restriction~$D_{(L,\wt\si)}$ of a CR-operator~$D_L$ on~$L$
to the subspaces of the domain and target invariant under the conjugation induced by~$\wt\si$
and~$D_{(L\oplus\si^*\ov{L},\si_L^{\oplus})}$ is given by~\eref{DLovLdfn_e}.\\

\noindent
Suppose first that the CR-operator~$D_L$ on~$L$ is surjective and
thus
$$k\equiv\dim\ker D_{(L,\wt\si)}
=\dim_{\C}\ker D_L=\ind_{\C}D_L=(1\!-\!g)\rk_{\C}L\!+\!\deg L\,.$$
If $\xi_1,\ldots,\xi_k$ is a basis for $\ker D_{(L,\wt\si)}$,
then 
$$(\xi_1,0),\ldots,(\xi_k,0),(0,\xi_1),\ldots,(0,\xi_k)
\quad\hbox{and}\quad
\Phi_L(\xi_1,0),\Phi_L(0,\xi_1),\ldots,\Phi_L(\xi_k,0),\Phi_L(0,\xi_k)$$
are a basis for $\ker D_{2(L,\wt\si)}$ determining the canonical orientation
and a basis for $\ker D_{(L\oplus\si^*\ov{L},\si_L^{\oplus})}$ determining 
the projection orientation.
This establishes the claim if~$D_L$ is surjective.\\

\noindent
If $(L,\wt\si)$ is the direct sum of real bundle pairs $(L_1,\wt\si_1)$ and $(L_2,\wt\si_2)$
over~$(\Si,\si)$ and \hbox{$D_L\!=\!D_{L_1}\!\oplus\!D_{L_2}$},
the isomorphisms $\Phi_L,\Phi_{L_1},\Phi_{L_2}$ as in~\eref{GIgen_e} induce an isomorphism
\BE{CvsCanorient_e5}\begin{split}
\xymatrix{0\ar[r]& D_{2(L_1,\wt\si_1)} \ar[r]\ar[d]^{\approx}& 
D_{2(L,\wt\si)}\ar[r]\ar[d]^{\approx}& D_{2(L_2,\wt\si_2)}\ar[r]\ar[d]^{\approx}& 0\\
0\ar[r]& D_{(L_1\oplus\si^*\ov{L_1},\si_{L_1}^{\oplus})}  \ar[r]& 
D_{(L\oplus\si^*\ov{L},\si_L^{\oplus})}\ar[r]& 
D_{(L_2\oplus\si^*\ov{L_2},\si_{L_2}^{\oplus})} \ar[r]& 0}
\end{split}\EE
of exact triples of real CR-operators.
Since direct sums of complex vector bundles commute with the projections as in~\eref{LovLtoLdfn_e}
and the corresponding isomorphisms as in~\eref{Cunionorient_e0} respect the complex orientations,
the isomorphism
\BE{unionorient_e0a}
\det D_{(L\oplus\si^*\ov{L},\si_L^{\oplus})}\approx
\big(\!\det D_{(L_1\oplus\si^*\ov{L_1},\si_{L_1}^{\oplus})}\big)\!\otimes\!
\big(\!\det D_{(L_2\oplus\si^*\ov{L_2},\si_{L_2}^{\oplus})}\big)\EE
induced by the bottom row in~\eref{CvsCanorient_e5} respects the projection orientations.
By the {\it Exact Squares} property of determinant line bundles as in Section~2.2 in 
the 5th arXiv version of~\cite{detLB},
the isomorphism
\BE{unionorient_e0b}
\det D_{2(L,\wt\si)}\approx
\big(\!\det D_{2(L_1,\wt\si_1)}\big)\!\otimes\!\big(\!\det D_{2(L_2,\wt\si_2)}\big)\EE
induced by the top row in~\eref{CvsCanorient_e5} respects the canonical orientations
if and only~if
$$\big(\ind\,D_{(L_1,\wt\si_1)}\big)\big(\ind\,D_{(L_2,\wt\si_2)}\big)\in2\Z.$$
Since $\ind\,D_{(L,\wt\si)}\!=\!\ind\,D_{(L_1,\wt\si_1)}\!+\!\ind\,D_{(L_2,\wt\si_2)}$,
the claim holds for $(L_1,\wt\si_1)$ if it holds for $(L,\wt\si)$ and~$(L_2,\wt\si_2)$.\\

\noindent
For any real bundle pair $(L_1,\wt\si_1)$ on $(\Si,\si)$, 
we can find a real bundle pair~$(L_2,\wt\si_2)$ on $(\Si,\si)$
so that the complex vector bundles~$L_2$ and $L\!\equiv\!L_1\!\oplus\!L_2$
admit surjective CR-operators.
The claim of this lemma then holds for the real bundle pairs~$(L_2,\wt\si_2)$
and $(L,\wt\si_1\!\oplus\!\wt\si_2)$ and thus for~$(L_1,\wt\si_1)$.
\end{proof}

\begin{lmm}\label{unionorient_lmm}
Suppose $(\Si_1,\si_1,\fJ_1)$ and $(\Si_2,\si_2,\fJ_2)$ are symmetric Riemann surfaces
of arithmetic genera~$g_1$ and~$g_2$, respectively,
$(\Si,\si,\fJ)$ is the disjoint union of~$(\Si_1,\si_1,\fJ_1)$ and~$(\Si_2,\si_2,\fJ_2)$,
and $L$ is a complex vector bundle over~$\Si$.
If $L_i\!=\!L|_{\Si_i}$ for $i\!=\!1,2$, then 
the associated homotopy class of isomorphisms~\eref{unionorient_e0a}
respects the projection orientations on the three determinant lines.
If in addition $\wt\si$ is a conjugation on~$L$ and \hbox{$\wt\si_i\!=\!\wt\si|_{L_i}$} 
for $i\!=\!1,2$, then the associated homotopy class of isomorphisms~\eref{unionorient_e0b}
respects the canonical orientations on the three determinant lines if and only~if
$$\big(\!(1\!-\!g_1)\rk_{\C}L\!+\!\deg L_1\big)\!
\big(\!(1\!-\!g_2)\rk_{\C}L\!+\!\deg L_2\big) \in2\Z.$$
\end{lmm}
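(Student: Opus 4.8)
The plan is to decompose the disjoint-union isomorphisms into the separate statements about the two summands $L$ and $\si^*\ov L$ (for the projection orientation) and about the two copies of $(L,\wt\si)$ (for the canonical orientation), and to reduce everything to the surjective case by stabilization, exactly as in the proof of Lemma~\ref{CvsCanorient_lmm}. First I would treat the projection-orientation claim. By the continuity of the projection orientations it suffices to verify it for one convenient choice of real CR-operator on $(L\!\oplus\!\si^*\ov L,\si_L^{\oplus})$ over~$(\Si,\si)$, and by~\eref{COrientIsom_e} the projection orientation on the determinant of $D_{(L\oplus\si^*\ov L,\si_L^{\oplus})}$ is, by definition, transported from the complex orientation on $\det D_L$ along the projection to the first factor. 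The disjoint-union isomorphism~\eref{unionorient_e0a} is then identified, via the three copies of~\eref{COrientIsom_e}, with the complex disjoint-union isomorphism~\eref{Cunionorient_e0} for the generalized CR-operator $D_L=D_{L_1}\!\oplus\!D_{L_2}$; since the projections in~\eref{LovLtoLdfn_e} commute with restriction to~$\Si_i$ and~\eref{Cunionorient_e0} respects the complex orientations, \eref{unionorient_e0a} respects the projection orientations unconditionally. This is the easy half and produces no index constraint.

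For the canonical-orientation claim I would again reduce to the surjective case: choose the real CR-operator on~$(L,\wt\si)$ to be the restriction $D_{(L,\wt\si)}$ of a $\C$-linear CR-operator $D_L$ on~$L$ that is surjective on each component~$\Si_i$ (stabilizing $L$ by adding a real bundle pair with many sections if necessary, and using the reduction argument at the end of the proof of Lemma~\ref{CvsCanorient_lmm} to pass back to general~$(L,\wt\si)$). Then $\ker D_{(L,\wt\si)}=\ker D_{(L_1,\wt\si_1)}\oplus\ker D_{(L_2,\wt\si_2)}$, the cokernels vanish, and one has explicit real bases. Writing $a_i\!=\!\dim_{\R}\ker D_{(L_i,\wt\si_i)}\!=\!\ind D_{(L_i,\wt\si_i)}\!=\!(1\!-\!g_i)\rk_{\C}L\!+\!\deg L_i$ (the complex index of a $\C$-linear CR-operator on $L_i$, which equals the real index of its $\wt\si_i$-invariant part), the canonical orientation of $\det D_{2(L_i,\wt\si_i)}$ is given by $\xi_1^{(i)},\dots,\xi_{a_i}^{(i)},\xi_1^{(i)},\dots,\xi_{a_i}^{(i)}$ in the notation of the preceding proof. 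The disjoint-union isomorphism~\eref{unionorient_e0b} reorders the concatenated tuple for the summands into the tuple $(\text{copy 1 of }L_1),(\text{copy 1 of }L_2),(\text{copy 2 of }L_1),(\text{copy 2 of }L_2)$ for~$L$; the sign of this permutation is $(-1)^{a_1 a_2}$, since it moves the $a_2$-block of the first copy of $L_2$ past the $a_1$-block of the second copy of~$L_1$. Hence~\eref{unionorient_e0b} respects the canonical orientations iff $a_1 a_2\in2\Z$, which is the asserted condition.

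The main obstacle — such as it is — is bookkeeping the reductions rather than any genuine difficulty: one must make sure that the stabilization that forces surjectivity is compatible with taking disjoint unions (it is, since one stabilizes componentwise), and that the reduction from general $(L,\wt\si)$ to the surjective case, carried out via exact triples and the {\it Exact Squares} property of~\cite{detLB} as in Lemma~\ref{CvsCanorient_lmm}, does not introduce an extra index-parity correction that interacts with the permutation sign above. One checks that the correction terms coming from the two applications of {\it Exact Squares} (one for $(\Si_1,\si_1)$, one for the stabilizing summand over each component) cancel against each other, because $\ind D_{(L,\wt\si)}=\ind D_{(L_1,\wt\si_1)}+\ind D_{(L_2,\wt\si_2)}$ and the parity of $a_1a_2$ is unchanged when $a_i$ is replaced by $a_i$ plus an even number. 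With those compatibilities in hand, the surjective-case computation of the permutation sign $(-1)^{a_1a_2}$ is the whole content, and I would present it essentially as above.
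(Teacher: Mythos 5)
Your proof is correct and follows essentially the paper's route: the first claim reduces to the complex disjoint-union isomorphism~\eref{Cunionorient_e0} respecting complex orientations, and the second is governed by the reordering sign $(-1)$ to the power $\big(\ind D_{(L_1,\wt\si_1)}\big)\big(\ind D_{(L_2,\wt\si_2)}\big)$. The paper obtains that sign directly from the Exact Squares property of determinant line bundles (or, alternatively, from the first claim together with Lemma~\ref{CvsCanorient_lmm}), which holds for arbitrary Fredholm operators and thereby makes your stabilization bookkeeping unnecessary.
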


\begin{proof}
The first claim follows from the homotopy class of isomorphisms~\eref{Cunionorient_e0}
with $V\!=\!L$ respecting the complex orientation.
The second claim follows from the {\it Exact Squares} property of determinant line bundles
as in Section~2.2  in the 5th arXiv version of~\cite{detLB}, since
$$\ind\, D_{(V_i,\wt\si_i)}=(1\!-\!g_i)\rk_{\C}L\!+\!\deg L_i$$
for $i\!=\!1,2$.
This claim also follows from the first claim and Lemma~\ref{CvsCanorient_lmm}.
\end{proof}

\noindent
Suppose $\si$ is an anti-holomorphic diffeomorphism between 
closed Riemann surfaces~$(\Si_1,\fj_1)$ and $(\Si_2,\fJ_2)$ of arithmetic genus~$g$ each, so~that 
\BE{Dbldfn_e}(\Si,\si,\fj)\equiv\big(\Si_1\!\sqcup\!\Si_2,\si,\fj_1\!\sqcup\!\fj_2\big)\EE
is a symmetric Riemann surface of arithmetic genus~$2g\!-\!1$,
and $(V,\vph)$ is a real bundle pair over~$(\Si,\si)$.
Let $V_i\!=\!V|_{\Si_i}$ for $i\!=\!1,2$.
The restrictions
\BE{Dblorient_e0}\Ga(\Si;V\big)^{\vph}\lra\Ga(\Si_1;V_1),~~\xi\lra\xi|_{\Si_1}, \quad
\Ga_{\fJ}^{0,1}(\Si;V)^{\vph}\lra\Ga_{\fJ}^{0,1}(\Si_1;V_1),~~\ze\lra\ze|_{\Si_1},\EE
then induce isomorphisms between real CR-operators on~$(V,\vph)$
and generalized CR-operators on~$V_1$ and thus an isomorphism
\BE{Dblorient_e} \det D_{(V,\vph)}\approx\det\!\big(D_{(V,\vph)}|_{\Si_1}\big)\EE
which depends continuously on~$D_{(V,\vph)}$.
The $\rk_{\C}L\!=\!1$ case of the first claim below is \cite[Lemma~3.2]{GI}.

\begin{lmm}\label{Dblorient_lmm}
Suppose $(\Si_1,\fj_1)$, $(\Si_2,\fJ_2)$, $g$, and $(\Si,\si,\fj)$ are as above 
and $L$ is a complex line bundle over~$\Si$.
Let $\si_i\!=\!\si|_{\Si_i}$ and $L_i\!=\!L|_{\Si_i}$ for $i\!=\!1,2$.
The homotopy class of isomorphisms
\BE{Dblorient_e1b}
\det D_{(L\oplus\si^*\ov{L},\si_L^{\oplus})}\approx
\det D_{L_1\oplus\si_1^*\ov{L_2}}\EE
as in~\eref{Dblorient_e}
respects the projection orientation on the left-hand side and 
the complex orientation on the right-hand side if and only~if
$$(1\!-\!g)\rk_{\C}L\!+\!\deg L_2\in2\Z\,.$$
If in addition $\wt\si$ is a conjugation on~$L$ lifting~$\si$, 
then the homotopy class of isomorphisms
\BE{Dblorient_e2b}
\det D_{2(L,\wt\si)}\approx \det D_{2L_1}\EE
as in~\eref{Dblorient_e} respects the canonical orientation on the left-hand side and 
the complex orientation on the right-hand side.
\end{lmm}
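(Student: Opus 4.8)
The plan is to reduce everything to the two situations already handled: the surjective‐operator case, which can be checked on explicit bases, and the direct‐sum/``exact squares'' case, which propagates the statement through extensions. For the first claim of Lemma~\ref{Dblorient_lmm}, I would proceed exactly as in the proof of Lemma~\ref{CvsCanorient_lmm}. By the continuous dependence of both the projection orientation on $\det D_{(L\oplus\si^*\ov{L},\si_L^{\oplus})}$ and the complex orientation on $\det D_{L_1\oplus\si_1^*\ov{L_2}}$, it suffices to verify the sign for one convenient pair of operators. First I would take $D_{(L\oplus\si^*\ov{L},\si_L^{\oplus})}$ of the form~\eref{DLovLdfn_e}, so that it is built from a single generalized CR-operator $D_L$ on $L$, and note that under the restriction isomorphisms~\eref{Dblorient_e0} it corresponds to $D_{L_1}\!\oplus\!D_{\si_1^*\ov{L_2}}$ on $L_1\!\oplus\!\si_1^*\ov{L_2}$ (using that $\si$ interchanges $\Si_1$ and $\Si_2$, so the restriction to $\Si_1$ of the $\si^*\ov{L}$ summand is $\si_1^*\ov{L_2}$). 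The projection orientation on the left is, by definition, the image of the complex orientation of $\det D_{L_1}$ under~\eref{COrientIsom_e}. Thus the question becomes whether the composite isomorphism $\det D_{L_1}\approx\det D_{L_1\oplus\si_1^*\ov{L_2}}$ (projection on $L_1$, then direct sum with $\si_1^*\ov{L_2}$) carries the complex orientation of $\det D_{L_1}$ to the complex orientation of $\det D_{L_1\oplus\si_1^*\ov{L_2}}$. That composite respects complex orientations precisely when $\det D_{\si_1^*\ov{L_2}}$ can be complex-oriented compatibly — i.e.\ when the discrepancy between the complex orientation of $\det(D_{L_1}\!\oplus\!D_{\si_1^*\ov{L_2}})$ under~\eref{Cunionorient_e0} and the tensor of complex orientations is trivial. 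By the stabilization-to-surjective argument, the relevant parity is the complex index of $\si_1^*\ov{L_2}$ as a bundle over $(\Si_1,\fj_1)$, which is $(1\!-\!g)\rk_{\C}(\si_1^*\ov{L_2})+\deg(\si_1^*\ov{L_2}) = (1\!-\!g)\rk_{\C}L+\deg L_2$ (conjugation reverses degree, but $\si_1$ is an anti-holomorphic diffeomorphism, so the two sign reversals cancel and the degree of $\si_1^*\ov{L_2}$ over $(\Si_1,\fj_1)$ equals $\deg L_2$ over $(\Si_2,\fj_2)$; I would spell this bookkeeping out carefully). This gives exactly the stated congruence $(1\!-\!g)\rk_{\C}L+\deg L_2\in2\Z$.

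For the second claim, I would again reduce to the surjective case and then to extensions. Taking $D_{2(L,\wt\si)}$ to be two copies of $D_{(L,\wt\si)}$, the restriction isomorphism~\eref{Dblorient_e} applied to each copy identifies $\det D_{2(L,\wt\si)}$ with $\det D_{2L_1}$, and the canonical orientation on the source is the one agreeing on both factors, which maps under $(\text{\ref{Dblorient_e}})\otimes(\text{\ref{Dblorient_e}})$ to the orientation on $\det D_{2L_1}\approx(\det D_{L_1})\otimes(\det D_{L_1})$ agreeing on both factors; that is precisely the complex orientation of $\det D_{2L_1}$, since for a direct sum of two copies of the same complex operator the complex orientation is the ``same on both factors'' orientation (no sign, as the tensor-square of a line is canonically oriented). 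So in the surjective case the claim holds with no parity condition. To pass to general $(L,\wt\si)$, I would run the exact-triple argument of Lemma~\ref{CvsCanorient_lmm}: for $(L,\wt\si)=(L_1,\wt\si_1)\!\oplus\!(L_2,\wt\si_2)$ with $D_L=D_{L_1}\!\oplus\!D_{L_2}$, the restriction isomorphisms intertwine the exact triples for $2(L,\wt\si)$ and for $2L_1$ (over $\Si_1$) compatibly, and the Exact Squares property shows the ``splitting'' isomorphisms on the two sides respect, respectively, the canonical and complex orientations up to the same sign $(-1)^{(\ind D_{(L_1,\wt\si_1)})(\ind D_{(L_2,\wt\si_2)})}$ on the left and $(-1)^{(\ind_{\C}D_{L_1})(\ind_{\C}D_{L_2})}$ on the right. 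Since $\ind D_{(L_i,\wt\si_i)}=\ind_{\C}D_{L_i}=(1\!-\!g)\rk_{\C}L_i+\deg L_i$ for the double $(\Si,\si)$, these two signs coincide, so the discrepancy cancels in the comparison and the claim for $(L,\wt\si)$ follows from the claim for $(L_1,\wt\si_1)$ and $(L_2,\wt\si_2)$. Finally, any $(L_1,\wt\si_1)$ can be completed to $(L,\wt\si)$ with $L$ and a complementary summand both admitting surjective CR-operators, so the surjective case plus this propagation finishes the proof.

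The main obstacle I anticipate is not any single hard estimate but the orientation bookkeeping in the first claim: keeping straight the difference between ``$\deg L_2$ computed over $(\Si_2,\fj_2)$'' and ``$\deg(\si_1^*\ov{L_2})$ computed over $(\Si_1,\fj_1)$,'' checking that the complex index appearing in~\eref{ConjCorient_e2} is the one that governs the projection-vs-complex discrepancy here, and confirming that the two restriction isomorphisms~\eref{Dblorient_e0} for the two summands $L$ and $\si^*\ov{L}$ are genuinely compatible with the projections~\eref{LovLtoLdfn_e} so that the diagram of isomorphisms commutes up to homotopy. I would organize this by first writing out the commuting square relating $\det D_{(L\oplus\si^*\ov{L},\si_L^{\oplus})}$, $\det D_L$, $\det D_{L_1\oplus\si_1^*\ov{L_2}}$, and $\det D_{L_1}$, reducing the whole statement to the single known fact~\eref{ConjCorient_e2}–\eref{Cunionorient_e0} about when adding a $\si_1^*\ov{L_2}$ summand preserves complex orientations, and only then invoking the index computation to read off the parity. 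The second claim is essentially formal once the first is in place, since tensor-squares carry no sign.
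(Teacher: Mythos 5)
Your overall plan matches the paper's: reduce to a single pair of operators by continuous dependence, observe that the restriction to $\Si_1$ turns the real operator into $D_{L_1}\oplus D_{\si_1^*\ov{L_2}}$, and trace the orientation discrepancy to the conjugation isomorphism $\det D_{L_2}\approx\det D_{\si_1^*\ov{L_2}}$, whose parity is governed by the complex index. You also get the degree bookkeeping right ($\deg\si_1^*\ov{L_2}=\deg L_2$ because the two sign reversals cancel). However, the intermediate steps in your first‐claim argument are confused and would not survive being written out. The projection orientation is \emph{not} the image of the complex orientation of $\det D_{L_1}$; by definition it is the image of the complex orientation of $\det D_L$ over all of $\Si$, which in the doublet case is $\det D_{L_1}\otimes\det D_{L_2}$. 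There is no ``composite isomorphism $\det D_{L_1}\approx\det D_{L_1\oplus\si_1^*\ov{L_2}}$'' — those lines are not isomorphic; the projection $L_1\oplus\si_1^*\ov{L_2}\to L_1$ yields an exact triple, not an isomorphism of determinant lines. And the direct-sum isomorphism~\eref{Cunionorient_e0} always respects complex orientations (as stated explicitly at the end of Section~\ref{nota_sec}), so it is not the source of any parity. The argument that actually works, and that the paper gives, is: take $D_L=D_{L_1}\oplus D_{L_2}$; the projection isomorphism~\eref{COrientIsom_e} identifies $\det D_{(L\oplus\si^*\ov{L},\si_L^\oplus)}$ with $\det D_{L_1}\otimes\det D_{L_2}$, the restriction~\eref{Dblorient_e} identifies it with $\det D_{L_1}\otimes\det D_{\si_1^*\ov{L_2}}$, these agree on the first factor and differ on the second by the conjugation isomorphism~\eref{ConjCorient_e} with $L$ replaced by $L_2$, and the parity is then exactly~\eref{ConjCorient_e2} for $L_2$. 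For the second claim, your base-case observation is correct, but the subsequent exact-triple reduction is unnecessary: since the restriction~\eref{Dblorient_e0} commutes with the decomposition $D_{2(L,\wt\si)}=D_{(L,\wt\si)}\oplus D_{(L,\wt\si)}$, the tensor-square argument already applies to an arbitrary $(L,\wt\si)$, with no parity to cancel; this one-line observation is the entirety of the paper's proof of the second claim.
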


\begin{proof}
Since the restrictions to $\Si_1$ as in~\eref{Dblorient_e0} commute with direct sums, 
the second claim follows immediately from the definition of the canonical orientation
on the left-hand side of~\eref{Dblorient_e2b}.
The first claim follows from the sentence containing~\eref{ConjCorient_e2} with~$L$ replaced by~$L_2$, 
since \hbox{$D_L\!=\!D_{L_1}\!\oplus\!D_{L_2}$} for some generalized CR-operators
$D_{L_1},D_{L_2}$ on $L_1,L_2$, respectively.
If $L$ admits a conjugation~$\wt\si$ lifting~$\si$, 
then this claim also follows from the second claim and Lemma~\ref{CvsCanorient_lmm}.
\end{proof}

\noindent
Suppose $(\Si,\si,\fj)$ is a symmetric Riemann surface with a conjugate pair 
of nodes~$\{x^+,x^-\}$.
Let $(\wh\Si,\wh\si,\wh\fj)$ be the symmetric Riemann surface obtained from~$(\Si,\si,\fj)$
by replacing $x^+$ and~$x^-$ by 2~smooth points each.
We denote by \hbox{$q\!:\wh\Si\!\lra\!\Si$}
the holomorphic projection and by $\wh{x}_1^{\pm},\wh{x}_2^{\pm}\!\in\!\wh\Si$ 
the preimages of~$x^{\pm}$ so that $\wh\si(\wh{x}_i^{\pm})\!=\!\wh{x}_i^{\mp}$
for $i\!=\!1,2$.
We will then call the tuple
\BE{Sinormdfn_e} 
\big(q\!:\wh\Si\!\lra\!\Si,\wh\si,\wh\fj,(\wh{x}_1^+,\wh{x}_1^-),(\wh{x}_2^+,\wh{x}_2^-)\!\big)\EE
\sf{the normalization} of~$(\Si,\si,\fj)$ at~$\{x^+,x^-\}$.\\

\noindent
Let $(V,\vph)$ be a real bundle pair over~$(\Si,\si)$. Define
\begin{gather*}
\ev_x^+\!:\Ga(\wh\Si,q^*V)^{q^*\vph}\lra V_{x^+},~~
\ev_x^+(\xi)=\xi(\wh{x}_1^+)\!-\!\xi(\wh{x}_2^+), \\
\ev_x^+\!:\Ga^{0,1}_{\wh{\fJ}}(\wh\Si,q^*V)^{q^*\vph}\lra\{0\},\qquad
\0_{V_{x^+}}\!:V_{x^+}\lra\{0\}\,.
\end{gather*}
A real CR-operator $D_{(V,\vph)}$ on~$(V,\vph)$ lifts to 
a real CR-operator $q^*D_{(V,\vph)}$ on~$q^*(V,\vph)$ so that 
the triple
$$0\lra D_{(V,\vph)}\lra q^*D_{(V,\vph)} \stackrel{\ev_x^+}{\lra}\0_{V_{x^+}}\lra0$$
of operators is exact and thus induces an isomorphism
\BE{Rnodisom_e} \det q^*D_{(V,\vph)} \approx 
\big(\!\det D_{(V,\vph)}\big)\!\otimes\!\La_{\R}^{\top}\big(V_{x^+}\big)\EE
which depends continuously on~$D_{(V,\vph)}$;
see the {\it Exact Triples} property in Section~2.2 in the 5th arXiv version of~\cite{detLB}. 
The $\rk_{\C}L\!=\!1$ case of the first claim below is \cite[Lemma~7.5]{GI2}.

\begin{lmm}\label{Rnodisom_lmm}
Suppose $L$ is a complex vector bundle over a symmetric Riemann surface~$(\Si,\si,\fJ)$
with a conjugate pair of nodes~$\{x^+,x^-\}$ and~\eref{Sinormdfn_e}
is the normalization of~$(\Si,\si,\fj)$ at~$\{x^+,x^-\}$.
If $D_{(L\oplus\si^*\ov{L},\si_L^{\oplus})}$ is a real CR-operator on
the real bundle pair $(L\!\oplus\!\si^*\ov{L},\si_L^{\oplus})$ over~$(\Si,\si)$,
then the associated isomorphism~\eref{Rnodisom_e} respects 
the projection orientations
on the two determinant lines and the complex orientation on the last factor
 if and only if \hbox{$\rk_{\C}\!L\!\in\!2\Z$}.
If in addition $\wt\si$ is a conjugation on~$L$ lifting~$\si$ and 
$D_{2(L,\wt\si)}$ is a real CR-operator on the real bundle pair $2(L,\wt\si)$ over~$(\Si,\si)$,
then the associated isomorphism~\eref{Rnodisom_e} respects the canonical orientations
on the two determinant lines and the complex orientation on the last factor.
\end{lmm}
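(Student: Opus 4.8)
The plan is to reduce to the surjective case, exactly as in the proofs of Lemmas~\ref{CvsCanorient_lmm} and~\ref{Dblorient_lmm}, and then exploit the compatibility of~\eref{Rnodisom_e} with direct sums. First I would note that by the continuity of the projection and canonical orientations in the operator, it suffices to verify each claim for one convenient choice of real CR-operator. For the second (canonical) claim I would take $D_{2(L,\wt\si)}$ to be two copies of the restriction $D_{(L,\wt\si)}$ of a CR-operator $D_L$ on~$L$; since the evaluation map $\ev_x^+$ and the lift $q^*(\cdot)$ commute with direct sums, the exact triple defining~\eref{Rnodisom_e} for $2(L,\wt\si)$ is the direct sum of two copies of the corresponding exact triple for $(L,\wt\si)$, and $\La_\R^\top(2L_{x^+}^{\wt\si})=\La_\R^\top(L_{x^+}^{\wt\si})^{\otimes2}$ carries the canonical orientation. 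By the \emph{Exact Squares}/\emph{Direct Sum} properties of~\cite{detLB}, the isomorphism~\eref{Rnodisom_e} for $2(L,\wt\si)$ is the tensor square of the one for $(L,\wt\si)$, so it automatically respects the canonical orientations on both sides and the complex orientation on the last factor (a square of an isomorphism of line bundles is orientation-true regardless of whether the individual one is). This handles the second claim with no parity condition, as asserted.

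For the first (projection) claim, the strategy is to compare~\eref{Rnodisom_e} for the real bundle pair $(L\!\oplus\!\si^*\ov L,\si_L^\oplus)$ with the corresponding \emph{complex} statement~\eref{Cunionorient_e0}-type isomorphism
$$\det q^*D_L\approx(\det D_L)\otimes\La_\C^\top(L_{x^+})$$
on the underlying complex operator $D_L$ on~$L$ over~$\Si$. The projection isomorphisms~\eref{COrientIsom_e} intertwine the real picture on $(L\!\oplus\!\si^*\ov L,\si_L^\oplus)$ with the complex picture on~$L$: the projection $\Ga(\Si;L\!\oplus\!\si^*\ov L)^{\si_L^\oplus}\to\Ga(\Si;L)$ and its lift over~$\wh\Si$ identify the evaluation map $\ev_x^+$ on $q^*(L\!\oplus\!\si^*\ov L)^{q^*\si_L^\oplus}$ with the (full, complex) evaluation $\xi\mapsto\xi(\wh x_1^+)-\xi(\wh x_2^+)$ on $\Ga(\wh\Si;q^*L)$ landing in $L_{x^+}$, and they carry the exact triple on the real side to the exact triple on the complex side. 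Hence the square relating the two versions of~\eref{Rnodisom_e} via~\eref{COrientIsom_e} (twice) and the evident identification $(L_{x^+}\!\oplus\!\ov{L}_{x^+})^{\phi_L^\oplus}\cong L_{x^+}$ commutes up to homotopy. The complex statement respects the complex orientations throughout (isomorphism of complex-oriented determinant lines from a $\C$-linear exact triple). Therefore~\eref{Rnodisom_e} for the real bundle pair respects the projection orientations and the complex orientation on $\La_\R^\top(L_{x^+}\!\oplus\!\ov L_{x^+})^{\phi_L^\oplus}$; the remaining task is to compare this last orientation with the projection orientation of $(L_{x^+}\!\oplus\!\ov L_{x^+})^{\phi_L^\oplus}$, i.e.\ to track the sign in passing from the real determinant-line convention to the fiberwise convention.

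The main obstacle — and the source of the $\rk_\C L\!\in\!2\Z$ condition — is precisely this last bookkeeping step, together with the subtlety flagged repeatedly in the excerpt that~\eref{Cunionorient_e0} depends on the coherent system of determinant line bundles unless the operators are $\C$-linear or surjective. To pin down the sign cleanly I would, as in the earlier lemmas, further reduce to the case where $D_L$ is surjective (writing $(L,\wt\si)$, or rather $L$ itself, as a summand of a bundle admitting a surjective CR-operator and using the \emph{Exact Squares} additivity of the relevant indices); in the surjective case $\cok=0$ and both sides of~\eref{Rnodisom_e} are computed from kernels and from $V_{x^+}$ directly, so the comparison becomes the linear-algebra fact that reordering a real basis of $(L_{x^+}\!\oplus\!\ov L_{x^+})^{\phi_L^\oplus}\cong L_{x^+}$ from the "$\La_\C^\top$ then take real part" convention to the projection convention costs the sign $(-1)^{\binom{\rk_\C L}{2}}$, which vanishes iff $\rk_\C L$ is even — matching the stated criterion. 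Finally, having both claims, the consistency with Lemma~\ref{CvsCanorient_lmm} (which relates the canonical and projection pictures and already carries a $\binom{k}{2}$-type parity) gives an independent check: the "canonical" version is unconditional, the "projection" version picks up exactly the parity of $\rk_\C L$, and the two differ by the parity appearing in Lemma~\ref{CvsCanorient_lmm} applied fiberwise, as it must.
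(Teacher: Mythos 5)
Your overall strategy---reduce the canonical case to a direct--sum (Exact Squares) argument and reduce the projection case to the underlying $\C$-linear operator via~\eref{LovLtoLdfn_e}---is the same strategy the paper uses. But there are two genuine problems.

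The more serious one is your final sign count for the first claim. You claim the discrepancy is $(-1)^{\binom{\rk_\C L}{2}}$, ``which vanishes iff $\rk_\C L$ is even''; but $\binom{k}{2}$ is even precisely when $k\equiv 0,1\pmod 4$, so that exponent does not reproduce the condition $\rk_\C L\in 2\Z$ stated in the lemma. The actual sign in the paper comes from a different place: after the projection/conjugation isomorphisms identify the exact triple for $D_{(L\oplus\si^*\ov{L},\si_L^\oplus)}$ with the exact triple for the complex operator $D_L$ and the evaluation at both preimages of both nodes, the only remaining comparison is at the fiber: the isomorphism $(L\oplus\si^*\ov{L})_{x^+}=L_{x^+}\oplus\ov{L_{x^-}}\to L_{x^+}\oplus L_{x^-}$ is conjugation on the second summand, and conjugating a complex vector space $W$ changes its complex orientation by $(-1)^{\dim_\C W}$. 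That gives the exponent $\rk_\C L$, not $\binom{\rk_\C L}{2}$.

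The second problem is conceptual and is probably the source of the wrong sign. Because $\{x^+,x^-\}$ is a \emph{conjugate} pair of nodes, the point $x^+$ is not fixed by~$\si$, so there is no notion of $L_{x^+}^{\wt\si}$ or of $(\,\cdot\,)^{\phi_L^\oplus}$ at $x^+$: the conjugation maps the fiber over $x^+$ to the fiber over $x^-$, not to itself. Expressions like $\La_\R^\top(2L_{x^+}^{\wt\si})$ and $(L_{x^+}\oplus\ov{L}_{x^+})^{\phi_L^\oplus}\cong L_{x^+}$ in your write-up do not parse; the last tensor factor in~\eref{Rnodisom_e} is the full complex fiber $V_{x^+}$ with its complex orientation, and the bookkeeping you are trying to do should be the comparison of $\ov{L_{x^-}}$ with $L_{x^-}$, not a passage from a ``$\La_\C^\top$ convention'' to a ``real-part convention.'' (The analogous real-part analysis is what Lemma~\ref{RnodisomE_lmm} needs, for an $E$-node; it does not apply here.)

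On the second claim, your reasoning is nearly right, but it skips the Koszul sign coming from the Exact Squares property when you identify the exact triple for $2(L,\wt\si)$ with the tensor square of the one for $(L,\wt\si)$. The paper makes this explicit: the extra sign is controlled by the (real) dimension of $L_{x^+}$, which is $2\rk_\C L$ and hence even, so the sign is trivial. ``A tensor square is automatically orientation-true'' only settles it once you know the induced isomorphism really is that tensor square with no correcting sign; that is exactly the parity check you omitted.
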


\begin{proof}
By the continuous dependence of~\eref{Rnodisom_e} on~$D_{(V,\vph)}$, 
it is sufficient to establish each claim for 
a single real CR-operator on the relevant real bundle pair. 
Since the real dimension of $L_{x^+}$ is even,
the second claim for \hbox{$D_{2(L,\wt\si)}\!=\!D_{(L,\wt\si)}\!\oplus\!D_{(L,\wt\si)}$}
follows from the {\it Exact Squares} property applied 
to the commutative square of operators in Figure~\ref{Rnodisom_fig}.\\

\begin{figure}
$$\xymatrix{&0\ar[d]&0\ar[d]&0\ar[d]&\\
0\ar[r]& D_{(L,\wt\si)}\ar[r]\ar[d]&
q^*D_{(L,\wt\si)}\ar[r]\ar[d]& \0_{L_{x^+}}\ar[r]\ar[d]& 0\\
0\ar[r]& D_{2(L,\wt\si)}\ar[r]\ar[d]&
q^*D_{(2L,\wt\si)}\ar[r]\ar[d]& \0_{2L_{x^+}}\ar[r]\ar[d]& 0\\
0\ar[r]& D_{(L,\wt\si)}\ar[r]\ar[d]&
q^*D_{(L,\wt\si)}\ar[r]\ar[d]& \0_{L_{x^+}}\ar[r]\ar[d]& 0\\
&0&0&0&}$$
\caption{Exact square of operators for the proof of Lemma~\ref{Rnodisom_lmm}} 
\label{Rnodisom_fig}
\end{figure}

\noindent
Let $D_L$ be a CR-operator on the complex vector bundle~$L$
and~$D_{(L\oplus\si^*\ov{L},\si_L^{\oplus})}$ be as in~\eref{DLovLdfn_e}.
Define
\begin{gather*}
\ev_x\!:\Ga(\wh\Si,q^*L)\lra L_{x^+}\!\oplus\!L_{x^-},~~
\ev_x(\xi)=\big(\xi(\wh{x}_1^+)\!-\!\xi(\wh{x}_2^+),\xi(\wh{x}_1^-)\!-\!\xi(\wh{x}_2^-)\!\big),\\
\ev_x\!:\Ga^{0,1}_{\wh{\fJ}}(\wh\Si,q^*L)\lra\{0\}.
\end{gather*}
The CR-operator $D_L$ lifts to a CR-operator $q^*D_L$ on~$q^*L$ so that the diagram
\BE{Rnodisom_e5}\begin{split}
\xymatrix{0\ar[r]& D_{(L\oplus\si^*\ov{L},\si_L^{\oplus})}\ar[r]\ar[d]^{\approx}& 
q^*D_{(L\oplus\si^*\ov{L},\si_L^{\oplus})}\ar[r]^{\ev_x^+}\ar[d]^{\approx}&
\0_{(L\oplus\si^*\ov{L})_{x^+}}\ar[r]\ar[d]^{=}& 0\\
0\ar[r]& D_L\ar[r]& q^*D_L\ar[r]^{\ev_x}&\0_{L_{x^+}\oplus L_{x^-}}\ar[r]&0}
\end{split}\EE
is an isomorphism of exact triples of Fredholm operators.
The bottom row respects the complex orientations of the three determinant lines,
while the left and middle vertical isomorphisms intertwine the projection orientations
on the domains with the complex orientations on the targets.
The right vertical isomorphism is orientation-preserving with respect to the complex orientations
on its domain and target if and only if the complex dimension of~$L_{x^-}$ is even.
This establishes the first claim.
\end{proof}

\noindent
Suppose now that $(\Si,\si,\fj)$ is a symmetric Riemann surface with an isolated real node~$x$,
called \sf{$E$-node} in~\cite{RealGWsI} and elsewhere.
Let $(\wh\Si,\wh\si,\wh\fj)$ be the symmetric Riemann surface obtained from~$(\Si,\si,\fj)$
by replacing~$x$ by 2~smooth points.
We denote by \hbox{$q\!:\wh\Si\!\lra\!\Si$}
the holomorphic projection and by $\wh{x}^+,\wh{x}^-\!\in\!\wh\Si$ 
the preimages of~$x$; they are interchanged by the involution~$\wh\si$.
We will then call the tuple
\BE{SinormdfnE_e} 
\big(q\!:\wh\Si\!\lra\!\Si,\wh\si,\wh\fj,(\wh{x}^+,\wh{x}^-)\big)\EE
\sf{the normalization} of~$(\Si,\si,\fj)$ at~$x$.\\

\noindent
Let $(V,\vph)$ be a real bundle pair over~$(\Si,\si)$. 
Since $x\!\in\!\Si^{\si}$, $V_x^{\vph}\!\subset\!V_x$ is a half-dimensional totally real subspace.
An orientation on~$V_x^{\vph}$ and the complex orientation on~$V_x$ induce
an orientation on~~$V_x/V_x^{\vph}$ via the exact sequence
$$0\lra V_x^{\vph}\lra V_x\lra V_x/V_x^{\vph}\lra0.$$
If $(V,\vph)\!=\!(L\!\oplus\!\si^*\ov{L},\si_L^{\oplus})$ for a complex vector bundle $L\!\lra\!X$, 
we call the orientation on~$V_x/V_x^{\vph}$ induced by the associated projection orientation
of~$V_x^{\vph}$  the \sf{projection orientation}.
If \hbox{$(V,\vph)\!=\!2(L,\wt\si)$} for some real bundle pair~$(L,\wt\si)$
over~$(\Si,\si)$, then \hbox{$V_x^{\vph}\!=\!2L_x^{\wt\si}$}.
We call the orientation on~$V_x/V_x^{\vph}$ induced by the associated canonical orientation
of~$2L_x^{\wt\si}$ the \sf{canonical orientation}.
The isomorphism~\eref{GIgen_e2} with $\phi\!=\!\si$
intertwines the canonical orientation on its domain
and the projection orientation on its target if and only if $\binom{\rk_{\C}\!L-1}2$ is even.
The same applies to the isomorphism induced by~\eref{GIgen_e}
on the corresponding quotients~$V_x/V_x^{\vph}$.
Thus, Lemma~\ref{RnodisomE_lmm} below is consistent with Lemma~\ref{CvsCanorient_lmm}.\\

\noindent
Define
\begin{gather*}
\ev_x\!:\Ga(\wh\Si,q^*V)^{q^*\vph}\lra V_x/V_x^{\vph},~~
\ev_x(\xi)=\xi(\wh{x}^+)\!+\!V_x^{\vph}, \\
\ev_x\!:\Ga^{0,1}_{\wh{\fJ}}(\wh\Si,q^*V)^{q^*\vph}\lra\{0\},\qquad
\0_{V_x/V_x^{\vph}}\!:V_x/V_x^{\vph}\lra\{0\}\,.
\end{gather*}
A real CR-operator $D_{(V,\vph)}$ on~$(V,\vph)$ lifts to 
a real CR-operator $q^*D_{(V,\vph)}$ on~$q^*(V,\vph)$ so that 
the triple
$$0\lra D_{(V,\vph)}\lra q^*D_{(V,\vph)} \stackrel{\ev_x}{\lra}\0_{V_x/V_x^{\vph}}\lra0$$
of operators is exact and thus induces an isomorphism
\BE{RnodisomE_e} \det q^*D_{(V,\vph)} \approx 
\big(\!\det D_{(V,\vph)}\big)\!\otimes\!\La_{\R}^{\top}\big(V_x/V_x^{\vph}\big)\EE
which depends continuously on~$D_{(V,\vph)}$.

\begin{lmm}\label{RnodisomE_lmm}
Suppose $L$ is a complex vector bundle over a symmetric Riemann surface~$(\Si,\si,\fJ)$
with an $E$-node~$x$ and arithmetic genus~$g$.
Let~\eref{SinormdfnE_e} be the normalization of~$(\Si,\si,\fj)$ at~$x$.
If $D_{(L\oplus\si^*\ov{L},\si_L^{\oplus})}$ is a real CR-operator on
the real bundle pair $(L\!\oplus\!\si^*\ov{L},\si_L^{\oplus})$ over~$(\Si,\si)$,
then the associated isomorphism~\eref{RnodisomE_e} respects the projection orientations
on the three lines if and only if \hbox{$\rk_{\C}\!L\!\in\!2\Z$}.
If in addition $\wt\si$ is a conjugation on~$L$ lifting~$\si$ and 
$D_{2(L,\wt\si)}$ is a real CR-operator on the real bundle pair $2(L,\wt\si)$ over~$(\Si,\si)$,
then the associated isomorphism~\eref{RnodisomE_e} respects the canonical orientations
on the three lines if and only~if $\rk_{\C}\!L(g\!+\!\deg L)$ is even.
\end{lmm}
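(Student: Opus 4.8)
The plan is to follow the template of the proof of Lemma~\ref{Rnodisom_lmm}, adapted to the feature that normalizing at an $E$-node replaces the node by a \emph{conjugate pair} of smooth points $\wh{x}^+,\wh{x}^-$ with $\wh\si(\wh{x}^+)\!=\!\wh{x}^-$. As there, by the continuous dependence of~\eref{RnodisomE_e} on $D_{(V,\vph)}$ it suffices to establish each claim for one real CR-operator on the relevant bundle pair; I would take $D_{(L\oplus\si^*\ov L,\si_L^\oplus)}$ as in~\eref{DLovLdfn_e}, coming from a CR-operator $D_L$ on~$L$, and $D_{2(L,\wt\si)}\!=\!D_{(L,\wt\si)}\!\oplus\!D_{(L,\wt\si)}$ for the restriction $D_{(L,\wt\si)}$ of $D_L$ to the $\wt\si$-invariant subspaces.

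For the first claim I would build a commutative diagram of exact triples of Fredholm operators parallel to~\eref{Rnodisom_e5}: the top row is the $E$-node triple
$$0\lra D_{(L\oplus\si^*\ov L,\si_L^\oplus)}\lra q^*D_{(L\oplus\si^*\ov L,\si_L^\oplus)}\stackrel{\ev_x}{\lra}\0_{V_x/V_x^\vph}\lra0,$$
whose determinant isomorphism is~\eref{RnodisomE_e}, and the bottom row is the $\C$-linear triple
$$0\lra D_L\lra q^*D_L\lra\0_{L_x}\lra0$$
encoding the node-matching condition $\xi\mapsto\xi(\wh{x}^+)\!-\!\xi(\wh{x}^-)$, whose determinant isomorphism respects the complex orientations by the \emph{Exact Triples} property. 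The left and middle vertical maps are the projection isomorphisms~\eref{COrientIsom_e} over $\Si$ and over $\wh\Si$ (which by construction intertwine the projection and complex orientations), and the right vertical map is the isomorphism $V_x/V_x^\vph\!\lra\!L_x$ induced by the first projection $V_x\!=\!L_x\!\oplus\!\ov L_x\!\lra\!L_x$. Granting commutativity, the top row respects the projection orientations on all three lines if and only if this last isomorphism carries the projection orientation of $V_x/V_x^\vph$ to the complex orientation of $L_x$; a short linear-algebra comparison of the real basis of $V_x$ produced from a $\C$-basis of $L_x$ through the diagonal $V_x^\vph\!\subset\!V_x$ and the quotient $V_x/V_x^\vph$ against a complex-orientation basis of $V_x$ shows that this holds exactly when $\rk_\C L$ is even. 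That is the first claim.

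For the second claim the point factor $V_x/V_x^\vph\!=\!2L_x/2L_x^{\wt\si}$ has real dimension $2\rk_\C L$, but each of its two natural constituent quotients $L_x/L_x^{\wt\si}$ has real dimension $\rk_\C L$, which need not be even --- this is what forces the answer to depend on $g$ and $\deg L$, unlike the second claim of Lemma~\ref{Rnodisom_lmm}. One route is to apply the \emph{Exact Squares} property to the $E$-node analogue of the $3\!\times\!3$ grid of Figure~\ref{Rnodisom_fig}, whose rows are the $E$-node triples~\eref{RnodisomE_e} for $D_{(L,\wt\si)}$, $D_{2(L,\wt\si)}$, $D_{(L,\wt\si)}$ and whose columns record $D_{(L,\wt\si)}\!\hookrightarrow\!D_{2(L,\wt\si)}\!\twoheadrightarrow\!D_{(L,\wt\si)}$, the corresponding splitting of the $q^*$-operators, and $\0_{L_x/L_x^{\wt\si}}\!\hookrightarrow\!\0_{2L_x/2L_x^{\wt\si}}\!\twoheadrightarrow\!\0_{L_x/L_x^{\wt\si}}$ on the point factors. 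The orientation behavior of~\eref{RnodisomE_e} for $D_{(L,\wt\si)}$ drops out of the comparison (it enters through both the top and bottom rows), so the obstruction for $D_{2(L,\wt\si)}$ equals the \emph{Exact Squares} sign of the grid times the sign $\delta\!=\!(-1)^{\rk_\C L}$ by which the canonical orientation of $2L_x/2L_x^{\wt\si}$ differs from the square of any fiberwise orientation of $L_x/L_x^{\wt\si}$; the \emph{Exact Squares} sign is governed by the Fredholm indices in the grid, and since $\dim_\R(L_x/L_x^{\wt\si})\!=\!\rk_\C L$ need not be even --- in contrast with the even $\dim_\R L_{x^+}\!=\!2\rk_\C L$ in the conjugate-pair grid --- it need not be trivial. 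Using $\ind D_{(L,\wt\si)}\!=\!(1\!-\!g)\rk_\C L\!+\!\deg L$ and that $\wh\Si$ has arithmetic genus $g\!-\!1$ (so $\ind q^*D_{(L,\wt\si)}\!=\!(2\!-\!g)\rk_\C L\!+\!\deg L$), a short computation with $k^2\!\equiv\!k\pmod{2}$ collapses the outcome to the parity of $\rk_\C L\,(g\!+\!\deg L)$, which is the second claim. Alternatively --- and this is the consistency with Lemma~\ref{CvsCanorient_lmm} noted before the statement --- since $\wt\si$ exists one may deduce the second claim from the first via the isomorphism $\Phi_L$ of~\eref{GIgen_e}: $\Phi_L$, its pullback $q^*\Phi_L$ over $\wh\Si$, and its restriction to the fiber quotient at~$x$ fit into a commutative square relating~\eref{RnodisomE_e} for $2(L,\wt\si)$ and for $(L\oplus\si^*\ov L,\si_L^\oplus)$, whose vertical comparison signs are supplied by Lemma~\ref{CvsCanorient_lmm} over $\Si$ and over $\wh\Si$ together with the~\eref{GIgen_e2}-type fiber-quotient sign in $\rk_\C L$; combining the four contributions via $\binom{a+b}{2}\!\equiv\!\binom{a}{2}\!+\!\binom{b}{2}\!+\!ab\pmod{2}$ and Pascal's rule again produces the parity of $\rk_\C L\,(g\!+\!\deg L)$.

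The hard part is the first claim's right-hand square. Establishing its commutativity requires noting that a $\si_L^\oplus$-invariant section $(\xi,\xi')$ of $q^*(L\oplus\si^*\ov L)$ satisfies $\xi'(\wh{x}^+)\!=\!\ov{\xi(\wh{x}^-)}$, so that $\ev_x$ of the top row --- which only evaluates at $\wh{x}^+$ --- corresponds to the node difference $\xi(\wh{x}^+)\!-\!\xi(\wh{x}^-)$ of the bottom row after the projection identifications; and pinning down the sign relating the projection orientation of $V_x/V_x^\vph$ to the complex orientation of $L_x$ is the one genuinely computational point. Everything else is bookkeeping with the \emph{Exact Triples} and \emph{Exact Squares} properties, the projection orientations of Section~\ref{DistOrient_subs}, and Lemma~\ref{CvsCanorient_lmm}.
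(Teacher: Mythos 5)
Your proof follows the paper's own argument for both claims: the first via a commutative square of exact triples parallel to~\eref{Rnodisom_e5} with the projection isomorphisms as verticals, and the second via the Exact Squares property applied to the $E$-node analogue of Figure~\ref{Rnodisom_fig} together with the $(-1)^{\rk_\C L}$ discrepancy between the canonical orientation on $2L_x/2L_x^{\wt\si}$ defined through $2L_x^{\wt\si}\!\subset\!2L_x$ and the product orientation on $2(L_x/L_x^{\wt\si})$; your alternative route to the second claim through $\Phi_L$ and Lemma~\ref{CvsCanorient_lmm} is exactly the consistency the paper records just before the statement.

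One slip in the first-claim diagram should be corrected. The right vertical cannot be ``induced by the first projection $V_x\!=\!L_x\!\oplus\!\ov{L}_x\!\lra\!L_x$'': that projection restricts to the isomorphism $V_x^{\vph}\!\approx\!L_x$ used to define the projection orientation, so it does not vanish on $V_x^{\vph}$ and therefore does not descend to $V_x/V_x^{\vph}$. The well-defined vertical that makes the square commute is the difference $[(v,w)]\!\lra\!v\!-\!w$, as your own commutativity check (with $\xi'(\wh{x}^+)$ matching $\xi(\wh{x}^-)$ and $\ev_x(\xi)\!=\!\xi(\wh{x}^+)\!-\!\xi(\wh{x}^-)$ on the bottom row) actually forces. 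With respect to the projection orientation on $V_x/V_x^{\vph}$ and the complex orientation on $L_x$, this difference map carries the sign $(-1)^{\rk_\C L}$, which is precisely what yields the $\rk_\C L\!\in\!2\Z$ criterion; with the map corrected, the rest of your argument, including the $k^2\!\equiv\!k\pmod{2}$ bookkeeping in the second claim, runs as in the paper.
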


\begin{proof} 
For the purposes of the second claim, we replace~$L_{x^+}$ by~$L_x/L_x^{\wt\si}$
in the exact square of Fredholm operators in Figure~\ref{Rnodisom_fig}.
The canonical orientation on~$(2L)/(2L)^{2\wt\si}$ as defined above is
the same as the canonical orientation with respect to the isomorphism 
\hbox{$(2L)/(2L)^{2\wt\si}\!\approx\!2(L/L)^{\wt\si}$} if and only if~$\rk_{\C}\!L$ is~even.
By the Exact Squares property, the middle row in the diagram in Figure~\ref{Rnodisom_fig}
respects the canonical orientations on the two determinant lines and 
the latter orientation on~$(2L)/(2L)^{2\wt\si}$ if and only if 
$$\big(\ind\,\0_{L_x/L_x^{\vph}}\big)\big(\ind,D_{(L,\wt\si)}\big)
=\rk_{\C}\!L\big((1\!-\!g)\rk_{\C}\!L\!+\!\deg L\big)$$
is even. Combining the two statements, we obtain the second claim.\\

\noindent
For the purposes of the first claim, we define
$$\ev_x\!:\Ga(\wh\Si,q^*L)\lra L_x,~~ \ev_x(\xi)=\xi(\wh{x}^+)\!-\!\xi(\wh{x}^-)
\quad \ev_x\!:\Ga^{0,1}_{\wh{\fJ}}(\wh\Si,q^*L)\lra\{0\}.$$
With the assumptions as in~\eref{Rnodisom_e5}, the diagram
\begin{equation*}\begin{split}
\xymatrix{0\ar[r]& D_{(L\oplus\si^*\ov{L},\si_L^{\oplus})}\ar[r]\ar[d]^{\approx}& 
q^*D_{(L\oplus\si^*\ov{L},\si_L^{\oplus})}\ar[r]^<<<<{\ev_x}\ar[d]^{\approx}&
\0_{(L\oplus\si^*\ov{L})_x/(L\oplus\si^*\ov{L})_x^{\si_L^{\oplus}}}\ar[r]\ar[d]^{\approx}& 0\\
0\ar[r]& D_L\ar[r]& q^*D_L\ar[r]^{\ev_x}&\0_{L_x}\ar[r]&0}
\end{split}\end{equation*}
is an isomorphism of exact triples of Fredholm operators.
The bottom row respects the complex orientations of the three determinant lines, as before,
while the left and middle vertical isomorphisms intertwine the projection orientations
on the domains with the complex orientations on the targets.
The right vertical isomorphism, which is given by taking the difference of the two components 
of $L_x\!\oplus\!L_x$, is orientation-preserving with respect to the projection orientation
on its domain and the complex orientation on its target
if and only if the complex dimension of~$L_x$ is even.
This establishes the first claim.
\end{proof}

\subsection{Induced orientations}
\label{InducedOrien_subs}

\noindent
Let $(\Si,\si)$ be a symmetric Riemann surface and
$D_{(V,\vph)}$ be a real CR-operator on a rank~$k$ real bundle pair~$(V,\vph)$ over~$(\Si,\si)$.
A real orientation~$(L,[\psi],\fs)$ for $(V,\vph)$ determines
a homotopy class of isomorphisms
\BE{ROrientCompl_e2}\begin{split}
 \big(\!\det D_{(V,\vph)}\big)\!\otimes\!\big(\!\det D_{(L^*\oplus\si^*\ov{L}^*,\si_{L^*}^{\oplus})}\big)
&\approx \det D_{(V\oplus L^*\oplus\si^*\ov{L}^*,\vph\oplus\si_{L^*}^{\oplus})}\\
&\approx \det\!\big(\!(k\!+\!2)\dbar_{(\Si,\si)}\big)
\approx\det\!\big(k\dbar_{(\Si,\si)}\big)\!\otimes\!\det\!\big(2\dbar_{(\Si,\si)}\big)
\end{split}\EE
via isomorphisms for direct sums of real bundle pairs as in~\eref{Runionorient_e0}
and the homotopy class of isomorphisms~\eref{RBPisom_e2}.
Combined with the projection orientation on the second determinant line above
and the canonical orientation on the last determinant line,
\eref{ROrientCompl_e2} determines a homotopy class of isomorphisms
\BE{reldetdfn_e0}\det D_{(V,\vph)}
\approx\det\!\big(k\dbar_{(\Si,\si)}\big)
\approx \big(\!\det\dbar_{(\Si,\si)}\big)^{\otimes k}\EE
of real lines.
If in addition $\wt\si$ is a conjugation on~$L$ lifting~$\si$, isomorphisms as in~\eref{Runionorient_e0} 
and the homotopy class of isomorphisms~\eref{RBPisom_e3}
determine a homotopy class of isomorphisms
\BE{ROrientCompl_e3}\begin{split}
\big(\!\det D_{(V,\vph)}\big)\!\otimes\!\big(\!\det\,D_{2(L^*,\wt\si^*)}\big)
&\approx \det D_{(V\oplus 2L^*,\vph\oplus2\wt\si^*)} \\
&\approx\det\!\big(\!(k\!+\!2)\dbar_{(\Si,\si)}\big)
\approx\det\!\big(k\dbar_{(\Si,\si)}\big)\!\otimes\!\det\!\big(2\dbar_{(\Si,\si)}\big)\,.
\end{split}\EE
Along with the canonical orientations on the second and last determinant lines above,
\eref{ROrientCompl_e3} determines another homotopy class of isomorphisms~\eref{reldetdfn_e0}.\\

\noindent
The next four corollaries follow readily from the corresponding lemmas.
The $\rk_{\C}V\!=\!1$ case of the first claim of Corollary~\ref{Rnodisom_crl} is \cite[Lemma~7.5]{GI2}.
The second claims of Corollaries~\ref{Dblorient_crl} and~\ref{Rnodisom_crl} 
are Lemma~3.1 and Corollary~4.5, respectively, in~\cite{RealGWsII}.

\begin{crl}[of Lemma~\ref{unionorient_lmm}]\label{unionorient_crl}
Suppose $(\Si_1,\si_1,\fJ_1)$, $(\Si_2,\si_2,\fJ_2)$, $(\Si,\si,\fJ)$,
and $g_1,g_2$ are as in Lemma~\ref{unionorient_lmm},
$(V,\vph)$ is a rank~$k$ real bundle pair over~$(\Si,\si)$,
and $(L,[\psi],\fs)$ is a real orientation for~$(V,\vph)$.
If 
$$(V_i,\vph_i)=(V,\vph)\big|_{\Si_i} \quad\hbox{and}\quad 
\big(L_i,[\psi_i],\fs_i\big)=\big(L,[\psi],\fs\big)\!\big|_{\Si_i}$$
for $i\!=\!1,2$,
then the diagram 
\BE{unionorient_e0}\begin{split}
\xymatrix{\det D_{(V,\vph)} \ar[rr]^{\eref{ROrientCompl_e2}}_{\approx}
\ar[d]^{\approx}_{\eref{Runionorient_e0}}&& 
\det\!\big(k\dbar_{(\Si,\si)}\big)\ar[d]_{\approx}^{\eref{Runionorient_e0}}\\
\big(\!\det D_{(V_1,\vph_1)}\big)\!\otimes\!\big(\!\det D_{(V_2,\vph_2)}\big)
\ar[rr]^{\eref{ROrientCompl_e2}}_{\approx}&&
\det\!\big(k\dbar_{(\Si_1,\si_1)}\big)\!\otimes\!
\det\!\big(k\dbar_{(\Si_2,\si_2)}\big)\,,}
\end{split}\EE
with the homotopy classes of the horizontal isomorphisms 
determined by the real orientations $(L,[\psi],\fs)$ and $(L_i,[\psi_i],\fs_i)$,
commutes if and only if \hbox{$(g_1\!-\!1)(g_2\!-\!1)\!\in\!2\Z$}.
If in addition $\wt\si$ is a conjugation on~$L$ lifting~$\si$
and \hbox{$\wt\si_i\!=\!\wt\si|_{L_i}$} for $i\!=\!1,2$, 
then the diagram~\eref{unionorient_e0} with~\eref{ROrientCompl_e2} replaced by~\eref{ROrientCompl_e3}
commutes if and only~if 
$$(g_1\!-\!1)(g_2\!-\!1)\!+\!
\big(g_1\!-\!1\!+\!\deg L_1\big)\!\big(g_2\!-\!1\!+\!\deg L_2\big) \in2\Z.$$
\end{crl}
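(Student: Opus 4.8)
The plan is to unwind the induced isomorphism \eref{ROrientCompl_e2} into its constituent isomorphisms and then to track, one constituent at a time, how each commutes with the disjoint-union isomorphisms \eref{Runionorient_e0}. The constituents are: the direct-sum isomorphism \eref{Runionorient_e0} for the real bundle pair $(V,\vph)\!\oplus\!(L^*\!\oplus\!\si^*\ov{L}^*,\si_{L^*}^{\oplus})$; the homotopy class of isomorphisms \eref{RBPisom_e2}; the splitting $\C^{k+2}\!=\!\C^k\!\oplus\!\C^2$; together with the projection orientation put on $\det D_{(L^*\oplus\si^*\ov{L}^*,\si_{L^*}^{\oplus})}$ and the canonical orientation put on $\det\!\big(2\dbar_{(\Si,\si)}\big)$. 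I would chase an orientation of $\det D_{(V,\vph)}$ around the square \eref{unionorient_e0}, record at each stage the {\it Exact Squares}/Koszul sign by which the two routes differ, and multiply.

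The Koszul signs are controlled by a parity observation. Over any symmetric Riemann surface, the real index of a real CR-operator on $(L^*\!\oplus\!\si^*\ov{L}^*,\si_{L^*}^{\oplus})$ equals the complex index of a CR-operator on $L^*\!\oplus\!\si^*\ov{L}^*$, which is twice the complex index of a CR-operator on $L^*$ and hence even; likewise $\ind 2\dbar_{(\Si_i,\si_i)}\!=\!2(1\!-\!g_i)$ is even. Because of this, the direct-sum isomorphism \eref{Runionorient_e0} for $(V,\vph)\!\oplus\!(L^*\!\oplus\!\si^*\ov{L}^*,\si_{L^*}^{\oplus})$ commutes with the disjoint-union splittings without a sign (the only transposition involved pairs $\ind D_{(V_2,\vph_2)}$ against the even integer $\ind D_{(L_1^*\oplus\si^*\ov{L_1}^*,\si_{L_1^*}^{\oplus})}$); the splitting $\C^{k+2}\!=\!\C^k\!\oplus\!\C^2$ commutes with them up to $(-1)^{(\ind k\dbar_{(\Si_2,\si_2)})(\ind 2\dbar_{(\Si_1,\si_1)})}\!=\!(-1)^{2k(1-g_1)(1-g_2)}\!=\!+1$; and the reorderings needed to line up the $\Si$-level form of \eref{ROrientCompl_e2} with the tensor product of the two $\Si_i$-level forms are governed by the same even indices and so are trivial. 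The remaining two stages are also sign-free: restricting \eref{RBPisom_e2} to $\Si_i$ recovers the homotopy class of isomorphisms determined by $(L_i,[\psi_i],\fs_i)$ (this is the naturality of the construction of \cite{RBP} under restriction to topological components, i.e.\ the homotopy class over a disjoint union is the tensor product of the homotopy classes over the components), and by the first claim of Lemma~\ref{unionorient_lmm} applied with $L$ replaced by $L^*$, the disjoint-union splitting carries the projection orientation of $\det D_{(L^*\oplus\si^*\ov{L}^*,\si_{L^*}^{\oplus})}$ to the tensor product of the two projection orientations with no sign.

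The single surviving contribution comes from the canonical orientation on $\det\!\big(2\dbar_{(\Si,\si)}\big)$. Since $2\dbar_{(\Si,\si)}$ is the standard real CR-operator on $2(\Si\!\times\!\C,\si\!\times\!\fc)$, the second claim of Lemma~\ref{unionorient_lmm} with $\rk_{\C}\!L\!=\!1$ and $\deg L_1\!=\!\deg L_2\!=\!0$ shows that the disjoint-union splitting carries the canonical orientation to the tensor product of the two canonical orientations exactly when $(1\!-\!g_1)(1\!-\!g_2)\!\in\!2\Z$; it thus contributes the sign $(-1)^{(g_1-1)(g_2-1)}$. Multiplying all the contributions yields the total sign $(-1)^{(g_1-1)(g_2-1)}$, so the square \eref{unionorient_e0} commutes if and only if $(g_1\!-\!1)(g_2\!-\!1)\!\in\!2\Z$, which is the first assertion. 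For the second assertion I would run the same argument with \eref{ROrientCompl_e2}, the summand $(L^*\!\oplus\!\si^*\ov{L}^*,\si_{L^*}^{\oplus})$, \eref{RBPisom_e2}, and the projection orientation replaced by \eref{ROrientCompl_e3}, the summand $2(L^*,\wt\si^*)$, \eref{RBPisom_e3}, and the canonical orientation. Here $\ind D_{2(L_1^*,\wt\si_1^*)}$ is again even — twice the complex index of a CR-operator on $L_1^*$ — so the Koszul signs still vanish, but the analogue of the Lemma~\ref{unionorient_lmm} stage becomes its second claim with $L$ replaced by $L^*$, contributing $(-1)^{(g_1-1+\deg L_1)(g_2-1+\deg L_2)}$, while the $\det\!\big(2\dbar_{(\Si,\si)}\big)$-stage still contributes $(-1)^{(g_1-1)(g_2-1)}$; the total sign is $(-1)^{(g_1-1)(g_2-1)+(g_1-1+\deg L_1)(g_2-1+\deg L_2)}$, as claimed.

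The main obstacle will be the Koszul-sign bookkeeping in the fourfold tensor products: making precise which transpositions occur in each stage and confirming that all of them involving the index of $(L^*\!\oplus\!\si^*\ov{L}^*,\si_{L^*}^{\oplus})$, of $2(L^*,\wt\si^*)$, or of $2\dbar_{(\Si_i,\si_i)}$ really are trivial, which rests entirely on the parity observation above — together with recording explicitly the naturality of the isomorphisms of \cite{RBP} under restriction to topological components.
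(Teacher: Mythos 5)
Your proposal is correct and follows essentially the same route as the paper's proof: you observe that the outer isomorphisms in~\eref{ROrientCompl_e2} commute with the disjoint-union splittings because the indices of $D_{(L_i^*\oplus\si_i^*\ov{L}_i^*,\si_{L_i^*}^{\oplus})}$, $D_{2(L_i^*,\wt\si_i^*)}$, and $2\dbar_{(\Si_i,\si_i)}$ are even, that the middle isomorphism~\eref{RBPisom_e2} restricts compatibly, and then reduce the sign to the first statement of Lemma~\ref{unionorient_lmm} with $L$ replaced by $L^*$ together with its second statement applied to $L=\Si\times\C$. The paper's argument is the same, merely stated more compactly.
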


\begin{proof} 
Since the indices of $D_{(L_i^*\oplus\si_i^*\ov{L}_i^*,\si_{L_i^*}^{\oplus})}$,
$D_{2(L_i^*,\wt\si_i^*)}$, and $2\dbar_{(\Si_i,\si_i)}$ are even,
the outer isomorphisms in~\eref{ROrientCompl_e2} and~\eref{ROrientCompl_e3}
commute with the isomorphisms~\eref{Runionorient_e0} associated with 
the disjoint union of $(\Si_1,\si_1,\fJ_1)$ and~$(\Si_2,\si_2,\fJ_2)$.
Since the restrictions of the homotopy classes of 
isomorphisms~\eref{RBPisom_e2} and~\eref{RBPisom_e3} determined by~$(L,[\psi],\fs)$ to~$\Si_i$
are determined by~$(L_i,[\psi_i],\fs_i)$,
the homotopy classes of the middle isomorphisms in~\eref{ROrientCompl_e2} and~\eref{ROrientCompl_e3}
also commute with the isomorphisms~\eref{Runionorient_e0} associated with 
the disjoint union of $(\Si_1,\si_1,\fJ_1)$ and~$(\Si_2,\si_2,\fJ_2)$.
The first claim of the corollary thus follows from the first statement of Lemma~\ref{unionorient_lmm}
with~$L$ replaced by~$L^*$ and the second statement of this lemma with $L\!=\!\Si\!\times\!\C$.
The second claim of the corollary similarly follows from the second statement of 
Lemma~\ref{unionorient_lmm} with~$L$ replaced by~$L^*$ and with $L\!=\!\Si\!\times\!\C$.
\end{proof}

\begin{crl}[of Lemma~\ref{Dblorient_lmm}]\label{Dblorient_crl}
Suppose $(\Si_1,\fj_1)$, $(\Si_2,\fJ_2)$, $g$, $(\Si,\si,\fj)$, and~$\si_1$
are as in Lemma~\ref{Dblorient_lmm}.
Let  $(V,\vph)$ and $(L,[\psi],\fs)$ be as in Corollary~\ref{unionorient_crl}.
If $L_i\!=\!L|_{\Si_i}$ for $i\!=\!1,2$,
then the composition
\BE{Dblorient_e0a} 
\det D_{(V,\vph)}|_{\Si_1}\underset{\approx}{\xlla{\eref{Dblorient_e}}}\det D_{(V,\vph)}
\underset{\approx}{\xlra{\eref{ROrientCompl_e2}}} 
\det\big(k\dbar_{(\Si,\si)}\big)
\underset{\approx}{\xlra{\eref{Dblorient_e}}}
\det\big(k\dbar_{\Si_1}\big)\,,\EE
with the middle isomorphism determined by $(L,[\psi],\fs)$, 
intertwines the complex orientations on the two ends if and only if
\hbox{$g\!-\!1\!+\!\deg L_2\!\in\!2\Z$}.
If in addition $\wt\si$ is a conjugation on~$L$ lifting~$\si$, 
then the composition~\eref{Dblorient_e0a} with~\eref{ROrientCompl_e2}
replaced by~\eref{ROrientCompl_e3}
intertwines the complex orientations on the two~ends.
\end{crl}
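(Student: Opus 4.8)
The plan is to deduce Corollary~\ref{Dblorient_crl} from Lemma~\ref{Dblorient_lmm} in exactly the same spirit as Corollary~\ref{unionorient_crl} was deduced from Lemma~\ref{unionorient_lmm}. First I would set up the relevant commutative square of isomorphisms: the left column is the restriction isomorphism~\eref{Dblorient_e} applied to $\det D_{(V,\vph)}$ (and to the auxiliary determinant lines), the top and bottom rows are the isomorphisms~\eref{ROrientCompl_e2} determined by the real orientation $(L,[\psi],\fs)$ on $(\Si,\si)$ and by its restriction to $\Si_1$ respectively, and the right column is~\eref{Dblorient_e} applied to $\det\big((k{+}2)\dbar_{(\Si,\si)}\big)$, equivalently to $\det\big(k\dbar_{(\Si,\si)}\big)\otimes\det\big(2\dbar_{(\Si,\si)}\big)$. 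The composition~\eref{Dblorient_e0a} is the path around the top-right of this square, so the corollary will follow once I know whether the square commutes and, separately, whether the restriction isomorphism on the auxiliary factors preserves or reverses the relevant orientations.

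The key observations, in order, would be: (i) the restriction isomorphism~\eref{Dblorient_e} commutes with the direct-sum isomorphisms~\eref{Runionorient_e0}, since restriction to $\Si_1$ commutes with taking direct sums of real bundle pairs, exactly as in the proof of Corollary~\ref{unionorient_crl}; (ii) the homotopy class of isomorphisms~\eref{RBPisom_e2} (resp.~\eref{RBPisom_e3}) determined by $(L,[\psi],\fs)$ restricts on $\Si_1$ to the one determined by the restricted real orientation, so the middle isomorphism in~\eref{ROrientCompl_e2} (resp.~\eref{ROrientCompl_e3}) is compatible with restriction; (iii) for the auxiliary real bundle pair $(L^*\oplus\si^*\ov{L}^*,\si_{L^*}^{\oplus})$, the first statement of Lemma~\ref{Dblorient_lmm} with $L$ replaced by $L^*$ says that~\eref{Dblorient_e1b} respects the projection orientation on the left and the complex orientation on the right if and only if $(1{-}g)\rk_\C L^* + \deg L_2^* \in 2\Z$, i.e. $g{-}1{+}\deg L_2\in2\Z$ (using $\rk_\C L^*=\rk_\C L=1$ in the setting of the corollary and $\deg L_2^*=-\deg L_2\equiv\deg L_2\pmod 2$); and (iv) for the trivial line bundle $L=\Si\times\C$ the constant term appearing with $2\dbar_{(\Si,\si)}$ contributes nothing mod~$2$. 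Assembling (i)--(iv), the composition~\eref{Dblorient_e0a} intertwines the complex orientations precisely when $g{-}1{+}\deg L_2\in2\Z$, which is the first claim. For the second claim, the canonical-orientation parts of Lemma~\ref{Dblorient_lmm} (the statement about~\eref{Dblorient_e2b}) always respect the relevant orientations, so feeding these in place of the projection-orientation facts removes the parity condition entirely and yields unconditional commutativity.

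I would phrase the write-up so that it parallels the proof of Corollary~\ref{unionorient_crl} almost verbatim: first dispatch the compatibility of the outer and middle isomorphisms in~\eref{ROrientCompl_e2}--\eref{ROrientCompl_e3} with the restriction isomorphisms~\eref{Dblorient_e} (using that the indices of $D_{(L_1^*\oplus\si_1^*\ov{L}_1^*,\si_{L_1^*}^{\oplus})}$, $D_{2(L_1^*,\wt\si_1^*)}$, and $2\dbar_{\Si_1}$ are even so no extra signs intrude), then invoke Lemma~\ref{Dblorient_lmm} with $L\rightsquigarrow L^*$ and with $L=\Si\times\C$ to pick up the parity condition on the $(L^*\oplus\si^*\ov{L}^*)$-factor only. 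The one subtlety I expect to be the main obstacle is purely bookkeeping: tracking that the determinant line of $2\dbar_{(\Si,\si)}$ (equivalently, the ``two extra $\C$'s'' in~\eref{RBPisom_e2}) is handled with its \emph{canonical} orientation on both $\Si$ and $\Si_1$, and confirming via the second claim of Lemma~\ref{Dblorient_lmm} that this canonical orientation is preserved by restriction without any parity constraint — so it genuinely drops out of the final condition and only the $\deg L_2$ term survives. Once that is pinned down, the rest is a diagram chase of the type already executed for Corollary~\ref{unionorient_crl}, and no new geometric input is needed.
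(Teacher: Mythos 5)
Your proposal follows the same overall strategy as the paper's proof (set up the auxiliary-factor comparison, then invoke Lemma~\ref{Dblorient_lmm} once with $L\rightsquigarrow L^*$ and once with $L=\Si\!\times\!\C$), and the parity bookkeeping in~(iii)--(iv) and the final paragraph is correct. There is, however, a small but genuine gap in step~(ii): there is no ``restricted real orientation'' on~$\Si_1$ in the sense of Definition~\ref{realorient_dfn0}, since $\Si_1$ is only half of a doublet and carries no involution. So the phrase does not type-check, and more importantly it papers over exactly the observation that makes the bottom row of your square usable: what is true, and what the paper states, is that the restriction of the isomorphism~\eref{RBPisom_e2} to~$\Si_1$ is a \emph{complex} trivialization of~$V\!\oplus\!L^*\!\oplus\!\si^*\ov{L}^*|_{\Si_1}$, and this is the reason the resulting ``complex-side'' composite (the paper's~\eref{ROrientCompl_e2c}) is orientation-preserving for the \emph{complex} orientations on all three determinant lines over~$\Si_1$. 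Without that fact you know the square commutes, but you do not yet know the orientation of the bottom-row endpoint; so the assembly ``(i)--(iv)'' doesn't quite close. Once you replace (ii) with this observation — restriction of a real-bundle-pair trivialization over a doublet to one sheet is a complex trivialization, hence preserves complex orientations — the rest of your argument goes through verbatim, matching the paper's proof.
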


\begin{proof}
Isomorphisms for direct sums of complex vector bundles as in~\eref{Cunionorient_e0}
and the restrictions of the isomorphisms~\eref{RBPisom_e2} to the bundles over~$\Si_1$
determine a homotopy class of isomorphisms
\BE{ROrientCompl_e2c}\begin{split}
 \big(\!\det D_{(V,\vph)}|_{\Si_1}\big)\!\otimes\!
\big(\!\det D_{(L^*\oplus\si^*\ov{L}^*,\si_{L^*}^{\oplus})}|_{\Si_1}\big)
&\approx \det D_{(V\oplus L^*\oplus\si^*\ov{L}^*,\vph\oplus\si_{L^*}^{\oplus})}|_{\Si_1}\\
&\approx \det\!\big(\!(k\!+\!2)\dbar_{\Si}\big)
\approx\det\!\big(k\dbar_{\Si}\big)\!\otimes\!\det\!\big(2\dbar_{\Si}\big).
\end{split}\EE
Since the restrictions of isomorphisms~\eref{RBPisom_e2} are complex trivializations 
$V\!\oplus\!L^*\!\oplus\!\si^*\ov{L}^*|_{\Si_1}$,
\eref{ROrientCompl_e2c} is orientation-preserving with respect to the complex orientations
on all determinant lines above.
The isomorphisms~\eref{Dblorient_e} intertwine the isomorphisms in~\eref{ROrientCompl_e2} 
and~\eref{ROrientCompl_e2c}. 
The first claim of the corollary thus follows from the first statement of Lemma~\ref{Dblorient_lmm}
with~$L$ replaced by~$L^*$ and the second statement of this lemma with $L\!=\!\Si\!\times\!\C$.
The second claim of the corollary similarly follows from the second statement of 
Lemma~\ref{Dblorient_lmm} with~$L$ replaced by~$L^*$ and with $L\!=\!\Si\!\times\!\C$.
\end{proof}

\begin{crl}[of Lemma~\ref{Rnodisom_lmm}]\label{Rnodisom_crl}
Suppose $(\Si,\si,\fJ)$, $\{x^+,x^-\}$, $(\wh\Si,\wh\si)$, and $q$ 
are as in Lemma~\ref{Rnodisom_lmm}.
If $(V,\vph)$ and $(L,[\psi],\fs)$ are as in Corollary~\ref{unionorient_crl},
the diagram
\BE{Rnodisom_e0a}\begin{split}
\xymatrix{\det q^*D_{(V,\vph)}\ar[rr]_{\approx}^{\eref{ROrientCompl_e2}}
\ar[d]_{\eref{Rnodisom_e}}^{\approx}&& 
\det\!\big(k\dbar_{(\wh\Si,\wh\si)}\big)\ar[d]^{\eref{Rnodisom_e}}_{\approx}\\
\det D_{(V,\vph)}\!\otimes\!\La_{\R}^{\top}\big(V_{x^+}\big)
\ar[rr]_{\approx}^{\eref{ROrientCompl_e2}}&&
\det\!\big(k\dbar_{(\Si,\si)}\big)\!\otimes\!\La_{\R}^{\top}\big(\C^k\big)\,,}  
\end{split}\EE
with the homotopy classes of the horizontal isomorphisms determined 
by the real orientations $q^*(L,[\psi],\fs)$ and~$(L,[\psi],\fs)$ 
and by the complex orientations of~$V_{x^+}$ and~$\C^k$,
does not commute up to homotopy.
If in addition $\wt\si$ is a conjugation on~$L$ lifting~$\si$,
then the diagram~\eref{Rnodisom_e0a} with~\eref{ROrientCompl_e2}
replaced by~\eref{ROrientCompl_e3} commutes up to homotopy.
\end{crl}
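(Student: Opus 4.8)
The plan is to reduce the statement to Lemma~\ref{Rnodisom_lmm} via a diagram chase, in the spirit of the proof of Corollary~\ref{unionorient_crl}. Recall that \eref{ROrientCompl_e2} is the composite of the direct-sum isomorphism \eref{Runionorient_e0} for the splitting $(V,\vph)\!\oplus\!(L^*\!\oplus\!\si^*\ov{L}^*,\si_{L^*}^{\oplus})$, the homotopy class of isomorphisms \eref{RBPisom_e2} determined by~$(L,[\psi],\fs)$, and the direct-sum isomorphism for $\C^{k+2}=\C^k\!\oplus\!\C^2$. Each of the four terms of~\eref{ROrientCompl_e2} has a $q^*$-counterpart over $(\wh\Si,\wh\si)$ linked to it by the normalization isomorphism~\eref{Rnodisom_e}, and these counterparts and links assemble the three isomorphisms above into three squares whose total boundary is~\eref{Rnodisom_e0a} (once the auxiliary orientations are accounted for). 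It therefore suffices to analyze each square.

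First I would check that the two direct-sum squares commute with no sign and that the $[\psi]$-square commutes. Combining the normalization exact triples with the direct-sum exact triples produces $3\!\times\!3$ diagrams of exact triples of Fredholm operators, and by the \emph{Exact Squares} property the only sign that can arise is a product of indices of operators of the form~$\0_{W_{x^+}}$; but $\ind\,\0_{W_{x^+}}=\dim_{\R}W_{x^+}=2\rk_{\C}W$ is even -- equivalently, the real line $\La_{\R}^{\top}W_{x^+}$ is even-dimensional and may be moved freely through determinant lines -- so the direct-sum squares are sign-free. The $[\psi]$-square commutes by naturality of~\eref{Rnodisom_e} under isomorphisms of real bundle pairs: the pullback by~$q$ of the homotopy class~\eref{RBPisom_e2} determined by~$(L,[\psi],\fs)$ is the homotopy class~\eref{RBPisom_e2} determined by~$q^*(L,[\psi],\fs)$, and its fiberwise restriction at~$x^+$ is $\C$-linear, hence preserves the complex orientation on~$\La_{\R}^{\top}$. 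Thus the commutativity of~\eref{Rnodisom_e0a} is governed entirely by the two remaining edges, namely~\eref{Rnodisom_e} applied to the auxiliary real CR-operators $D_{(L^*\oplus\si^*\ov{L}^*,\si_{L^*}^{\oplus})}$ (carrying the projection orientation) and $2\dbar_{(\Si,\si)}=D_{2(\Si\times\C,\si\times\fc)}$ (carrying the canonical orientation). By the first claim of Lemma~\ref{Rnodisom_lmm} applied to the complex line bundle~$L^*$ (so $\rk_{\C}L^*=1\!\not\in\!2\Z$), the first of these does \emph{not} respect the projection orientations together with the complex orientation on~$\La_{\R}^{\top}(L^*\oplus\si^*\ov{L}^*)_{x^+}$, contributing a sign~$-1$; by the second claim of Lemma~\ref{Rnodisom_lmm} applied to $(\Si\!\times\!\C,\si\!\times\!\fc)$, the second \emph{does} respect the canonical orientations together with the complex orientation on~$\La_{\R}^{\top}\C^2$, contributing~$+1$. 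The total sign around~\eref{Rnodisom_e0a} is therefore $(-1)(+1)=-1$, so it does not commute up to homotopy, which is the first claim.

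For the second claim I would repeat the argument with~\eref{ROrientCompl_e2} replaced by~\eref{ROrientCompl_e3}: the two auxiliary CR-operators are now $D_{2(L^*,\wt\si^*)}$ and $2\dbar_{(\Si,\si)}$, each carrying its canonical orientation, and the second claim of Lemma~\ref{Rnodisom_lmm} applied to $(L^*,\wt\si^*)$ and to $(\Si\!\times\!\C,\si\!\times\!\fc)$ shows that~\eref{Rnodisom_e} respects the canonical orientations in both cases, for a total sign $(+1)(+1)=+1$ and hence commutativity up to homotopy. The step I expect to be the main obstacle is the parity bookkeeping in the second paragraph: one must confirm that, after the normalization triples and the direct-sum triples are assembled into $3\!\times\!3$ diagrams, every sign produced by the \emph{Exact Squares} property is indexed by an operator~$\0_{W_{x^+}}$ of even index, so that all of them vanish; once this is settled, the conclusion is the direct application of Lemma~\ref{Rnodisom_lmm} indicated above.
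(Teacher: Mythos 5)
Your proof is correct and follows essentially the same route as the paper's. You decompose \eref{ROrientCompl_e2} and \eref{ROrientCompl_e3} into the same three stages (direct sums, the $[\psi]$-isomorphism, direct sums), reduce the diagram to commuting squares by the Exact Squares property and naturality under pullback by $q$, and then extract the sign from Lemma~\ref{Rnodisom_lmm} applied to $L^*$ (odd rank, yielding $-1$ in the projection-orientation case) and to $\Si\times\C$ (yielding $+1$), exactly as the paper does. The one cosmetic difference is in which factor of the Exact Squares index product you exhibit as even: you note that $\ind\,\0_{W_{x^+}}$ is even because the fibers are complex, while the paper notes that the indices of the auxiliary operators $D_{(L^*\oplus\si^*\ov{L}^*,\si_{L^*}^{\oplus})}$, $D_{2(L^*,\wt\si^*)}$, and $2\dbar_{(\Si,\si)}$ are even; since both factors appear in the product, either observation kills the sign.
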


\begin{proof} 
Since the indices of $D_{(L^*\oplus\si^*\ov{L}^*,\si_{L^*}^{\oplus})}$,
$D_{2(L^*,\wt\si^*)}$, and $2\dbar_{(\Si,\si)}$ are even,
the outer isomorphisms in~\eref{ROrientCompl_e2} and~\eref{ROrientCompl_e3}
commute with the corresponding isomorphisms~\eref{Rnodisom_e}.
Since the pullbacks of the homotopy classes of 
isomorphisms~\eref{RBPisom_e2} and~\eref{RBPisom_e3} determined by~$(L,[\psi],\fs)$ to~$\wh\Si$
are determined by~$q^*(L,[\psi],\fs)$,
the homotopy classes of the middle isomorphisms in~\eref{ROrientCompl_e2} and~\eref{ROrientCompl_e3}
also commute with the corresponding isomorphisms~\eref{Rnodisom_e}.
The first claim of the corollary thus follows from the first statement of Lemma~\ref{Rnodisom_lmm}
with~$L$ replaced by~$L^*$ and the second statement of this lemma with $L\!=\!\Si\!\times\!\C$.
The second claim of the corollary similarly follows from the second statement of 
Lemma~\ref{Rnodisom_lmm} with~$L$ replaced by~$L^*$ and with $L\!=\!\Si\!\times\!\C$.
\end{proof}

\begin{crl}[of Lemma~\ref{RnodisomE_lmm}]\label{RnodisomE_crl}
Suppose $(\Si,\si,\fJ)$, $x$, $g$, $(\wh\Si,\wh\si)$, and $q$ are as in Lemma~\ref{RnodisomE_lmm}.
If $(V,\vph)$ and $(L,[\psi],\fs)$ are as in Corollary~\ref{unionorient_crl},
the diagram
\BE{RnodisomE_e0a}\begin{split}
\xymatrix{\det q^*D_{(V,\vph)}\ar[rr]_{\approx}^{\eref{ROrientCompl_e2}}
\ar[d]_{\eref{RnodisomE_e}}^{\approx}&& 
\det\!\big(k\dbar_{(\wh\Si,\wh\si)}\big)\ar[d]^{\eref{RnodisomE_e}}_{\approx}\\
\det D_{(V,\vph)}\!\otimes\!\La_{\R}^{\top}\big(V_x/V_x^{\vph}\big)
\ar[rr]_{\approx}^{\eref{ROrientCompl_e2}}&&
\det\!\big(k\dbar_{(\Si,\si)}\big)\!\otimes\!\La_{\R}^{\top}\big(\C^k/\R^k\big)\,,}  
\end{split}\EE
with the homotopy classes of the horizontal isomorphisms determined 
by the real orientations $q^*(L,[\psi],\fs)$ and~$(L,[\psi],\fs)$,
the induced orientation of~$V_x^{\vph}$, and
the standard orientation of~$\C^k/\R^k$,
commutes up to homotopy if and only if \hbox{$g\!-\!1\!\in\!2\Z$}.
If in addition $\wt\si$ is a conjugation on~$L$ lifting~$\si$,
then the diagram~\eref{RnodisomE_e0a} with~\eref{ROrientCompl_e2}
replaced by~\eref{ROrientCompl_e3} commutes up to homotopy
if and only if \hbox{$\deg L\!\in\!2\Z$}.
\end{crl}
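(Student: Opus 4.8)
The plan is to mimic the proof of Corollary~\ref{Rnodisom_crl}, replacing Lemma~\ref{Rnodisom_lmm} and the isomorphism~\eref{Rnodisom_e} throughout by Lemma~\ref{RnodisomE_lmm} and the $E$-node normalization isomorphism~\eref{RnodisomE_e}, and then combining the two resulting parity conditions. Concretely, I would insert~\eref{RnodisomE_e} for each of the real bundle pairs occurring in the chain~\eref{ROrientCompl_e2} (namely $(V,\vph)$, $(L^*\!\oplus\!\si^*\ov{L}^*,\si_{L^*}^{\oplus})$, their direct sum, the trivial bundle $\Si\!\times\!\C^{k+2}$, and its split into $\Si\!\times\!\C^k$ and $\Si\!\times\!\C^2$), so that~\eref{RnodisomE_e0a} becomes a composition of squares.

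The structural squares commute up to homotopy by the same reasoning as in the proof of Corollary~\ref{Rnodisom_crl}: since the indices of $D_{(L^*\oplus\si^*\ov{L}^*,\si_{L^*}^{\oplus})}$ and $2\dbar_{(\Si,\si)}$ and the real dimensions of the corresponding quotients at~$x$ are even, the {\it Exact Squares} property shows that~\eref{RnodisomE_e} commutes with the direct-sum isomorphisms~\eref{Runionorient_e0} used to build the outer isomorphisms of~\eref{ROrientCompl_e2}; and the pullback to~$\wh\Si$ of the trivialization~\eref{RBPisom_e2} determined by~$(L,[\psi],\fs)$ is the one determined by~$q^*(L,[\psi],\fs)$, which matches the induced orientation of~$V_x^{\vph}$ (and hence the orientation of~$V_x/V_x^{\vph}$ used in~\eref{RnodisomE_e0a}) with the standard orientation of~$\R^k$ (and hence of~$\C^k/\R^k$), so~\eref{RnodisomE_e} commutes with~\eref{RBPisom_e2} as well. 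Thus the only possible obstruction to commutativity of~\eref{RnodisomE_e0a} is the behavior of~\eref{RnodisomE_e} on $\det D_{(L^*\oplus\si^*\ov{L}^*,\si_{L^*}^{\oplus})}$ with its projection orientation and on $\det\!\big(2\dbar_{(\Si,\si)}\big)$ with its canonical orientation.

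For the first claim, Lemma~\ref{RnodisomE_lmm} with $L$ replaced by~$L^*$ (so that $\rk_{\C}\!L^*\!=\!1$ is odd) shows that~\eref{RnodisomE_e} reverses the projection orientations on the lines attached to $(L^*\!\oplus\!\si^*\ov{L}^*,\si_{L^*}^{\oplus})$, while the same lemma with $L\!=\!\Si\!\times\!\C$ shows that~\eref{RnodisomE_e} respects the canonical orientations on the lines attached to $2\dbar_{(\Si,\si)}$ if and only if~$g$ is even. The product of these two signs is $(-1)^{g+1}$, which is trivial exactly when $g\!-\!1\!\in\!2\Z$; this proves the first claim. For the second claim one argues identically with $(L^*\!\oplus\!\si^*\ov{L}^*,\si_{L^*}^{\oplus})$ replaced by $2(L^*,\wt\si^*)$ and~\eref{RBPisom_e2} by~\eref{RBPisom_e3}: now the second statement of Lemma~\ref{RnodisomE_lmm} with $L$ replaced by~$L^*$ gives the sign $(-1)^{g-\deg L}$ for the $2(L^*,\wt\si^*)$ contribution, the $2\dbar_{(\Si,\si)}$ contribution is again $(-1)^g$, and the product $(-1)^{g-\deg L}(-1)^g\!=\!(-1)^{\deg L}$ is trivial exactly when $\deg L\!\in\!2\Z$.

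The computational core is entirely contained in Lemma~\ref{RnodisomE_lmm}, so I do not anticipate a genuine obstacle; the only delicate point is the bookkeeping of the various $\La^{\top}_{\R}(\,\cdot\,)$ factors in~\eref{RnodisomE_e}, specifically verifying that the projection-orientation factor of $(L^*\!\oplus\!\si^*\ov{L}^*)_x/(\cdot)_x^{\si_{L^*}^{\oplus}}$ and the standard-orientation factor of $(\C^2)_x/(\R^2)_x$ coming from $2\dbar_{(\Si,\si)}$ are matched without an extra sign by the trivialization~\eref{RBPisom_e2}. This is the same verification already implicit in the proof of Corollary~\ref{Rnodisom_crl}, so no new difficulty arises.
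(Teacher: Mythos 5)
Your proof is correct and follows essentially the same route as the paper's, which is a one-line reference to the proof of Corollary~\ref{Rnodisom_crl} with Lemma~\ref{Rnodisom_lmm} swapped for Lemma~\ref{RnodisomE_lmm}. You have simply spelled out the two sign contributions (from the first statement of Lemma~\ref{RnodisomE_lmm} with $L$ replaced by~$L^*$, and from its second statement with $L\!=\!\Si\!\times\!\C$, giving $(-1)\cdot(-1)^g$ for the first claim and $(-1)^{g-\deg L}\cdot(-1)^g$ for the second) that the paper leaves implicit, and the bookkeeping checks out.
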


\begin{proof}
The proof is identical to that of Corollary~\ref{Rnodisom_crl},
with Lemma~\ref{Rnodisom_lmm} replaced by~\ref{RnodisomE_lmm}.
\end{proof}

\noindent
By Definition~\ref{realorient_dfn0}\ref{spin_it0},
a real orientation~$(L,[\psi],\fs)$  on~$(V,\vph)$ determines a \sf{relative spin structure}
on the real bundle~$V^{\vph}$ over~$X^{\phi}\!\subset\!X$ in
the spirit of \cite[Definition~8.1.2]{FOOO},
which we call the \sf{the associated relative spin structure on~$V^{\vph}$}.
If in addition~$\wt\phi$ is a conjugation on~$L$ lifting~$\phi$
and $L^{\wt\phi}\!\lra\!X^{\phi}$ is orientable, 
then $2(L^*)^{\wt\phi^*}$ has a canonical homotopy
class of trivializations as in the proof of \cite[Corollary~5.6]{RealGWsI}.
Such a real orientation on~$(V,\vph)$ thus determines a spin structure on~$V^{\vph}$,
which we call the \sf{the associated spin structure on~$V^{\vph}$}.\\

\noindent
Let $D_{(V,\vph)}$ be a real CR-operator on a real bundle pair~$(V,\vph)$ over~$(\P^1,\tau)$.
By \cite[Lemma~6.37]{Melissa}, a (relative) spin structure on 
the real vector bundle~$V^{\vph}$ over the $\tau$-fixed locus $S^1\!\subset\!\P^1$
and a choice of a disk $\D\!\subset\!\P^1$ determine an orientation on~$\det D_{(V,\vph)}$.
If $(V,\vph)$ admits a real orientation, then $\deg V\!\in\!2\Z$ and 
this orientation does not depend on the choice of~$\D$.
Since the kernel of the surjective real CR-operator~$\dbar_{(\P^1,\tau)}$ consists 
of the constant $\R$-valued functions on~$\P^1$,
$\det\dbar_{(\P^1,\tau)}$ is canonically oriented.
A homotopy class of isomorphisms as in~\eref{reldetdfn_e0} thus also determines
an orientation on~$\det D_{(V,\vph)}$.
Any complex vector bundle~$L$ over~$(\P^1,\tau)$ admits a conjugation~$\wt\tau$
lifting~$\tau$.
The real vector bundle $L^{\wt\tau}$ is orientable if and only if $\deg L\!\in\!2\Z$.

\begin{crl}\label{RelSpinOrient_crl}
Suppose $(V,\vph)$ is a real bundle pair over~$(\P^1,\tau)$,
$(L,[\psi],\fs)$ is a real orientation for~$(V,\vph)$, and $D_{(V,\vph)}$ is a real CR-operator.
The orientations on~$\det D_{(V,\vph)}$ induced by the associated relative spin structure 
on~$V^{\vph}$ and by~$(L,[\psi],\fs)$ via~\eref{ROrientCompl_e2} 
are the same if and only if $\deg V\!\in\!4\Z$.
If in addition $\wt\tau$ is a conjugation on~$L$ lifting~$\tau$,
the former orientation and the orientation induced  by~$(L,[\psi],\fs)$ 
via~\eref{ROrientCompl_e3} with $\wt\si\!=\!\wt\tau$
are the same if and only if \hbox{$\deg V\!\cong\!0,-2$}
\!\!\!$\mod8$.
If $\deg V\!\in\!4\Z$, the orientation on~$\det D_{(V,\vph)}$ induced 
by the associated spin structure and the latter orientation are the same.
\end{crl}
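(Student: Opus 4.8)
\noindent\emph{Strategy.}
Fix the real orientation $(L,[\psi],\fs)$ and set $d\!=\!\deg L$; by the second condition in~\eref{realorient_e0} together with the identity $\deg(\tau^*\ov{L})\!=\!\deg L$ over~$\P^1$, one has $\deg V\!=\!2d$. The plan is to prove claim~(1) by identifying the discrepancy between the two orientations of~$\det D_{(V,\vph)}$ as the sign~$(-1)^d$, to deduce claim~(2) from claim~(1) and Lemma~\ref{CvsCanorient_lmm}, and to obtain claim~(3) by the argument proving \cite[Corollary~5.6]{RealGWsI}, which carries over to the present notion of real orientation.

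\smallskip
\noindent\emph{Proof of claim~(1).}
Both orientations of~$\det D_{(V,\vph)}$ vary continuously with the real CR-operator~$D_{(V,\vph)}$: the one built from~\eref{ROrientCompl_e2} by construction, and the one built from a relative spin structure on~$V^{\vph}$ by \cite[Lemma~6.37]{Melissa} together with the continuity of that orientation in the operator. Hence we may take $D_{(V,\vph)}$ to be the operator of a holomorphic structure on~$(V,\vph)$. For such operators, both orienting recipes are additive under direct sums of real bundle pairs over~$(\P^1,\tau)$ with the \emph{same} combinatorial sign $(-1)^{(\ind D_{(V_1,\vph_1)})(\ind D_{(V_2,\vph_2)})}$: for the~\eref{ROrientCompl_e2}-orientation this is the direct-sum analogue of Corollary~\ref{unionorient_crl}, proved as there from the \emph{Exact Squares} property of \cite{detLB} and the evenness of the indices of $D_{(L^*\oplus\tau^*\ov{L}^*,\tau_{L^*}^{\oplus})}$ and $2\dbar_{(\P^1,\tau)}$; for the relative-spin orientation it is the additivity of the orientations of \cite[Lemma~6.37]{Melissa} under block-diagonal boundary conditions, cf.~\cite[Chapter~8]{FOOO}. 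The discrepancy $\eps(V,\vph)\!\in\!\{\pm1\}$ between the two orientations is therefore multiplicative under direct sums; since it equals~$1$ on the trivial rank-one real bundle pair (a direct check), it is a stable invariant, and as a real bundle pair over~$(\P^1,\tau)$ with $V^{\vph}$ orientable has even degree and is stably isomorphic --- as such --- to a direct sum of~$d$ copies of $(\cO_{\P^1}(2),\wt\tau_2)$, we obtain $\eps(V,\vph)\!=\!\eps(\cO_{\P^1}(2),\wt\tau_2)^{d}$. Finally, for $(\cO_{\P^1}(2),\wt\tau_2)$ one has $L\!=\!\cO_{\P^1}(1)$, and the relative spin structure on $\cO_{\P^1}(2)^{\wt\tau_2}$ determined by $(L,[\psi],\fs)$ is related to the homotopy class~\eref{RBPisom_e2} through \cite[Theorems~1.1,~1.2]{RBP}; unwinding this relation over $S^1\!\subset\!\P^1$ gives $\eps(\cO_{\P^1}(2),\wt\tau_2)\!=\!-1$. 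Thus $\eps(V,\vph)\!=\!(-1)^d$, i.e.~the two orientations agree if and only if $\deg V\!\in\!4\Z$.

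\smallskip
\noindent\emph{Proof of claims~(2) and~(3).}
Since~\eref{RBPisom_e3} is the composite of $\id_V\!\oplus\!\Phi_{L^*}$ with~\eref{RBPisom_e2}, passing from recipe~\eref{ROrientCompl_e2} to recipe~\eref{ROrientCompl_e3} replaces $D_{(L^*\oplus\tau^*\ov{L}^*,\tau_{L^*}^{\oplus})}$ with its projection orientation by $D_{2(L^*,\wt\tau^*)}$ with its canonical orientation through the isomorphism of determinant lines induced by~$\Phi_{L^*}$. By Lemma~\ref{CvsCanorient_lmm} with $\rk_{\C}L^*\!=\!1$, $g\!=\!0$, and $\deg L^*\!=\!-d$, that isomorphism respects these two orientations if and only if $\binom{d}{2}\!\in\!2\Z$, i.e.~$d\!\equiv\!0,1\pmod4$, i.e.~$\deg V\!\equiv\!0,2\pmod8$. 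Combining this with claim~(1) (agreement with the relative-spin orientation exactly for $\deg V\!\equiv\!0,4\pmod8$) shows that the relative-spin orientation and the~\eref{ROrientCompl_e3}-orientation agree precisely when $\deg V\!\equiv\!0$ or $-2\pmod8$, which is claim~(2). For claim~(3), if $\deg V\!\in\!4\Z$ then~$d$ is even, $L^{\wt\tau}\!\lra\!S^1$ is orientable, $2(L^*)^{\wt\tau^*}$ carries the canonical homotopy class of trivializations of \cite[Corollary~5.6]{RealGWsI}, and the restriction of~\eref{RBPisom_e3} over~$S^1$ against this trivialization is a stable trivialization of~$V^{\vph}$ representing the associated spin structure; the orientation \cite[Lemma~6.37]{Melissa} assigns to that spin structure coincides with the~\eref{ROrientCompl_e3}-orientation by the argument of \cite[Corollary~5.6]{RealGWsI}.

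\smallskip
\noindent\emph{Main obstacle.}
The technical heart is the base-case identity $\eps(\cO_{\P^1}(2),\wt\tau_2)\!=\!-1$: establishing it requires reconciling the orientation conventions of \cite{Melissa} --- obtained by cutting~$\P^1$ along~$S^1$ and trivializing over a disk --- with the determinant-line-bundle conventions of \cite{detLB} in force throughout, and keeping track of the winding number by which the projection isomorphism $(L^*\!\oplus\!\tau^*\ov{L}^*)^{\tau_{L^*}^{\oplus}}|_{S^1}\!\to\!L^*|_{S^1}$ twists~\eref{RBPisom_e2} relative to the stable trivialization coming from~$\fs$. Checking that the \cite[Lemma~6.37]{Melissa}-orientation is additive under direct sums with the same sign as the~\eref{ROrientCompl_e2}-orientation, which underlies the multiplicativity of~$\eps$, is the other delicate point.
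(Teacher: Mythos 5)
Your logical scheme is internally consistent and your uses of Lemma~\ref{CvsCanorient_lmm} are correct: with $g=0$ and $\deg L^*=-(\deg V)/2$ the lemma gives that the~\eref{ROrientCompl_e2}- and~\eref{ROrientCompl_e3}-orientations agree iff $\deg V\equiv0,2\pmod8$, and combining this with your claim~(1) ($\deg V\equiv0,4\pmod8$) does indeed give claim~(2) ($\deg V\equiv0,6\pmod8$). But note that your direction of derivation is the \emph{reverse} of the paper's: the paper establishes claims~(2) and~(3) by citing \cite[Lemmas~3.3--3.4, Corollary~3.6]{RealGWsII} applied with $L$ replaced by~$L^*$, and then obtains claim~(1) from claim~(2) via Lemma~\ref{CvsCanorient_lmm}. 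Those cited results are exactly the ``technical computations'' you are trying to route around by proving claim~(1) first.

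The result is that your proof of claim~(1), and therefore the whole argument, has a genuine gap at precisely the point you flag. First, the base-case sign $\eps(\cO_{\P^1}(2),\wt\tau_2)=-1$ is asserted, not computed; it is equivalent to the content of \cite[Lemma~3.4, Corollary~3.6]{RealGWsII} (for degree~$2$), i.e.\ you have not eliminated that computation, only relabeled it as a ``base case.'' Second, the multiplicativity of~$\eps$ under direct sums is not automatic: the data $(L,[\psi],\fs)$ of a real orientation for $(V_1\oplus V_2,\vph_1\oplus\vph_2)$ has $\La_{\C}^{\top}V=L\otimes\phi^*\ov L$, which does not split as $L_1\otimes L_2$ in any canonical way; similarly the spin structure~$\fs$ on $V^{\vph}\oplus L^*|_{X^{\phi}}$ does not restrict to spin structures on the $V_i^{\vph_i}\oplus L_i^*|$. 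So the two orienting recipes are not manifestly ``additive with the same combinatorial sign'' -- making them compatible under a splitting requires exactly the sort of careful bookkeeping the paper performs in the direct-sum lemmas of Section~\ref{DistOrient_subs}, and you would also need to verify that the \cite[Lemma~6.37]{Melissa} relative-spin orientation is additive with the \emph{same} sign rule, which is not in the paper and not checked here. Third, for claim~(3) you invoke ``the argument of \cite[Corollary~5.6]{RealGWsI}'' where the paper cites \cite[Lemma~3.3]{RealGWsII} (a precise statement); the former is plausible but not a proof. In short, the decomposition of the problem is fine, but the hard technical content has been relocated rather than supplied, and the multiplicativity step needs a real argument.
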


\begin{proof} The third claim follows from \cite[Lemma~3.3]{RealGWsII} with replaced by~$L^*$.
The  \hbox{$\deg V\!\in\!4\Z$} and \hbox{$\deg V\!-\!2\!\in\!4\Z$} cases
of the second claim follow from
\cite[Lemma~3.4]{RealGWsII} and \cite[Corollary~3.6]{RealGWsII}, respectively,
with $L$ replaced by~$L^*$ and thus with $\deg L$ replaced by $-(\deg V)/2$;
they are established via technical computations.
The first claim follows from the second and Lemma~\ref{CvsCanorient_lmm}
with~$L$ replaced by~$L^*$.
\end{proof}

\subsection{Orientations of moduli spaces}
\label{MSorient_subs1}

\noindent
Let $g,\ell\!\in\!\Z$ with $g\!+\!\ell\!\ge\!2$.
Denote~by $\R\cM_{g,\ell}^{\bu}\!\subset\!\R\ov\cM_{g,\ell}^{\bu}$ the subspace of smooth curves.
The determinants of the operators~$\dbar_{(\Si,\si)}$
on symmetric Riemann surfaces form a real line orbi-bundle~$\det\dbar_{\C}$
over~$\R\ov\cM_{g,\ell}^{\bu}$.
By \cite[Propositions~5.9,6.1]{RealGWsI}, the real line orbi-bundle
\BE{RcMisom_e} \La^{\top}_{\R}\big(T(\R\ov\cM_{g,\ell}^{\bu})\!\big)\otimes
\big(\!\det\dbar_{\C}\big)\lra\R\ov\cM_{g,\ell}^{\bu}\EE
has a natural orientation.
The construction of this orientation over the equivalence class
\hbox{$[\cC]\!\in\!\R\cM_{g,\ell}^{\bu}$}
of a smooth marked curve~$\cC$ 
with the underlying symmetric Riemann surface~$(\Si,\si,\fJ)$ involves:
\begin{enumerate}[label=($\cM\arabic*$),leftmargin=*]

\item\label{LSDolb_it} the Kodaira-Spencer and Dolbeault isomorphisms that 
identify $T_{[\cC]}(\R\ov\cM_{g,\ell}^{\bu})$ with the cokernel $H^1(\Si;T\cC)^{\vph}$ of 
an injective real CR-operator on a real line bundle pair~$(T\cC,\vph)$ over~$(\Si,\si)$;

\item\label{SD_it} the Serre Duality that identifies $H^1(\Si;T\cC)^{\vph}$
with the dual $(H^0(\Si;T\cC^*\!\otimes\!T^*\Si)^{\vph*\otimes\nd\si^*})^*$
of the kernel
of a surjective real CR-operator $\dbar_{(T\cC^*\otimes T^*\Si,\vph*\otimes\nd\si^*)}$
on the real bundle pair \hbox{$(T\cC^*\!\otimes\!T^*\Si,\vph^*\!\otimes\!\nd\si^*)$}
over~$(\Si,\si)$;

\item\label{ses_it} an exact triple of Fredholm operators that identifies
$\det\dbar_{(T\cC^*\otimes T^*\Si,\vph*\otimes\nd\si^*)}$ with 
the tensor product of $\det\dbar_{(T^*\Si^{\otimes2},\nd\si^{*\otimes2})}$
and the top exterior power of a complex vector space and thus 
determines an orientation on the tensor product of the two determinant lines;

\item\label{ROrientStab_it} a homotopy class of isomorphisms 
$\det\dbar_{(T^*\Si^{\otimes2},\nd\si^{*\otimes2})}\!\approx\!\det\dbar_{(\Si,\si)}$
obtained via~\eref{ROrientCompl_e3} from 
a canonical real orientation on $(T^*\Si^{\otimes2},\nd\si^{*\otimes2})$
with $L\!=\!T^*\Si$ and  involution~$\nd\si^*$;

\item\label{ROrientTwist_it} twisting the resulting homotopy class of isomorphisms 
$\La^{\top}_{\R}(T_{[\cC]}(\R\ov\cM_{g,\ell}^{\bu})\!)\!\approx\!\det\dbar_{(\Si,\si)}$
by $(-1)^{(g-1)+|\si|}$, where $|\si|$ is the number of connected components of~$\Si^{\si}$.

\end{enumerate}
The parity of each of the two summands in the twisting exponent in~\ref{ROrientTwist_it}
is invariant under disjoint unions of symmetric surfaces and under 
the first node-identifying immersion in~\eref{Riogldfn_e2} below.
This exponent is even if the domain is a doublet as in~\eref{Dbldfn_e} or~$(\P^1,\tau)$.
Thus, the twisting in~\ref{ROrientTwist_it} can be ignored for the purposes
of Lemma~\ref{unionorientDM_lmm} and 
Propositions~\ref{unionorient_prp}-\ref{Rnodisom_prp}  and~\ref{RelSpinOrient_prp} below.\\

\noindent
Let $(X,\om,\phi)$ be a real symplectic manifold of real dimension~$2n$ and 
$J\!\in\!\cJ_{\om}^{\phi}$.
We denote~by 
$$\fM_{g,\ell}^{\phi;\star}(B;J)\subset\ov\fM_{g,\ell}^{\phi;\bu}(B;J)$$
the subspace of maps from smooth stable domains and by 
$$\ff\!:\fM_{g,\ell}^{\phi;\star}(B;J)\lra\R\cM_{g,\ell}^{\bu}$$
the forgetful morphism dropping the map component.
The determinants of the linearizations of the $\dbar_J$-operators on 
the space of real maps from symmetric surfaces to~$(X,\phi)$   
form a real line orbi-bundle $\det D_{(TX,\nd\phi)}$ 
over $\fM_{g,\ell}^{\phi;\star}(B;J)$; see \cite[Section~4.3]{RealGWsI}.
The forgetful morphism~$\ff$ induces an isomorphism
\BE{RfMisom_e}\La_{\R}^{\top}\big(T\fM_{g,\ell}^{\phi;\star}(B;J)\!\big)
\approx \big(\!\det D_{(TX,\nd\phi)}\big)\!\otimes\!
\ff^*\!\big(\La^{\top}_{\R}(T\R\cM_{g,\ell}^{\bu})\!\big)\EE
of real line orbi-bundles over $\fM_{g,\ell}^{\phi;\star}(B;J)$;
the line orbi-bundle on the left-hand side is the top exterior power of 
the virtual tangent bundle for the moduli space $\fM_{g,\ell}^{\phi;\star}(B;J)$.
A real orientation~$(L,[\psi],\fs)$ on~$(X,\om,\phi)$ determines a homotopy class of isomorphisms
\BE{ROisom_e}  \det D_{(TX,\nd\phi)}\approx  
\ff^*\!\big(\!(\det\dbar_{\C})^{\otimes n}\big)\EE
of real line orbi-bundles over $\fM_{g,\ell}^{\phi;\star}(B;J)$ via~\eref{ROrientCompl_e2},
with $(L,[\psi],\fs)$ replaced by $u^*(L,[\psi],\fs)$ for each stable map~$u$ 
representing an element of $\fM_{g,\ell}^{\phi;\star}(B;J)$.
If in addition $\wt\phi$ is a conjugation on~$L$ lifting~$\phi$,
a homotopy class of isomorphisms~\eref{ROisom_e} is similarly determined 
via~\eref{ROrientCompl_e3}.\\

\noindent
Combining \eref{RcMisom_e}-\eref{ROisom_e}, we obtain a homotopy class of isomorphisms
\BE{RMBvfc_e} 
\La_{\R}^{\top}\big(T\fM_{g,\ell}^{\phi;\star}(B;J)\!\big)
\approx \ff^*\!\big(\!(\det\dbar_{\C})^{\otimes(n+1)}\big)\EE
of real line orbi-bundles over $\fM_{g,\ell}^{\phi;\star}(B;J)$.
If $n\!\not\in\!2\Z$, we can break the right-hand side into $(n\!+\!1)/2$ 
products of $\ff^*(\!(\det\dbar_{\C})^{\otimes2})$, each of which is canonically oriented,
and thus obtain an orientation on~$\fM_{g,\ell}^{\phi;\star}(B;J)$.
Via the stabilization of the domain as in the proof of \cite[Corollary~5.10]{RealGWsI},
this orientation extends over the subspace
$$\fM_{g,\ell}^{\phi;\bu}(B;J)\subset\ov\fM_{g,\ell}^{\phi;\bu}(B;J)$$
of all maps from smooth domains.
By the proof of \cite[Theorem~1.3]{RealGWsI}, the extended orientation
further extends over the rest of~$\ov\fM_{g,\ell}^{\phi;\bu}(B;J)$.\\

\noindent
For $g_1,g_2\!\in\!\Z$ and $\ell_1,\ell_2\!\in\!\Z^{\ge0}$, let
$$\io_{\sqcup}\!:\R\ov\cM_{g_1,\ell_1}^{\bu}\!\times\!\R\ov\cM_{g_2,\ell_2}^{\bu}
\lra \R\ov\cM_{g_1+g_2-1,\ell_1+\ell_2}^{\bu}$$
be the open embedding obtained by taking the disjoint union of two marked curves
and re-ordering their conjugate pairs of marked points in some fixed way.
If $[\u_i]\!\in\!\ov\fM_{g_i,\ell_i}^{\phi;\bu}(B_i;J)$ for $i\!=\!1,2$, we similarly define
$$\big[\u_1\!\sqcup\!\u_2\big]\in \ov\fM_{g_1+g_2-1,\ell_1+\ell_2}^{\phi;\bu}(B_1\!+\!B_2;J)$$
to be the equivalence class of the stable map obtained by taking the disjoint union of the two maps
and ordering the conjugate pairs of marked points of the two maps in some fixed way.

\begin{lmm}\label{unionorientDM_lmm}
Suppose $g_1,g_2\!\in\!\Z$ and $\ell_1,\ell_2\!\in\!\Z^{\ge0}$.
Let
$$\pi_1,\pi_2\!:\R\ov\cM_{g_1,\ell_1}^{\bu}\!\times\!\R\ov\cM_{g_2,\ell_2}^{\bu}
\lra\R\ov\cM_{g_1,\ell_1}^{\bu},\R\ov\cM_{g_2,\ell_2}^{\bu}$$
be the component projections.
The diagram 
\BE{unionorientDM_e0}\begin{split}
\xymatrix{\io_{\sqcup}^*\La^{\top}_{\R}\big(T(\R\ov\cM_{g_1+g_2-1,\ell_1+\ell_2}^{\bu})\!\big)
\ar[rr]^<<<<<<<<<<<<<<<<<<<<{\eref{RcMisom_e}}_<<<<<<<<<<<<<<<<<<<<{\approx} 
\ar[d]^{\approx}_{\La^{\top}_{\R}(\nd\pi_1,\nd\pi_2)}&& 
\io_{\sqcup}^*\det\dbar_{\C}\ar[d]_{\approx}^{\eref{Runionorient_e0}}\\
\pi_1^*\La^{\top}_{\R}\big(T(\R\ov\cM_{g_1,\ell_1}^{\bu})\!\big)
\!\otimes\!\pi_2^*\La^{\top}_{\R}\big(T(\R\ov\cM_{g_2,\ell_2}^{\bu})\!\big)
\ar[rr]^<<<<<<<<<<{\eref{RcMisom_e}}_<<<<<<<<<<{\approx}&&
\pi_1^*\det\!\big(\dbar_{\C}\big)\!\otimes\!\pi_2^*\det\!\big(\dbar_{\C}\big)\,,}
\end{split}\EE
with the homotopy classes of the horizontal isomorphisms determined by 
the orientations of the line orbi-bundles~\eref{RcMisom_e}, commutes.
\end{lmm}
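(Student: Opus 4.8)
The plan is to verify~\eref{unionorientDM_e0} by tracing the construction~\ref{LSDolb_it}-\ref{ROrientTwist_it} of the natural orientation of the line orbi-bundle~\eref{RcMisom_e} over the locus of disjoint unions and checking that each step is compatible with the disjoint-union isomorphism~\eref{Runionorient_e0} on the relevant determinant lines and with the tangent-space splitting $\La^\top_\R(\nd\pi_1,\nd\pi_2)$. Concretely, if $\cC\!=\!\cC_1\!\sqcup\!\cC_2$ is a curve with underlying symmetric surface $(\Si,\si)\!=\!(\Si_1\!\sqcup\!\Si_2,\si_1\!\sqcup\!\si_2)$, I would show that the homotopy class of isomorphisms $\La^\top_\R(T_{[\cC]}\R\ov\cM^\bu)\!\approx\!\det\dbar_{(\Si,\si)}$ produced by~\ref{LSDolb_it}-\ref{ROrientTwist_it} matches the tensor product of the corresponding homotopy classes for $\cC_1$ and~$\cC_2$.

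Steps~\ref{LSDolb_it}-\ref{ses_it} are essentially formal: the Kodaira-Spencer, Dolbeault, and Serre-duality identifications are direct sums of the corresponding identifications for $(\Si_1,\si_1)$ and $(\Si_2,\si_2)$, since the cohomology of the line bundles involved and the kernels and cokernels of the relevant CR-operators are local to the components, and the exact triple of~\ref{ses_it} for $(\Si,\si)$ is the direct sum of the exact triples for the two components. By the \textit{Direct Sum} and \textit{Exact Squares} properties of determinant line bundles, these steps therefore commute with~\eref{Runionorient_e0} up to Koszul reordering signs, each of which is a product of two operator indices. Here the correction space appearing in~\ref{ses_it} is the sum of the fibers of $T\cC^*\!\otimes\!T^*\Si$ over the conjugate pairs of marked points and so has even real dimension, which kills the sign attached to it; and since $D_{(T\cC,\vph)}$ is injective while $D_{(T\cC^*\otimes T^*\Si,\ldots)}$ is surjective, several of the \textit{Direct Sum} signs are trivial as well.

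For step~\ref{ROrientStab_it} I would invoke Corollary~\ref{unionorient_crl}, applied to the rank-$1$ real bundle pair $(T^*\Si^{\otimes2},\nd\si^{*\otimes2})$ with its canonical real orientation given by $L\!=\!T^*\Si$ and involution $\nd\si^*$: its second claim shows that the homotopy class of isomorphisms $\det\dbar_{(T^*\Si^{\otimes2},\nd\si^{*\otimes2})}\!\approx\!\det\dbar_{(\Si,\si)}$ obtained via~\eref{ROrientCompl_e3} commutes with~\eref{Runionorient_e0} over the locus of disjoint unions provided $(g_1\!-\!1)(g_2\!-\!1)\!+\!(g_1\!-\!1\!+\!\deg L_1)(g_2\!-\!1\!+\!\deg L_2)\!\in\!2\Z$; since $\deg L_i\!=\!\deg(T^*\Si_i)\!=\!2g_i\!-\!2$ is even, this quantity reduces to $2(g_1\!-\!1)(g_2\!-\!1)\!\equiv\!0$, so the compatibility holds unconditionally. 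The twisting factor $(-1)^{(g-1)+|\si|}$ of~\ref{ROrientTwist_it} is multiplicative under disjoint unions, as $g\!-\!1\!=\!(g_1\!-\!1)\!+\!(g_2\!-\!1)$ and $|\si|\!=\!|\si_1|\!+\!|\si_2|$, and so may be disregarded (cf.\ the paragraph following~\ref{ROrientTwist_it}).

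The main obstacle will be the remaining sign bookkeeping: assembling, over the full chain, the Koszul signs from the \textit{Direct Sum} identifications and from the dualization introduced by Serre duality in~\ref{SD_it}, and checking that the total reordering permutation carrying the chain for $\cC_1\!\sqcup\!\cC_2$ to the product of the chains for $\cC_1$ and~$\cC_2$ is even. I would do this by fixing once and for all an order on the tensor factors realizing~\ref{LSDolb_it}-\ref{ROrientTwist_it} and computing that permutation directly; the parities of the indices entering it are determined by $g_1,g_2,\ell_1,\ell_2$, and the computations above — together with the injectivity/surjectivity noted in the second paragraph and the even real dimension of the~\ref{ses_it} correction space — should force the contributions to cancel in pairs. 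In particular, the sign that obstructs the analogous statement for the moduli space of maps in the first claim of Corollary~\ref{unionorient_crl} is, in the present setting, cancelled by the Serre-duality dualization, which is why no parity hypothesis is needed here.
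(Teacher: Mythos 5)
Your proof follows the paper's own route step for step: you factor the horizontal isomorphisms through the chain realizing Steps~\ref{LSDolb_it}--\ref{ROrientTwist_it}, observe that~\ref{LSDolb_it}--\ref{ses_it} commute with disjoint unions, reduce~\ref{ROrientStab_it} to the second claim of Corollary~\ref{unionorient_crl} applied to $(T^*\Si^{\otimes2},\nd\si^{*\otimes2})$ with $L=T^*\Si$, and note that the parity condition collapses because $\deg T^*\Si_i=2g_i-2$ is even. Your observation that the finite-dimensional correction in~\ref{ses_it} is complex (hence of even real index) is precisely what underlies the paper's brief assertion that~\ref{LSDolb_it}--\ref{ses_it} commute with disjoint unions. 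Two small points: the closing paragraph hedges more than necessary -- the sign analysis is already complete by the end of your third paragraph, and there is no remaining ``obstacle''; and its attribution of the cancellation to ``the Serre-duality dualization'' is a bit off: the cancellation occurs in the parity condition of Corollary~\ref{unionorient_crl}(2), and is caused by $\deg T^*\Si_i$ being even (so the two summands there have equal parity), not by a sign produced by the dualization in~\ref{SD_it} itself; Serre duality merely supplies the bundle $T^*\Si$.
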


\begin{proof}
Let $([\cC_1],[\cC_2])\!\in\!\R\ov\cM_{g_1,\ell_1}^{\bu}\!\times\!\R\ov\cM_{g_2,\ell_2}^{\bu}$
and $[\cC]\!=\!\io_{\sqcup}([\cC_1],[\cC_2])$.
It can be assumed that the unmarked domains $(\Si_1,\si_1,\fJ_1)$ and $(\Si_2,\si_2,\fJ_2)$
of~$\cC_1$ and~$\cC_2$, respectively, are smooth and thus so is their disjoint union~$(\Si,\si,\fJ)$.
We denote the top exterior powers of the relevant tangent spaces 
of~$[\cC_i]$ and~$[\cC]$ by~$\cM_i$ and~$\cM$, respectively, and 
the left vertical isomorphism in~\eref{unionorientDM_e0} by~$\La_{\R}^{\top}$.
The horizontal isomorphisms in~\eref{unionorientDM_e0} factor through the horizontal isomorphisms~in
the diagram
$$\xymatrix{\cM\ar[d]^{\approx}_{\La_{\R}^{\top}}
\ar[rr]^{\tn{\ref{LSDolb_it}-\ref{ses_it}}}_{\approx}&& 
\det\dbar_{2(T\Si,\nd\si)}\ar[d]_{\eref{Runionorient_e0}}^{\approx}\ar[r]^{\eref{reldetdfn_e0}}_{\approx}&
\big(\!\det\dbar_{(\Si,\si)}\big)\ar[d]^{\eref{Runionorient_e0}}_{\approx}\\
\cM_1\!\otimes\!\cM_2\ar[rr]^>>>>>>>>>>>{\tn{\ref{LSDolb_it}-\ref{ses_it}}}_>>>>>>>>>>>{\approx}&& 
\big(\!\det\dbar_{2(T\Si_1,\nd\si_1)}\big)\!\otimes\!\big(\!\det\dbar_{2(T\Si_2,\nd\si_2)}\big)
\ar[r]^>>>>>{\eref{reldetdfn_e0}}_>>>>>{\approx}&  
\big(\!\det\dbar_{(\Si_1,\si_1)}\big)\!\otimes\!\big(\!\det\dbar_{(\Si_2,\si_2)}\big)\,.}$$
Since each of Steps~\ref{LSDolb_it}-\ref{ses_it} commutes with disjoint unions,
the left rectangle in this diagram commutes.
By the second claim of Corollary~\ref{unionorient_crl}, the right rectangle 
in this diagram commutes up to homotopy because
$$(g_1\!-\!1)(g_2\!-\!1)\!+\!
\big(g_1\!-\!1\!+\!\deg T\Si_1\big)\!\big(g_2\!-\!1\!+\!\deg T\Si_2\big)\in2\Z.$$
Combining these two statements, we obtain the claim.
\end{proof}

\begin{prp}\label{unionorient_prp}
Suppose $(X,\om,\phi)$, $n$, $J$, and $(L,[\psi],\fs)$ are as in Theorem~\ref{main_thm}
and \hbox{$g_i,\ell_i\!\in\!\Z$},  \hbox{$B_i\!\in\!H_2(X;\Z)$},
and \hbox{$[\u_i]\!\in\!\ov\fM_{g_i,\ell_i}^{\phi;\bu}(B_i;J)$} for $i\!=\!1,2$.
The natural isomorphism
\BE{unionorient_e}T_{[\u_1\sqcup\u_2]}\big(\ov\fM_{g_1+g_2-1,\ell_1+\ell_2}^{\phi;\bu}(B_1\!+\!B_2;J)\!\big)
\approx T_{[\u_1]}\big(\ov\fM_{g_1,\ell_1}^{\phi;\bu}(B_1;J)\!\big)
\!\oplus\!T_{[\u_2]}\big(\ov\fM_{g_2,\ell_2}^{\phi;\bu}(B_2;J)\!\big)\EE
is orientation-preserving with respect to the orientations induced by~$(L,[\psi],\fs)$
if and only~if 
$$(n\!-\!1)(g_1\!-\!1)(g_2\!-\!1)/2\in2\Z.$$
If in addition $\wt\phi$ is a conjugation on~$L$ lifting~$\phi$ and 
the homotopy class of isomorphisms~\eref{ROisom_e} is obtained 
via~\eref{ROrientCompl_e3}, then this isomorphism is orientation-preserving if and only~if
$$(n\!-\!1)(g_1\!-\!1)(g_2\!-\!1)/2\!+\!
\big(g_1\!-\!1\!+\!\lr{c_1(X,\om),B_1}/2\big)\big(g_2\!-\!1\!+\!\lr{c_1(X,\om),B_2}/2\big)\in2\Z\,. $$
\end{prp}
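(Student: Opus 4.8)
The plan is to trace the construction of the orientations through the disjoint-union operation and reduce the assertion to Lemma~\ref{unionorientDM_lmm}, Corollary~\ref{unionorient_crl}, and two reordering computations. Since the sign in question depends only on $n,g_1,g_2$ (and, in the second case, on $\lr{c_1(X,\om),B_i}$), it is locally constant on the relevant product of moduli spaces; by the continuity of the orientations and the compatibility of the orientation-extension procedure of~\cite{RealGWsI} with the embeddings~$\io_{\sqcup}$ (after adding conjugate pairs of marked points if necessary to stabilize the domains), it is enough to work over the locus of maps from smooth stable domains, where the orientation on each $\ov\fM_{g_i,\ell_i}^{\phi;\bu}(B_i;J)$ is the one read off from~\eref{RMBvfc_e} by grouping the $n\!+\!1$ copies of $\det\dbar_{\C}$ into $(n\!+\!1)/2$ canonically oriented pairs; the twisting in~\ref{ROrientTwist_it} is irrelevant here, as already noted. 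The claim then reduces to computing modulo~$2$ the discrepancy of the square comparing $\La_{\R}^{\top}$ of~\eref{unionorient_e} with the isomorphism of $\ff^*((\det\dbar_{\C})^{\otimes(n+1)})$ for the disjoint union obtained by applying~\eref{Runionorient_e0} to each of the $n\!+\!1$ factors of $\det\dbar_{\C}$, together with the discrepancy of this latter isomorphism relative to the canonical orientations.

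I would split~\eref{RMBvfc_e} into its constituents~\eref{RfMisom_e}, \eref{ROisom_e}, \eref{RcMisom_e} and treat them one at a time. The factor $\ff^*(\det\dbar_{\C})$ from~\eref{RcMisom_e} is compatible with the disjoint union with no sign, by Lemma~\ref{unionorientDM_lmm}. The factor $\det D_{(TX,\nd\phi)}$ from~\eref{ROisom_e} is compatible with the disjoint union up to the sign provided by Corollary~\ref{unionorient_crl} applied to $u^*(TX,\nd\phi)$ with the pulled-back real orientation (for which $\deg L_i=\lr{c_1(X,\om),B_i}/2$): this sign is $(-1)^{(g_1-1)(g_2-1)}$ when~\eref{ROisom_e} is obtained via~\eref{ROrientCompl_e2}, and $(-1)^{(g_1-1)(g_2-1)+(g_1-1+\lr{c_1(X,\om),B_1}/2)(g_2-1+\lr{c_1(X,\om),B_2}/2)}$ when it is obtained via~\eref{ROrientCompl_e3}. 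Finally, \eref{RfMisom_e} identifies the natural direct-sum decomposition of the virtual tangent space underlying~\eref{unionorient_e} with the direct-sum decompositions of $\det D_{(TX,\nd\phi)}$ (via~\eref{Runionorient_e0}) and of the Deligne-Mumford tangent spaces, composed with the transposition exchanging the Deligne-Mumford factor of the first surface with the $\det D_{(TX,\nd\phi)}$ factor of the second; the Koszul sign of that transposition is $(-1)^{(\dim\R\cM_{g_1,\ell_1}^{\bu})(\ind D_{(TX,\nd\phi)}|_{\Si_2})}=(-1)^{(g_1-1)(g_2-1)}$, since $\dim\R\cM_{g_1,\ell_1}^{\bu}\!\equiv\!g_1\!-\!1$ and $\ind D_{(TX,\nd\phi)}|_{\Si_2}=(1\!-\!g_2)n+\lr{c_1(X,\om),B_2}\!\equiv\!g_2\!-\!1$ modulo~$2$, the latter because $n\!\not\in\!2\Z$ and $\lr{c_1(X,\om),B_2}\!\in\!2\Z$ by~\eref{RfMdim_e}.

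The remaining point is the interleaved-versus-block comparison for the $n$ copies of $\det\dbar_{\C}$ making up $\det D_{(TX,\nd\phi)}$. Corollary~\ref{unionorient_crl} splits the single operator $n\dbar_{(\Si,\si)}$ rather than the $n$-fold tensor product of $\dbar_{(\Si,\si)}$; unwinding this to $(\det\dbar_{\C})^{\otimes n}$ inserts the shuffle carrying the interleaved ordering $(\Si_1,\Si_2,\Si_1,\Si_2,\ldots)$ to the block ordering $(\Si_1,\ldots,\Si_1,\Si_2,\ldots,\Si_2)$, a product of $\binom{n}{2}$ transpositions each of Koszul sign $(-1)^{(1-g_1)(1-g_2)}$, hence contributing $(-1)^{\binom{n}{2}(g_1-1)(g_2-1)}$. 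No further sign comes from the pairings defining the canonical orientations, since the canonical orientation of $(\det\dbar_{\C})^{\otimes m}$ with $m$ even is represented by $v^{\otimes m}$ for any local generator~$v$ of $\det\dbar_{\C}$, regardless of how the $m$ factors are paired. Multiplying these contributions, in the~\eref{ROrientCompl_e2} case the total is $(-1)^{(1+1+\binom{n}{2})(g_1-1)(g_2-1)}=(-1)^{\binom{n}{2}(g_1-1)(g_2-1)}$; since $\binom{n}{2}=n(n\!-\!1)/2\!\equiv\!(n\!-\!1)/2$ modulo~$2$ for $n\!\not\in\!2\Z$, this equals $(-1)^{(n-1)(g_1-1)(g_2-1)/2}$, giving the first claim. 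In the~\eref{ROrientCompl_e3} case the total acquires the extra factor from Corollary~\ref{unionorient_crl}, yielding $(-1)^{(n-1)(g_1-1)(g_2-1)/2+(g_1-1+\lr{c_1(X,\om),B_1}/2)(g_2-1+\lr{c_1(X,\om),B_2}/2)}$, giving the second claim.

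I expect the main obstacle to be the sign bookkeeping in the last two paragraphs: nailing down the transposition sign hidden in~\eref{RfMisom_e}, and in particular observing that $\ind D_{(TX,\nd\phi)}|_{\Si_i}\!\equiv\!g_i\!-\!1$ modulo~$2$ so that this sign is symmetric in the two factors and free of $\lr{c_1(X,\om),B_i}$, together with carrying the interleaved-to-block shuffle through the argument consistently, so that the shuffle implicit in Corollary~\ref{unionorient_crl} is counted exactly once against the one needed to match up the canonical orientations.
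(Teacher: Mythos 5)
Your proposal is correct and takes essentially the same approach as the paper's proof: reduce to smooth stable domains, split \eref{RMBvfc_e} into its constituents, invoke Lemma~\ref{unionorientDM_lmm} for the Deligne--Mumford factor and Corollary~\ref{unionorient_crl} for the $\det D_{(TX,\nd\phi)}$ factor, and track the Koszul reordering signs. The only differences are bookkeeping: the paper absorbs your transposition and interleave-to-block contributions into a single $\binom{n+1}{2}(\ind\dbar_1)(\ind\dbar_2)$ canonical-orientation comparison (and since $\binom{n+1}{2}\equiv\binom{n}{2}+1\bmod 2$ for $n$ odd, this agrees with your $1+\binom{n}{2}$), and it derives the second claim from the first via Lemma~\ref{CvsCanorient_lmm} rather than re-running the argument with the second part of Corollary~\ref{unionorient_crl}.
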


\begin{proof}
The second claim follows from the first and Lemma~\ref{CvsCanorient_lmm} applied three times.
It can be assumed that the domains $(\Si_1,\si_1,\fJ_1)$ and $(\Si_2,\si_2,\fJ_2)$
of~$[\u_1]$ and~$[\u_2]$, respectively, are smooth and stable with their markings
and thus so is their disjoint union~$(\Si,\si,\fJ)$.
Let $\cC_1,\cC_2,\cC$ be the marked domains of \hbox{$\u_1,\u_2,\u_1\!\sqcup\!\u_2$}, respectively,
and $u_1,u_2,u$ their map components.
We denote the top exterior powers of the relevant tangent spaces 
of~$[\cC_i],[\cC]$ and~$[\u_i],[\u_1\!\sqcup\!\u_2]$ by $\cM_i,\cM$ and $\fM_i,\fM$, respectively, and 
the determinants of the real CR-operators $D_{u_i^*(TX,\nd\phi)},D_{u^*(TX,\nd\phi)}$
and $\dbar_{(\Si_i,\si_i)},\dbar_{(\Si,\si)}$ by $D_i,D$ and $\dbar_i,\dbar$, respectively.\\

\begin{figure}
$$\xymatrix{\fM\ar[dd]\ar[rr]^{\eref{RfMisom_e}}&& 
D\!\otimes\!\cM\ar[d]_{\eref{Runionorient_e0}}\ar[r]^{\eref{reldetdfn_e0}}_{\eref{RcMisom_e}}& 
\dbar^{\otimes n}\!\otimes\!\dbar\ar@{-->}[d]^{\eref{Runionorient_e0}}\\
&& (D_1\!\otimes\!D_2)\!\otimes\!(\cM_1\!\otimes\!\cM_2)\ar[d]
\ar[r]^{\eref{reldetdfn_e0}}_{\eref{RcMisom_e}}& 
\big(\dbar_1^{\otimes n}\!\otimes\!\dbar_2^{\otimes n}\big)
\!\otimes\!\big(\dbar_1\!\otimes\!\dbar_2\big)\ar[d]\\
\fM_1\!\otimes\!\fM_2\ar[rr]^<<<<<<<<{\eref{RfMisom_e}}&& 
(D_1\!\otimes\!\cM_1)\!\otimes\!(D_2\!\otimes\!\cM_2)
\ar[r]^{\eref{reldetdfn_e0}}_{\eref{RcMisom_e}}&
\big(\dbar_1^{\otimes n}\!\otimes\!\dbar_1\big)
\!\otimes\!\big(\dbar_2^{\otimes n}\!\otimes\!\dbar_2\big)}$$
$$\xymatrix{D\ar[d]_{\eref{Runionorient_e0}}\ar[r]^{\eref{reldetdfn_e0}}& 
\dbar^{\otimes n}\ar@{-->}[d]^{\eref{Runionorient_e0}}\\
D_1\!\otimes\!D_2\ar[r]^{\eref{reldetdfn_e0}}&
\dbar_1^{\otimes n}\!\otimes\!\dbar_2^{\otimes n}}$$
\caption{Isomorphisms inducing the orientations of Proposition~\ref{unionorient_prp}} 
\label{unionorient_fig}
\end{figure}

\noindent
The tangent spaces in the statement of the proposition are oriented by the top 
and bottom rows of the top diagram in Figure~\ref{unionorient_fig} from
\begin{enumerate}[label=($\fo\arabic*$),leftmargin=*]

\item the canonical orientations on the $(n\!+\!1)/2$ factors of 
$\dbar^{\otimes2},\dbar_i^{\otimes2}$,

\item the projection (resp.~canonical) orientations on the determinants of real CR-operators
on the real bundle pairs 
$u^*(L^*\!\oplus\!\phi^*\ov{L}^*,\phi_{L^*}^{\oplus}),
u_i^*(L^*\!\oplus\!\phi^*\ov{L}^*,\phi_{L^*}^{\oplus})$ in the first
(resp.~$2u^*(L^*,\wt\phi^*),2u_i^*(L^*,\wt\phi^*)$
in the second) claim of the proposition.

\end{enumerate}
The vertical isomorphisms at the bottom of this diagram interchange the middle factors
in their domains and multiply the result by~$(-1)$ to the power of the index 
of~$D_2$ times the dimension of~$\cM_1$.
Since $n\!\not\in\!2\Z$ and $\lr{c_1(X,\om),B_2}\!\in\!2\Z$,
\begin{equation*}\begin{split}
\big(\ind\,D_2\big)\big(\!\dim\cM_1\big)
&\cong(g_1\!-\!1)(g_2\!-\!1) \\
&\cong\big(\ind\,n\dbar_{(\Si_2,\si_2)}\big)\!\big(\ind\,\dbar_1\big)\mod2
\end{split}\end{equation*}
The two rectangles with solid arrows in this diagram commute.\\

\noindent
The isomorphisms in the upper right rectangle in the top diagram are the tensor products
of the isomorphisms in the bottom diagram and in~\eref{unionorientDM_e0}.
For the purposes of~\eref{Runionorient_e0}, 
$\dbar^{\otimes n}$ is viewed as the determinant of the CR-operator $n\dbar_{(\Si,\si)}$
on~$(\Si\!\times\!\C^n,\si\!\times\!\fc)$ in these diagrams. 
By Corollary~\ref{unionorient_crl}, the bottom diagram would commute up to homotopy 
if the dashed arrow were multiplied by~$(-1)$ to the power~of 
\BE{unionorient_e37}
(g_1\!-\!1)(g_2\!-\!1)  \quad\Big(\hbox{resp.~}
(g_1\!-\!1)(g_2\!-\!1)\!+\!
\big(g_1\!-\!1\!+\!\lr{c_1(L),B_1}\!\big)\!\big(g_2\!-\!1\!+\!\lr{c_1(L),B_2}\!\big)\!\Big)\EE
in the first (resp.~second) claim of the proposition.
Along with Lemma~\ref{unionorientDM_lmm}, this implies that the upper right rectangle in the top diagram 
would commute up to homotopy if the dashed arrow were multiplied by~$(-1)$ 
to the power of~\eref{unionorient_e37}.\\

\noindent
The pullback of the canonical orientation on $\dbar_1^{\otimes(n+1)}\!\otimes\!\dbar_2^{\otimes(n+1)}$
by the bottom right vertical arrow in the top diagram (with the specified sign)
and the top right vertical arrow (without any sign) differs from 
the canonical orientation on $\dbar^{\otimes(n+1)}$ by~$(-1)$ to the power~of
$$\binom{n\!+\!1}{2}\big(\ind\,\dbar_1\big)\big(\ind\,\dbar_2\big)
\cong (n\!+\!1)(g_1\!-\!1)(g_2\!-\!1)/2 \mod2.$$
Combining this with the correcting sign~\eref{unionorient_e37} for 
the top right vertical arrow, we obtain the claim.
\end{proof}

\noindent
For $g\!\in\!\Z$, $\ell\!\in\!\Z^{\ge0}$, \hbox{$B\!\in\!H_2(X;\Z)$}, and $J\!\in\!\cJ_{\om}$,
we denote by $\ov\fM_{g,\ell}^{\bu}(B;J)$
the moduli space of stable $J$-holomorphic degree~$B$ maps 
from closed, possibly nodal and disconnected, Riemann surfaces of arithmetic genus~$g$
with $\ell$~marked points.
If in addition \hbox{$[\ell]\!=\!S^+\!\sqcup\!S^-$}, let
$$\io_{g;S^+,S^-}\!:  
\ov\fM_{g,\ell}^{\bu}(B;J)\lra \ov\fM_{2g-1,\ell}^{\phi;\bu}(B\!-\!\phi_*(B);J\big)$$
be the open immersion sending the equivalence class of a stable map~$\u$ 
with the domain~$(\Si,\fj)$ and map component~$u$ to the equivalence class of 
the map
$$u\!\sqcup\!\{\phi\!\circ\!u\}\!:
\big(\Si\!\sqcup\!\Si,\fj\!\sqcup\!(-\fj)\!\big)\lra(X,J)$$
with the involution $\si$ interchanging the two copies of~$\Si$
and the first copy of~$\Si$ carrying the marked points~$z_i^+$ with $i\!\in\!S^+$
and $z_i^-$ with $i\!\in\!S^-$ at the positions of the corresponding marked points~$z_i$
of~$\u$.

\begin{prp}\label{Dblorient_prp}
Suppose $(X,\om,\phi)$, $J$, $(L,[\psi],\fs)$, $g$, $\ell$, and $B$ are as in Theorem~\ref{main_thm},
\hbox{$[\ell]\!=\!S^+\!\sqcup\!S^-$}, and $[\u]\!\in\!\ov\fM_{g,\ell}^{\bu}(B;J)$.
The isomorphism
$$\nd_{[\u]}\io_{g;S^+,S^-}\!:
T_{[\u]}\big(\ov\fM_{g,\ell}^{\bu}(B;J)\!\big)\lra
T_{\io_{g;S^+,S^-}([\u])}\ov\fM_{2g-1,\ell}^{\phi;\bu}\big(B\!-\!\phi_*(B);J\big)$$
is orientation-preserving with respect to the orientation on the target induced
by $(L,[\psi],\fs)$ via~\eref{ROrientCompl_e2} and the complex orientation
on the domain if and only if $\lr{c_1(L),\phi_*(B)}\!+\!|S^-|\!\in\!2\Z$.
If in addition $\wt\phi$ is a conjugation on~$L$ lifting~$\phi$ and 
the homotopy class of isomorphisms~\eref{ROisom_e} is obtained 
via~\eref{ROrientCompl_e3}, then this isomorphism is orientation-preserving if and only~if
\hbox{$(g\!-\!1)\!+\!|S^-|\!\in\!2\Z$}.
\end{prp}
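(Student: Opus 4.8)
The plan is to follow the scheme of the proof of Proposition~\ref{unionorient_prp}, replacing the disjoint-union comparisons there by the doubling/restriction comparisons of Lemma~\ref{Dblorient_lmm} and Corollary~\ref{Dblorient_crl}. As a first reduction, it can be assumed that the domain $(\Si,\fj)$ of~$[\u]$ is smooth and stable with its markings; the doubled domain $(\hat\Si,\hat\si,\hat\fj)=(\Si\sqcup\Si,\si,\fj\sqcup(-\fj))$ of $\io_{g;S^+,S^-}([\u])$ is then also smooth and stable, and the general case follows by the stabilization argument used in the proofs in~\cite{RealGWsI} and in Proposition~\ref{unionorient_prp}. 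As in that proposition, the second claim will be deduced from the first by applying the $\rk_\C L\!=\!1$ case of Lemma~\ref{CvsCanorient_lmm} to the real line bundle pairs $u^*(L^*,\wt\phi^*)$ and $\hat\Si\!\times\!\C$ over~$(\hat\Si,\hat\si)$; alternatively one may carry both claims in parallel through the diagram below, exactly as the ``resp.''\ versions are handled simultaneously in Figure~\ref{unionorient_fig}.

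The core of the argument is a commutative diagram analogous to Figure~\ref{unionorient_fig}. Set $u'=u\sqcup\{\phi\circ u\}$; its domain is~$\hat\Si$, its restriction to the first copy $\Si_1=(\Si,\fj)$ is~$u$, and its restriction to $\Si_2=(\Si,-\fj)$ is the $(-\fj)$-holomorphic map $\phi\circ u$. The orientation on $T_{\io_{g;S^+,S^-}([\u])}\ov\fM_{2g-1,\ell}^{\phi;\bu}(B\!-\!\phi_*(B);J)$ is obtained by composing~\eref{RfMisom_e}, the homotopy class~\eref{ROisom_e} (produced from~\eref{ROrientCompl_e2}, resp.~\eref{ROrientCompl_e3}), the orientation~\eref{RcMisom_e} of the Deligne--Mumford factor, and the canonical orientations of the $(n\!+\!1)/2$ copies of $(\det\dbar_{(\hat\Si,\hat\si)})^{\otimes2}$; each of these isomorphisms is restricted to~$\Si_1$ via the isomorphisms~\eref{Dblorient_e}, and the outcome is compared with the complex orientation of $T_{[\u]}\ov\fM_{g,\ell}^\bu(B;J)$ assembled from the analogous complex data.

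The sign bookkeeping has two main inputs. For the map component, Corollary~\ref{Dblorient_crl} applied with $(V,\vph)=u'^*(TX,\nd\phi)$ and real orientation $u'^*(L,[\psi],\fs)$ shows that the restriction to~$\Si_1$ of the orientation on $\det D_{(TX,\nd\phi)}$ induced by~\eref{ROrientCompl_e2} differs from the complex orientation on $\det D_{u^*TX}$ by $(-1)^{(g-1)+\deg L_2}$, where $\deg L_2=\deg\big((\phi\circ u)^*L\big)=-\langle c_1(L),\phi_*(B)\rangle\equiv\langle c_1(L),\phi_*(B)\rangle\pmod2$ (the minus sign reflecting the reversed complex structure on~$\Si_2$); the orientation induced by~\eref{ROrientCompl_e3} restricts to the complex one. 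For the Deligne--Mumford component, one tracks steps~\ref{LSDolb_it}--\ref{ROrientTwist_it} under restriction to~$\Si_1$: steps~\ref{LSDolb_it}--\ref{SD_it} commute with restriction, as in the proof of Lemma~\ref{unionorientDM_lmm}; step~\ref{ROrientStab_it}, which invokes~\eref{ROrientCompl_e3} with $L=T^*\hat\Si$, contributes no sign by the second claim of Corollary~\ref{Dblorient_crl}; the twist of step~\ref{ROrientTwist_it} equals $(-1)^{(2g-2)+|\hat\si|}=+1$ on the doublet; and the exact-triple step~\ref{ses_it} contributes $(-1)^{|S^-|}$, because for each $i\in S^-$ the distinguished point $z_i^+$ of the $i$-th conjugate pair lies on~$\Si_2$, so the complex orientation of the corresponding fiber line is carried to~$\Si_1$ through the anti-linear conjugation~$\nd\hat\si^{*\otimes2}$, reversing it. A careful comparison of the Serre-duality--based orientation of $T\R\ov\cM_{2g-1,\ell}^\bu$ at the doubled curve with the complex orientation of $T\ov\cM_{g,\ell}^\bu$ at~$[\cC]$ then supplies the remaining factor $(-1)^{g-1}$. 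Unlike in Proposition~\ref{unionorient_prp} there is no reordering of tensor factors, since we restrict to a single component of~$\hat\Si$, and the second claim of Lemma~\ref{Dblorient_lmm} with $L=\hat\Si\!\times\!\C$ shows that each canonical pair $(\det\dbar_{(\hat\Si,\hat\si)})^{\otimes2}$ restricts to $(\det\dbar_\Si)^{\otimes2}$ with its canonical orientation. Multiplying the signs gives total discrepancy $(-1)^{(g-1)+\langle c_1(L),\phi_*(B)\rangle}(-1)^{(g-1)+|S^-|}=(-1)^{\langle c_1(L),\phi_*(B)\rangle+|S^-|}$ in the first case and $(-1)^{(g-1)+|S^-|}$ in the second, which are the asserted conditions.

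The step I expect to be the main obstacle is the last part of the Deligne--Mumford bookkeeping: isolating and tracking the genus-dependent sign $(-1)^{g-1}$ produced by the passage from arithmetic genus $2g-1$ to~$g$ in the construction of the orientation~\eref{RcMisom_e}, and checking its compatibility with the $(-1)^{|S^-|}$ arising from the way the conjugate pairs of marked points are distributed over the two copies of~$\Si$. The map-component comparison, by contrast, is immediate from Corollary~\ref{Dblorient_crl}, just as the corresponding step in Proposition~\ref{unionorient_prp} is immediate from Corollary~\ref{unionorient_crl}.
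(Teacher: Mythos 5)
Your proposal is correct and follows essentially the same route as the paper's proof: reduce to a smooth stable domain, use Corollary~\ref{Dblorient_crl} (with $\deg L_2=-\lr{c_1(L),\phi_*(B)}$) to compare the $\det D_{(TX,\nd\phi)}$ factor, and account for the $(-1)^{(g-1)+|S^-|}$ discrepancy in the Deligne--Mumford factor, which is \cite[Lemma~3.2]{RealGWsII}. One expository wrinkle: you first assert that Steps~\ref{LSDolb_it}--\ref{SD_it} ``commute with restriction,'' then later assign a factor of $(-1)^{g-1}$ to the Serre-duality comparison in Step~\ref{SD_it} without deriving it; these statements are in tension, and your derivation of the $(g-1)$ summand stays at the level of an assertion. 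The paper resolves this explicitly: the sign comes from Step~\ref{SD_it} because the two natural complex orientations on the real dual of a finite-dimensional complex vector space agree iff the complex dimension is even, while the $|S^-|$ summand arises jointly from Steps~\ref{ses_it} and~\ref{SD_it} since the complex space in Step~\ref{ses_it} is the sum of cotangent lines at the points $z_i^+$ (subsequently dualized). Your sign bookkeeping and final answer are nevertheless right.
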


\begin{proof} The second claim is \cite[Theorem~1.4]{RealGWsII};
it also follows from the first claim and Lemma~\ref{CvsCanorient_lmm}
with $g$, $\rk_{\C}\!L$, and $\deg L$ replaced by $2g\!-\!1$, 1, and $-\lr{c_1(X,\om),B}$,
respectively.
The first claim stated for certain one-dimensional targets $(X,\om,\phi)$ 
is \cite[Lemma~3.1]{GI}, but its proof is a slight modification of 
the proof of \cite[Theorem~1.4]{RealGWsII} summarized below and applies in general.
For $g\!\in\!\Z$ and $\ell\!\in\!\Z^{\ge0}$, we denote by $\ov\cM_{g,\ell}^{\bu}$ 
the Deligne-Mumford moduli space of stable closed, possibly nodal and disconnected, 
Riemann surfaces~$(\Si,\fj)$ of arithmetic genus~$g$
with $\ell$~marked points and~by
$$\det\dbar_{\C}\lra \ov\cM_{g,\ell}^{\bu}$$
the real line bundle formed by the determinants of the standard $\dbar$-operators
on Riemann surfaces.\\

\noindent
As with Proposition~\ref{unionorient_prp}, it is sufficient to assume that
the domain~$\Si$ of~$[\u]$ is smooth and is stable with its markings.
In both cases, the proof then starts with the isomorphism~\eref{RfMisom_e}.
By \cite[Lemma~3.2]{RealGWsII}, the isomorphism
$$\La^{\top}_{\R}\big(T(\R\cM_{2g-1,\ell}^{\bu})\!\big)
\!\otimes\!\big(\!\det\dbar_{\C}\big)\!\Big|_{\io_{g;S^+,S^-}([\u])}
\lra \La^{\top}_{\R}\big(T\cM_{g,\ell}^{\bu}\big)
\!\otimes\!\big(\!\det\dbar_{\C}\big)\!\Big|_{\u}$$
obtained by restricting to the first copy of~$\Si$ similarly to~\eref{Dblorient_e}
respects the natural orientation of the domain provided by \cite[Proposition~5.9]{RealGWsI}
and the complex orientation on the target if and only~if
\hbox{$(g\!-\!1)\!+\!|S^-|$} is even.
The summand~$(g\!-\!1)$ arises from Step~\ref{SD_it} because
the two natural complex orientations of the real dual of 
a finite-dimensional complex vector space are the same if and only if
its complex dimension is even; see the beginning of \cite[Section~3.1]{RealGWsII}.
The summand~$|S^-|$ arises from Steps~\ref{ses_it} and~\ref{SD_it}
because the complex vector space in~\ref{ses_it} is the direct sum of the cotangent spaces
at the marked points~$z_i^+$ (and it is dualized in Step~\ref{SD_it}).
The analogous comparison of the orientation on the first factor on
the right-hand side of~\eref{RfMisom_e}
induced by~$(L,[\psi],\fs)$ and the orientation obtained by restricting to 
the first copy of~$\Si$ is provided by Corollary~\ref{Dblorient_crl}.
\end{proof}

\subsection{Orientations of nodal strata}
\label{MSorient_subs2}

\noindent
Let $(X,\om,\phi)$, $J$, $g,\ell$, and $B$ be as in Theorem~\ref{main_thm}.
We denote by $\cN_XX^{\phi}$ the normal bundle of~$X^{\phi}$ in~$X$.
The symplectic orientation on~$X$ and an orientation on~$X^{\phi}$,
such as one induced by a real orientation~$(L,[\psi],\fs)$ on~$(X,\om,\phi)$,
determine an orientation on~$\cN_XX^{\phi}$ via the short exact sequence
$$0\lra TX^{\phi}\lra TX|_{X^{\phi}}\lra \cN_XX^{\phi}\lra0$$
of real vector bundles over~$X^{\phi}$.
For each $i\!\in\![\ell]$, let 
$$\ev_i\!:\ov\fM_{g,\ell}^{\phi;\bu}(B;J)\lra X \qquad\hbox{and}\qquad
\cL_i,\cL_i^-\lra \ov\fM_{g,\ell}^{\phi;\bu}(B;J)$$
be the natural evaluation map at the marked point~$z_i^+$
and the universal tangent line orbi-bundles at the marked points~$z_i^+$ and~$z_i^-$, 
respectively.
The involutions~$\si$ on the domains of the elements of~$\ov\fM_{g,\ell}^{\phi;\bu}(B;J)$
induce an $\C$-antilinear isomorphism
$$\Phi_i\!:\cL_i\lra\cL_i^-$$
covering the identity of~$\ov\fM_{g,\ell}^{\phi;\bu}(B;J)$. 
Let 
$$\big(\cL_i\!\otimes\!\cL_i^-\big)^{\R}=
\big\{rv\!\otimes_{\C}\!\Phi_i(v)\!:v\!\in\!\cL_i,\,r\!\in\!\R\big\}.$$
This real line bundle over $\ov\fM_{g,\ell}^{\phi;\bu}(B;J)$ is canonically oriented
by the standard orientation of~$\R$.\\

\noindent
Define
\begin{equation*}\begin{split}
\ov\fM_{g-2,\ell+2}'^{\phi;\bu}(B;J)
&=\big\{[\u]\!\in\!\ov\fM_{g-2,\ell+2}^{\phi;\bu}(B;J)\!:\,
\ev_{\ell+1}([\u])\!=\!\ev_{\ell+2}([\u])\!\big\},\\
\ov\fM_{g-2,\ell+2}''^{\phi;\bu}(B;J)
&=\big\{[\u]\!\in\!\ov\fM_{g-1,\ell+1}^{\phi;\bu}(B;J)\!:\,
\ev_{\ell+1}([\u])\!\in\!X^{\phi}\!\big\}.
\end{split}\end{equation*}
The short exact sequences
\begin{equation*}\begin{split}
0&\lra T\ov\fM_{g-2,\ell+2}'^{\phi;\bu}(B;J)\lra 
T\ov\fM_{g-2,\ell+2}^{\phi;\bu}(B;J)\!\big|_{\ov\fM_{g-2,\ell+2}'^{\phi;\bu}(B;J)} 
\lra \ev_{\ell+1}^{\,*}TX\lra 0,\\
0&\lra T\ov\fM_{g-1,\ell+1}''^{\phi;\bu}(B;J)\lra 
T\ov\fM_{g-1,\ell+1}^{\phi;\bu}(B;J)\!\big|_{\ov\fM_{g-1,\ell+1}''^{\phi;\bu}(B;J)} 
\lra \ev_{\ell+1}^{\,*}\cN_XX^{\phi}\lra 0
 \end{split}\end{equation*}
induce isomorphisms
\BE{SubIsom_e}\begin{split}
\La_{\R}^{\top}\big(T\big(\ov\fM_{g-2,\ell+2}^{\phi;\bu}(B;J)\!\big)\!\big)
\!\big|_{\ov\fM_{g-2,\ell+2}'^{\phi;\bu}(B;J)}
&\approx 
\La_{\R}^{\top}\big(T\big(\ov\fM_{g-2,\ell+2}'^{\phi;\bu}(B;J)\!\big)\!\big)
\!\otimes\! \ev_{\ell+1}^*\big(\La_{\R}^{\top}(TX)\!\big),\\
\La_{\R}^{\top}\big(T\big(\ov\fM_{g-1,\ell+1}^{\phi;\bu}(B;J)\!\big)\!\big)
\!\big|_{\ov\fM_{g-1,\ell+1}''^{\phi;\bu}(B;J)}
&\approx 
\La_{\R}^{\top}\big(T\big(\ov\fM_{g-1,\ell+1}''^{\phi;\bu}(B;J)\!\big)\!\big)
\!\otimes\! \ev_{\ell+1}^*\big(\La_{\R}^{\top}(\cN_XX^{\phi})\!\big)
\end{split}\EE
of real line bundles over $\ov\fM_{g-2,\ell+2}'^{\phi;\bu}(B;J)$
and $\ov\fM_{g-1,\ell+1}''^{\phi;\bu}(B;J)$, respectively.\\

\noindent
We denote by 
\BE{Riogldfn_e2}
\io_{g,\ell}^{\C}\!:\ov\fM_{g-2,\ell+2}'^{\phi;\bu}(B;J)\lra\ov\fM_{g,\ell}^{\phi;\bu}(B;J)
\quad\hbox{and}\quad
\io_{g,\ell}^E\!:\ov\fM_{g-1,\ell+1}''^{\phi;\bu}(B;J)\lra\ov\fM_{g,\ell}^{\phi;\bu}(B;J)\EE
the immersion obtained by identifying the marked points~$z_{\ell+1}^+$ and~$z_{\ell+1}^-$ 
of the domain of each map with~$z_{\ell+2}^+$ and~$z_{\ell+2}^-$, respectively,
to form a conjugate pair of nodes
and the immersion obtained by identifying the marked point~$z_{\ell+1}^+$ 
of the domain of each map with~$z_{\ell+1}^-$ to form an $E$-node.
The first immersion is generically $4\!:\!1$ onto its image,
while the second is generically $2\!:\!1$.
There are canonical isomorphisms
\begin{equation*}\begin{split}
\cN\io_{g,\ell}^{\C}&\equiv 
\frac{\io_{g,\ell}^{\C\,*}T(\ov\fM_{g,\ell}^{\phi;\bu}(B;J)\!)}
{\nd\io_{g,\ell}^{\C}(T(\ov\fM_{g-2,\ell+2}'^{\phi;\bu}(B;J)\!)\!)}
\approx \cL_{\ell+1}\!\otimes_{\C}\!\cL_{\ell+2},\\
\cN\io_{g,\ell}^E&\equiv 
\frac{\io_{g,\ell}^{E\,*}T(\ov\fM_{g,\ell}^{\phi;\bu}(B;J)\!)}
{\nd\io_{g,\ell}^E(T(\ov\fM_{g-1,\ell+1}''^{\phi;\bu}(B;J)\!)\!)}
\approx \big(\cL_{\ell+1}\!\otimes\!\cL_{\ell+1}^-\big)^{\R}\,.
\end{split}\end{equation*}
They induce isomorphisms
\BE{CompOrient_e0}\begin{split} 
\io_{g,\ell}^{\C\,*}\big(\La_{\R}^{\top}
\big(T(\ov\fM_{g,\ell}^{\phi;\bu}(B;J)\!)\!\big)\!\big)
&\approx \La_{\R}^{\top}\big(T\big(\ov\fM_{g-2,\ell+2}'^{\phi;\bu}(B;J)\!\big)\!\big)
\!\otimes\! \La_{\R}^2\big(\cL_{\ell+1}\!\otimes_{\C}\!\cL_{\ell+2}\big),\\
\io_{g,\ell}^{E\,*}\big(\La_{\R}^{\top}
\big(T(\ov\fM_{g,\ell}^{\phi;\bu}(B;J)\!)\!\big)\!\big)
&\approx \La_{\R}^{\top}\big(T\big(\ov\fM_{g-1,\ell+1}''^{\phi;\bu}(B;J)\!\big)\!\big)
\!\otimes\! \big(\cL_{\ell+1}\!\otimes\!\cL_{\ell+1}^-\big)^{\R}
\end{split}\EE
of real line bundles over $\ov\fM_{g-2,\ell+2}'^{\phi;\bu}(B;J)$
and $\ov\fM_{g-1,\ell+1}''^{\phi;\bu}(B;J)$, respectively.\\

\noindent
A real orientation~$(L,[\psi],\fs)$ on $(X,\om,\phi)$ and the symplectic orientation on~$X$
induce orientations on $\ov\fM_{g-2,\ell+2}'^{\phi;\bu}(B;J)$
and $\ov\fM_{g-1,\ell+1}''^{\phi;\bu}(B;J)$ via~\eref{ROrientCompl_e2} and~\eref{SubIsom_e}.
We will call these induced orientations of the two moduli spaces 
\sf{the intrinsic orientations induced by~$(L,[\psi],\fs)$ via~\eref{ROrientCompl_e2}}.
A real orientation~$(L,[\psi],\fs)$ on~$(X,\om,\phi)$,
the complex orientation on~$\cL_{\ell+1}\!\otimes_{\C}\!\cL_{\ell+2}$,
and the canonical orientation on~$(\cL_{\ell+1}\!\otimes\!\cL_{\ell+1}^-)^{\R}$
induce orientations on these moduli spaces via~\eref{ROrientCompl_e2} and~\eref{CompOrient_e0}.
We will call the latter induced orientations of the two moduli spaces 
\sf{the pullback orientations induced by~$(L,[\psi],\fs)$ via~\eref{ROrientCompl_e2}}.
If in addition $\wt\phi$ is a conjugation on~$L$ lifting~$\phi$,
we define \sf{the intrinsic} and \sf{pullback orientations} on 
$\ov\fM_{g-2,\ell+2}'^{\phi;\bu}(B;J)$ and $\ov\fM_{g-1,\ell+1}''^{\phi;\bu}(B;J)$
\sf{induced by~$(L,[\psi],\fs)$ and~$\wt\phi$
via~\eref{ROrientCompl_e3}} as above with~\eref{ROrientCompl_e2} replaced by~\eref{ROrientCompl_e3}.

\begin{prp}\label{Rnodisom_prp}
Suppose $(X,\om,\phi)$, $J$, $(L,[\psi],\fs)$, $g$, $\ell$, and~$B$ are as in Theorem~\ref{main_thm}.
The intrinsic and pullback orientations on $\ov\fM_{g-2,\ell+2}'^{\phi;\bu}(B;J)$
induced by~$(L,[\psi],\fs)$ via~\eref{ROrientCompl_e2} are the same.
If in addition $\wt\phi$ is a conjugation on~$L$ lifting~$\phi$,
the intrinsic and pullback orientations on 
$\ov\fM_{g-2,\ell+2}'^{\phi;\bu}(B;J)$ 
induced by~$(L,[\psi],\fs)$ and~$\wt\phi$ via~\eref{ROrientCompl_e3} are opposite.
\end{prp}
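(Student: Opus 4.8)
The plan is to deduce the second claim from the first using Lemma~\ref{CvsCanorient_lmm}, and to prove the first by unfolding both orientations of $\ov\fM_{g-2,\ell+2}'^{\phi;\bu}(B;J)$ through the forgetful morphism~$\ff$ and comparing them factor by factor via Corollary~\ref{Rnodisom_crl}, its $\R\ov\cM$-analogue, and Lemma~\ref{Rnodisom_lmm}. For the deduction: along $\ov\fM_{g-2,\ell+2}'^{\phi;\bu}(B;J)$ the only discrepancy between the orientation induced via~\eqref{ROrientCompl_e2} and the one induced via~\eqref{ROrientCompl_e3} is the choice between the projection orientation on the determinant of a real CR-operator on $u^*(L^*\!\oplus\!\phi^*\ov L^*,\phi_{L^*}^\oplus)$ and the canonical orientation on the determinant of a real CR-operator on $2u^*(L^*,\wt\phi^*)$, i.e.\ a single instance of Lemma~\ref{CvsCanorient_lmm} with $\rk_\C L=1$ and $\deg L=-\lr{c_1(X,\om),B}/2$. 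For the intrinsic orientation this instance is taken over the arithmetic-genus-$(g\!-\!2)$ domains of $\ov\fM_{g-2,\ell+2}^{\phi;\bu}(B;J)$, and for the pullback orientation over the arithmetic-genus-$g$ domains of $\ov\fM_{g,\ell}^{\phi;\bu}(B;J)$. Since the integers $m=(1\!-\!g')-\lr{c_1(X,\om),B}/2$ attached to these two genera differ by~$2$, the signs $(-1)^{m(m-1)/2}$ differ by $(-1)$; hence, granting the first claim (intrinsic $=$ pullback for~\eqref{ROrientCompl_e2}), the intrinsic and pullback orientations for~\eqref{ROrientCompl_e3} must be opposite.

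For the first claim, as in the proofs of Propositions~\ref{unionorient_prp} and~\ref{Dblorient_prp} it suffices to treat $[\u]$ with smooth stable domain $(\wh\Si,\wh\si)$; then the domain of $\io_{g,\ell}^\C([\u])$ is the symmetric surface $(\Si,\si)$ obtained by identifying $\wh z_{\ell+1}^\pm$ with $\wh z_{\ell+2}^\pm$ into a conjugate pair of nodes, and $(\wh\Si,\wh\si)$ is its normalization there in the sense of~\eqref{Sinormdfn_e}. Equating~\eqref{SubIsom_e} with~\eqref{CompOrient_e0} produces a canonical isomorphism between $\La_\R^{\top}(T\ov\fM_{g-2,\ell+2}^{\phi;\bu}(B;J))|_{\ov\fM_{g-2,\ell+2}'}\otimes\La_\R^2(\cL_{\ell+1}\otimes_\C\cL_{\ell+2})$ and $\io_{g,\ell}^{\C\,*}\La_\R^{\top}(T\ov\fM_{g,\ell}^{\phi;\bu}(B;J))\otimes\ev_{\ell+1}^*\La_\R^{\top}(TX)$, and the claim is precisely that this isomorphism intertwines the orientation built from the real orientation of $\ov\fM_{g-2,\ell+2}^{\phi;\bu}(B;J)$ and the complex orientation of $\cL_{\ell+1}\otimes_\C\cL_{\ell+2}$ with the one built from the real orientation of $\ov\fM_{g,\ell}^{\phi;\bu}(B;J)$ and the complex orientation of~$TX$. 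I would unfold both sides through~\eqref{RfMisom_e}--\eqref{RMBvfc_e} into the $\det D_{(TX,\nd\phi)}$-factor and the $\ff^*\La_\R^{\top}(T\R\ov\cM^\bu)$-factor, assembling a diagram in the spirit of Figure~\ref{unionorient_fig}. On the $\det D_{(TX,\nd\phi)}$-factor the relevant comparison is exactly Corollary~\ref{Rnodisom_crl} for the real bundle pair $u^*(TX,\nd\phi)$: the normalization isomorphism~\eqref{Rnodisom_e}, whose $\La_\R^{\top}(V_{x^+})$-factor is canonically $\ev_{\ell+1}^*\La_\R^{\top}(TX)$, makes the square~\eqref{Rnodisom_e0a} fail to commute via~\eqref{ROrientCompl_e2} (and commute via~\eqref{ROrientCompl_e3}), contributing a definite sign. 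On the $\ff^*\La_\R^{\top}(T\R\ov\cM^\bu)$-factor the comparison is the $\R\ov\cM$-analogue of Corollary~\ref{Rnodisom_crl} for the node-identifying immersion $\R\ov\cM_{g-2,\ell+2}^\bu\!\to\!\R\ov\cM_{g,\ell}^\bu$ of~\eqref{Riogldfn_e2}; I would establish it by running the construction \ref{LSDolb_it}--\ref{ROrientStab_it} on a surface with a conjugate pair of nodes against its normalization (the twisting~\ref{ROrientTwist_it} drops out, being invariant under this immersion by the remark following it) and applying Lemma~\ref{Rnodisom_lmm} and Corollary~\ref{Rnodisom_crl} to the rank-one real bundle pairs $(T\cC,\vph)$ and $(T\cC^*\!\otimes\!T^*\Si,\vph^*\!\otimes\!\nd\si^*)$ occurring there, together with the trivial-bundle instance of~\eqref{Rnodisom_e} relating $\det\dbar_{(\wh\Si,\wh\si)}$ with $\det\dbar_{(\Si,\si)}$. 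Combining the two factors and tracking the residual reshuffling signs as in the proof of Proposition~\ref{unionorient_prp} — the odd power~$n$ in~\eqref{RMBvfc_e}, the interchange of the $\det\dbar_\C$-factors coming from the $\det D_{(TX,\nd\phi)}$- and $\R\ov\cM$-parts, and the $\La_\R^{\top}\C$-factors produced by~\eqref{Rnodisom_e} — all contributions cancel, which is the assertion.

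The main obstacle will be this final sign bookkeeping: showing that the sign produced by Corollary~\ref{Rnodisom_crl} on the $\det D_{(TX,\nd\phi)}$-factor is cancelled precisely by the sign of the $\R\ov\cM$-factor comparison together with the reshuffling of the $(n\!+\!1)$ copies of $\det\dbar_\C$, complicated by the fact that their pairing into canonically oriented pairs in~\eqref{RMBvfc_e} mixes the $\det D_{(TX,\nd\phi)}$- and $\R\ov\cM$-contributions and that $n$ is odd. Establishing the $\R\ov\cM$-analogue of Corollary~\ref{Rnodisom_crl} — the node-degeneration behaviour of the natural orientation of~\eqref{RcMisom_e} — is the secondary technical point. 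As a consistency check, running the entire argument with~\eqref{ROrientCompl_e3} in place of~\eqref{ROrientCompl_e2}, where Corollary~\ref{Rnodisom_crl} contributes no sign and the rest is unchanged, should reproduce the "opposite" conclusion of the second claim, matching the Lemma~\ref{CvsCanorient_lmm} deduction above.
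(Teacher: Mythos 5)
Your approach matches the paper's. The second claim follows from the first by two applications of Lemma~\ref{CvsCanorient_lmm}, once at arithmetic genus $g-2$ (intrinsic side) and once at genus $g$ (pullback side), whose exponents $m(m-1)/2$ differ by an odd integer exactly as you compute. The first claim is proved, as you propose, by splitting through~\eqref{RfMisom_e} and comparing the $\det D_{(TX,\nd\phi)}$-factor via Corollary~\ref{Rnodisom_crl} (a $(-1)$ for~\eqref{ROrientCompl_e2}, a $(+1)$ for~\eqref{ROrientCompl_e3}) and the Deligne--Mumford factor separately. The two points where the paper is leaner: for the $\R\ov\cM$-side analogue you propose to re-derive by running Steps~\ref{LSDolb_it}--\ref{ROrientStab_it} against the normalization, it instead cites \cite[Proposition~4.18]{RealGWsII}, tracing the resulting $(-1)$ to Step~\ref{SD_it} (the two natural complex orientations of the real dual of~$\C$ are opposite); and it dispatches your ``main obstacle'' in a single sentence, since every factor being reshuffled --- $\La_\R^{\top}(TX)$, $\La_\R^2(\cL_{\ell+1}\!\otimes_\C\!\cL_{\ell+2})$, and $\La_\R^2\C$ --- has even rank, so no reordering signs arise (this is precisely why this proposition is easier than Proposition~\ref{RnodisomE_prp}, where the odd-dimensional $X^\phi$ enters). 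With these two inputs your two $(-1)$'s cancel for~\eqref{ROrientCompl_e2}, while the lone $(-1)$ survives for~\eqref{ROrientCompl_e3}, matching the statement.
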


\begin{proof}
The second claim is \cite[Theorem~1.2]{RealGWsII};
it also follows from the first claim and Lemma~\ref{CvsCanorient_lmm} applied twice.
The first claim stated for one-dimensional targets $(X,\om,\phi)$ 
is \cite[Proposition~7.3]{GI2}, but its proof is a slight modification of 
the proof of \cite[Theorem~1.2]{RealGWsII} summarized below and
again applies in general.
The full proof is similar to that of Proposition~\ref{RnodisomE_prp},
but without any concern about reordering the orienting factors,
because the degree of at least one of the factors being reordered, 
$\La_{\R}^{\top}(TX)$, $\La_{\R}^2(\cL_{\ell+1}\!\otimes_{\C}\!\cL_{\ell+2})$,
or~$(\La_{\R}^2\C)$, is even.\\

\noindent
Similarly to Propositions~\ref{unionorient_prp} and~\ref{Dblorient_prp}, 
it is sufficient to establish the claims over an element~$[\wh\u]$
of $\ov\fM_{g-2,\ell+2}'^{\phi;\bu}(B;J)$ with smooth domain 
$(\wh\Si,\wh\si,\wh\fj)$ so that the domain~$(\Si,\si,\fj)$ of 
$\io_{g,\ell}^{\C}([\wh\u])$ has one conjugate pair of nodes and no other nodes.
We denote by~$\wh\cC$ and~$\cC$ the marked domains of~$\wh\u$ and~$\io_{g,\ell}^{\C}([\wh\u])$,
respectively, and by $\cL_{\wh\cC}$ the fiber of~$\cL_{\ell+1}\!\otimes_{\C}\!\cL_{\ell+2}$
at~$\wh\u$. 
In both cases, the proof again starts with the isomorphism~\eref{RfMisom_e}.
By \cite[Proposition~4.18]{RealGWsII}, the diagram
\BE{RnodisomDM_e0}\begin{split}
\xymatrix{\La_{\R}^{\top}\big(T_{[\wh\cC]}(\R\ov\cM_{g-1,\ell+1}^{\bu})\!\big)
\!\otimes\!\big(\La_{\R}^2\cL_{\wh{u}}\big)
\ar[rr]^{\eref{RcMisom_e}}_{\approx}  \ar[d]_{\eref{CompOrient_e0}}^{\approx}&& 
\big(\!\det\dbar_{(\wh\Si,\wh\si)}\big)\!\otimes\!\big(\La_{\R}^2\cL_{\wh\cC}\big)
\ar[d]^{\eref{Rnodisom_e}}_{\approx}\\
\La_{\R}^{\top}\big(T_{[\cC]}\big(\R\ov\cM_{g,\ell}^{\bu}\big)\!\big)
\ar[rr]^>>>>>>>>>>>>{\eref{RcMisom_e}}_>>>>>>>>>>>>{\approx} &&
\big(\!\det\dbar_{(\Si,\si)}\big)\!\otimes\!\big(\La_{\R}^2\C\big)
\!\otimes\!\big(\La_{\R}^2\cL_{\wh\cC}\big)}
\end{split}\EE
with the homotopy classes of the horizontal isomorphisms determined 
by the orientation of the line orbi-bundle~\eref{RcMisom_e}
and the complex orientations of~$\C$ and~$\cL_{\wh\cC}$,
does not commute up to homotopy.
The reversal of the orientation occurs due to Step~\ref{SD_it}
because the two natural complex orientations of the real dual of~$\C$ 
are opposite.
The analogous comparison of the orientations on
the first factor on the right-hand side of~\eref{RfMisom_e} induced by~$(L,[\psi],\fs)$
is provided by Corollary~\ref{Rnodisom_crl}.
\end{proof}

\noindent
As explained in Section~\ref{MSorient0_subs}, there is a choice involved
in Step~\ref{SD_it} of the construction of the orientation on the real line orbi-bundle~\eref{RcMisom_e};
this choice corresponds to a choice of orientation on \hbox{$\R\ov\cM_{0,2}\!\approx\!\R\P^1$}.
Reversing the last choice reverses the orientations on
the real line orbi-bundle in~\eref{RcMisom_e} 
(and thus on the moduli space of maps in~\eref{RfMisom_e}) 
if and only if \hbox{$g\!\in\!2\Z$}. 
Thus, the analogues of \cite[Proposition~4.18]{RealGWsII} and Proposition~\ref{unionorient_prp} above
for the second immersion in~\eref{Riogldfn_e2} depend on the choice of orientation on~$\R\ov\cM_{0,2}$.\\

\noindent
For $g\!\in\!\Z$ and $\ell\!\in\!\Z^{\ge0}$, let
$$\io_E\!:\R\ov\cM_{g-1,\ell+1}^{\bu}\lra \R\ov\cM_{g,\ell}^{\bu}$$
be the immersion obtained by identifying the marked point~$z_{\ell+1}^+$ 
of each curve with~$z_{\ell+1}^-$ to form an $E$-node.
We denote by~$\tau'_{\R\ov\cM_{g-1,\ell+1}^{\bu}}$ the real line bundle 
$$\R\ov\cM_{g-1,\ell+1}^{\bu}\!\times\!\big(\C/\R\big)\lra\R\ov\cM_{g-1,\ell+1}^{\bu};$$
it has a canonical orientation~$\fo_{\C/\R}$ inherited from 
the standard orientations of~$\C$ and~$\R$.

\begin{lmm}\label{unionorientEDM_lmm}
Suppose $g\!\in\!\Z$ and $\ell\!\in\!\Z^{\ge0}$.
If $\R\ov\cM_{0,2}$ is oriented via the diffeomorphism~\eref{cMtaudiff_e},
then the diagram 
\BE{RnodisomEDM_e0}\begin{split}
\xymatrix{\La_{\R}^{\top}\big(T(\R\ov\cM_{g-1,\ell+1}^{\bu})\!\big)
\!\otimes\!\big(\cL_{\ell+1}\!\otimes\!\cL_{\ell+1}^-\big)^{\R}
\ar[r]^<<<<<<<<<<<{\eref{RcMisom_e}}_<<<<<<<<<<<{\approx}  \ar[d]_{\eref{CompOrient_e0}}^{\approx}& 
\big(\!\det\dbar_{\C}\big)\!\otimes\!\big(\cL_{\ell+1}\!\otimes\!\cL_{\ell+1}^-\big)^{\R}
\ar[d]^{\eref{RnodisomE_e}}_{\approx}\\
\io_E^*\La_{\R}^{\top}\big(T\big(\R\ov\cM_{g,\ell}^{\bu}\big)\!\big)
\ar[r]^>>>>>>>>>>>>{\eref{RcMisom_e}}_>>>>>>>>>>>>{\approx} &
\io_E^*\big(\!\det\dbar_{\C}\big)\!\otimes\!\tau'_{\R\ov\cM_{g-1,\ell+1}^{\bu}}
\!\otimes\!\big(\cL_{\ell+1}\!\otimes\!\cL_{\ell+1}^-\big)^{\R}}
\end{split}\EE 
with the homotopy classes of the horizontal isomorphisms determined 
by the orientation of the line orbi-bundle~\eref{RcMisom_e}
and the canonical orientations of~$\tau'_{\R\ov\cM_{g-1,\ell+1}^{\bu}}$
and $(\cL_{\ell+1}\!\otimes\!\cL_{\ell+1}^-)^{\R}$, commutes.
\end{lmm}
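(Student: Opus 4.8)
The plan is to reduce Lemma~\ref{unionorientEDM_lmm} to a statement about a single symmetric Riemann surface with an $E$-node, in complete parallel with the proof of Lemma~\ref{unionorientDM_lmm}, and then to identify the parity obstruction that arises and check that it vanishes precisely when $\R\ov\cM_{0,2}$ is oriented via~\eref{cMtaudiff_e}. First I would fix a point $[\wh\cC]\!\in\!\R\ov\cM_{g-1,\ell+1}^{\bu}$ with smooth underlying symmetric surface $(\wh\Si,\wh\si,\wh\fj)$, so that $\io_E([\wh\cC])$ has a single $E$-node and no other nodes, and it suffices to establish commutativity of~\eref{RnodisomEDM_e0} at this point by continuity. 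As in Lemma~\ref{unionorientDM_lmm}, the horizontal isomorphisms~\eref{RcMisom_e} factor through the chain of isomorphisms \ref{LSDolb_it}--\ref{ses_it}, \eref{reldetdfn_e0} built from the canonical real orientation on $(T^*\Si^{\otimes2},\nd\si^{*\otimes2})$, followed by the twisting~\ref{ROrientTwist_it}. The parity of the twisting exponent $(g\!-\!1)\!+\!|\si|$ in~\ref{ROrientTwist_it} is invariant under the $E$-node-identifying immersion (as noted in the excerpt), so the twisting can be ignored here exactly as in Lemma~\ref{unionorientDM_lmm}.

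Next I would assemble the diagram comparing the two routes: on the Deligne--Mumford side, Steps~\ref{LSDolb_it}--\ref{ses_it} commute with the normalization at the $E$-node (the Kodaira--Spencer/Dolbeault identifications and the Serre-duality step all behave functorially under adding the $E$-node as a smooth point, the new cotangent direction at the node contributing the factor corresponding to $\cL_{\ell+1}\!\otimes\!\cL_{\ell+1}^-$), while the orientation-reversal behavior at the level of determinant lines is governed by Corollary~\ref{RnodisomE_crl} applied with $(V,\vph)\!=\!2(T\Si,\nd\si)$, i.e. with $L\!=\!T\Si$ and $\wt\si\!=\!\nd\si$. By the second claim of Corollary~\ref{RnodisomE_crl}, the relevant square commutes up to homotopy if and only if $\deg T\Si\!\in\!2\Z$, which holds since $\deg T\Si\!=\!2\!-\!2g_a$ for each component and arithmetic genera are integers; so the determinant-line comparison itself contributes no sign. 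The remaining contributions to a possible sign are: the reordering of the orienting factors (here $\La_{\R}^{\top}(\cL_{\ell+1}\!\otimes\!\cL_{\ell+1}^-)^{\R}\!=\!\La_{\R}^1$ and $\tau'$ both have odd rank, so one must track this carefully — unlike in Lemma~\ref{Rnodisom_lmm} where the factor being moved has even rank), and the choice made in Step~\ref{SD_it}, which as the paragraph following Proposition~\ref{Rnodisom_prp} explains corresponds exactly to a choice of orientation on $\R\ov\cM_{0,2}\!\approx\!\R\P^1$ and flips the orientation of~\eref{RcMisom_e} iff $g\!\in\!2\Z$.

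The main obstacle I expect is bookkeeping the odd-rank reordering against the $g$-dependent sign from Step~\ref{SD_it}, and showing these two cancel precisely when $\R\ov\cM_{0,2}$ is given the orientation from~\eref{cMtaudiff_e}. Concretely, I would first verify the base case $g\!=\!0$, $\ell\!=\!2$ (where both sides of~\eref{RnodisomEDM_e0} are essentially the definition of the orientation on $\R\ov\cM_{0,2}$ via~\eref{cMtaudiff_e}, and $\io_E$ lands in $\R\ov\cM_{1,1}^{\bu}$), pinning down the normalization so that the diagram commutes there by fiat. Then, for general $(g,\ell)$, I would compare with the base case by a gluing/factorization argument: the $E$-node immersion $\io_E$ on $\R\ov\cM_{g-1,\ell+1}^{\bu}$ is, near $[\wh\cC]$, the product of the $E$-node immersion on the $\R\ov\cM_{0,2}$ factor of a suitable bubbling degeneration with the identity elsewhere, and the parity of the discrepancy between the two routes changes by exactly the parity of the index of $\dbar_{(\wh\Si,\wh\si)}\!=\!(1\!-\!g_a)n|_{n=1}$ summands relative to the genus-$0$ model — i.e. by $(g\!-\!1)$ mod $2$ — which is precisely the sign by which the $\R\ov\cM_{0,2}$-orientation choice propagates. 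Matching this against the base case completes the argument. The only genuinely delicate point is getting the Koszul signs in the reordering right; everything else is a functoriality/continuity reduction already carried out in the analogous Lemma~\ref{unionorientDM_lmm} and in Corollary~\ref{RnodisomE_crl}.
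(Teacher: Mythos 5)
Your high-level plan — reduce to a point by continuity, factor the horizontal isomorphisms through Steps~\ref{LSDolb_it}--\ref{ROrientStab_it}, control the determinant-line sign by a lemma of Corollary~\ref{RnodisomE_crl} type, verify a base case, and propagate — is the right shape, but several of the pivots do not hold up as stated. First, your central claim that Steps~\ref{LSDolb_it}--\ref{ses_it} "commute with the normalization at the $E$-node" and that the entire orientation discrepancy is captured by applying Corollary~\ref{RnodisomE_crl} to the stabilization step is not justified: the Serre duality in Step~\ref{SD_it} is a dualization and introduces its own sign (indeed, in the analogous conjugate-pair-of-nodes diagram the orientation reversal is explicitly attributed to Step~\ref{SD_it}, not to the stabilization isomorphism). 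Corollary~\ref{RnodisomE_crl} governs only the analogue of~\eref{ROrientCompl_e3}; it says nothing about how Serre duality and the exact triple~\ref{ses_it} interact with the normalization $q$ or with the splitting off of $(\cL_{\ell+1}\!\otimes\!\cL_{\ell+1}^-)^{\R}$ and $\tau'$. Second, your base case is misidentified: with $g\!=\!0$, $\ell\!=\!2$, the immersion $\io_E$ maps $\R\ov\cM_{-1,3}^{\bu}$ (four points, the genus-$0$ doublets) into $\R\ov\cM_{0,2}^{\bu}$, not into $\R\ov\cM_{1,1}^{\bu}$, and the verification there is not "by fiat": one must actually trace the orientation on $\ker\dbar_{(\wh\Si,\wh\si)}$ (conjugate $\C$-valued constants) against the orientation on $\ker\dbar_{(\Si,\si)}$ (real constants) and against the positive direction of $E$-node smoothings in $(\cL_{\ell+1}\!\otimes\!\cL_{\ell+1}^-)^{\R}$, invoking the explicit Serre-duality comparison for doublets.

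Third, the step from the base case to general $(g,\ell)$ is not an argument as written. You assert a "gluing/factorization" in which $\io_E$ is locally a product with an $\R\ov\cM_{0,2}$-factor, but $\io_E$ is an immersion into a larger Deligne--Mumford space, not a disjoint union, and the parity formula you quote ($(g\!-\!1)$ mod $2$, which you derive from an index that is actually $2\!-\!g \equiv g$ mod $2$) does not reduce to a clean cancellation that makes the diagram commute for \emph{all} $g$ as the Lemma asserts. The paper's route is different and more concrete: it first invokes Lemma~\ref{unionorientDM_lmm} to reduce to one point of each connected component (either a connected curve or a basic doublet with connected $\Si_1$), then adds marked points so the component carrying $z_{\ell+1}^+$ is a $\P^1$ with either a second marked pair or a non-real node, reduces the second sub-case to the first by the \emph{Exact Squares} property together with the conjugate-pair-of-nodes comparison \cite[Proposition~4.18]{RealGWsII} and Lemma~\ref{unionorientDM_lmm}, and only then does the direct base-case computation on the genus-$0$ doublet. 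If you want to salvage your approach, you should replace your vague "gluing" by the Lemma~\ref{unionorientDM_lmm}-plus-\cite[Proposition~4.18]{RealGWsII} reduction, and you must actually carry out the base-case orientation trace rather than declaring it definitional.
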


\begin{proof} By Lemma~\ref{unionorientDM_lmm}, 
it is sufficient to establish the claim over a single element~$[\wh\cC]$ 
of each connected component of~$\R\ov\cM_{g-1,\ell+1}^{\bu}$
parametrizing either connected marked curves or basic marked doublets as in~\eref{Dbldfn_e}
with~$\Si_1$ connected.
By adding more marked points if necessary, we can choose~$\wh\cC$ so that 
the irreducible component~$\Si_1$ of~$\wh\cC$ carrying the marked point~$z_{\ell+1}^+$ is~$\P^1$
and also carries the marked point~$z_1^+$ and either
\begin{enumerate}[label=(\arabic*),leftmargin=*] 

\item the marked point~$z_2^-$ and no other marked or nodal points

\item one non-real node and no other marked or real points.

\end{enumerate}
In the first case, $\wh\cC$ is a genus 0 doublet with 3 conjugate pairs of marked points,
and so~$g\!=\!0$, $\ell\!=\!2$, and $\R\ov\cM_{g-1,\ell+1}^{\bu}$ consists of four points 
in this case.
Since the degrees of~$\La_{\R}^2\C$ and $\La_{\R}^2\cL_{\wh\cC}$ in~\eref{RnodisomDM_e0} 
are even,
the Exact Squares property, \cite[Proposition~4.18]{RealGWsII}, and  
Lemma~\ref{unionorientDM_lmm} reduce the second case above to the first.\\

\noindent
We now assume that $\wh\cC$ is as in the first case above.
Thus, $\io_E([\wh\cC])$ corresponds to \hbox{$0\!\in\!\R\!\subset\!\R\P^1$} 
under the diffeomorphism~\eref{cMtaudiff_e}.
The kernel of the surjective operator~$\dbar_{(\Si,\si)}$ at~$\io_E([\wh\cC])$
consists of constant $\R$-valued functions and thus has a canonical orientation~$\fo_{\dbar}$.
By the assumption in the statement of the lemma, 
the orientation of $T_{\io_E([\wh\cC])}(\R\ov\cM_{0,2})$
corresponding to~$\fo_{\dbar}$ via the orientation of the real line orbi-bundle~\eref{RcMisom_e}
is given by the diffeomorphism~\eref{cMtaudiff_e}.
Since the $E$-node of~$\io_E([\wh\cC])$ is deformed to a real circle 
along the positive direction of~$(\cL_{\ell+1}\!\otimes\!\cL_{\ell+1}^-)^{\R}$,
the canonical orientation of~$(\cL_{\ell+1}\!\otimes\!\cL_{\ell+1}^-)^{\R}$
at~$[\wh\cC]$ corresponds to the orientation of $T_{\io_E([\wh\cC])}(\R\ov\cM_{0,2}^{\bu})$.
Thus, the pullback of the orientation $\fo_{\dbar}\!\otimes\!\fo_{\C/\R}$ 
by the bottom and left isomorphisms in~\eref{RnodisomEDM_e0}
corresponds to the orientation of~$\wh\cC$ as a positive point.\\

\noindent
The kernel of the surjective operator~$\dbar_{(\wh\Si,\wh\si)}$ at~$[\wh\cC]$
consists of $\C$-valued functions that are constant on each of the two components of~$\wh\cC$
and take conjugate values on~them.
The orientation~$\wh\fo_{\dbar}$ on~$\dbar_{(\wh\Si,\wh\si)}$ induced from
the complex orientation on~$\C$ by restricting these functions on~$\Si_1$
corresponds to the orientation $\fo_{\dbar}\!\otimes\!\fo_{\C/\R}$
via the right isomorphism in~\eref{RnodisomEDM_e0}.
By \cite[Lemma~3.2]{RealGWsII}, the top isomorphism in~\eref{RnodisomEDM_e0}
identifies~$\wh\fo_{\dbar}$ with the orientation of $T_{[\wh\cC]}(\R\ov\cM_{-1,3}^{\bu})$
obtained from the complex orientation of~$\ov\cM_{0,3}$ by restricting to the component~$\Si_1$.
Thus, the pullback of $\fo_{\dbar}\!\otimes\!\fo_{\C/\R}$ 
by the right and top isomorphisms in~\eref{RnodisomEDM_e0}
also corresponds to the orientation of~$\wh\cC$ as a positive point.
This establishes the claim.
\end{proof}

\begin{prp}\label{RnodisomE_prp}
Suppose $(X,\om,\phi)$, $n$, $J$, $(L,[\psi],\fs)$, $g$, $\ell$, and~$B$ are as in Theorem~\ref{main_thm}.
The intrinsic and pullback orientations on $\ov\fM_{g-1,\ell+1}''^{\phi;\bu}(B;J)$
induced by~$(L,[\psi],\fs)$ via~\eref{ROrientCompl_e2} are opposite.
If in addition $\wt\phi$ is a conjugation on~$L$ lifting~$\phi$,
the intrinsic and pullback orientations on 
$\ov\fM_{g-1,\ell+1}''^{\phi;\bu}(B;J)$ 
induced by~$(L,[\psi],\fs)$ and~$\wt\phi$ via~\eref{ROrientCompl_e3} are the same 
if and only if $g\!+\!\lr{c_1(X,\om),B}/2\!\in\!2\Z$.
\end{prp}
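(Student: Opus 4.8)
The plan is to establish the first claim directly, along the lines of Propositions~\ref{unionorient_prp}, \ref{Dblorient_prp}, and~\ref{Rnodisom_prp}, and then to deduce the second claim from it by Lemma~\ref{CvsCanorient_lmm}, as in those propositions. For the deduction: applying Lemma~\ref{CvsCanorient_lmm} with~$L$ replaced by~$L^*$, so that $\rk_{\C}L^*\!=\!1$ and $\deg L^*\!=\!\lr{c_1(X,\om),B}/2$, over the domain of a stable map in $\ov\fM_{g-1,\ell+1}^{\phi;\bu}(B;J)$ (of arithmetic genus $g\!-\!1$) and over the domain of a stable map in $\ov\fM_{g,\ell}^{\phi;\bu}(B;J)$ (of arithmetic genus~$g$), and recalling that~\eref{RBPisom_e3} is obtained from~\eref{RBPisom_e2} by precomposing with $\id\!\oplus\!\Phi_{L^*}$, shows that the orientations on the moduli space $\ov\fM_{g-1,\ell+1}^{\phi;\bu}(B;J)$, respectively $\ov\fM_{g,\ell}^{\phi;\bu}(B;J)$, induced by~$(L,[\psi],\fs)$ via~\eref{ROrientCompl_e2} and via~\eref{ROrientCompl_e3} differ by~$(-1)$ to the power $\binom{(2-g)+\lr{c_1(X,\om),B}/2}{2}$, respectively $\binom{(1-g)+\lr{c_1(X,\om),B}/2}{2}$. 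Since $\binom{m+1}{2}\!-\!\binom{m}{2}\!=\!m$, the difference of these exponents has the parity of $g\!+\!\lr{c_1(X,\om),B}/2$; combined with the first claim, this yields the second. Here and below $\R\ov\cM_{0,2}$ is oriented as in Lemma~\ref{unionorientEDM_lmm}; the first claim depends on this choice, since reversing it reverses the orientation on exactly one of the two moduli spaces above (their domains have arithmetic genera of opposite parities), but the parity in the second claim does not.

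For the first claim, as in Propositions~\ref{unionorient_prp}, \ref{Dblorient_prp}, and~\ref{Rnodisom_prp}, it is enough to work over an element $[\wh\u]$ of $\ov\fM_{g-1,\ell+1}''^{\phi;\bu}(B;J)$ whose domain $(\wh\Si,\wh\si,\wh\fj)$ is smooth and stable with its markings, so that the domain $(\Si,\si,\fj)$ of $\io_{g,\ell}^E([\wh\u])$ has a single $E$-node~$x$ and no other node; let $u$ be its map component. The intrinsic orientation on $T_{[\wh\u]}\ov\fM_{g-1,\ell+1}''^{\phi;\bu}(B;J)$ comes from the orientation on $\ov\fM_{g-1,\ell+1}^{\phi;\bu}(B;J)$ induced by~$(L,[\psi],\fs)$ via~\eref{ROrientCompl_e2}, the orientation on $\cN_XX^{\phi}$ induced by the symplectic orientation and by~$(L,[\psi],\fs)$, and~\eref{SubIsom_e}; the pullback orientation comes from the orientation on $\ov\fM_{g,\ell}^{\phi;\bu}(B;J)$ induced by~$(L,[\psi],\fs)$ via~\eref{ROrientCompl_e2}, the canonical orientation on $(\cL_{\ell+1}\!\otimes\!\cL_{\ell+1}^-)^{\R}$, and~\eref{CompOrient_e0}. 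Unwinding each moduli-space orientation through~\eref{RfMisom_e}, \eref{ROisom_e}, and~\eref{RcMisom_e}, I would assemble the comparison into a commutative square of the type in Figure~\ref{unionorient_fig} and~\eref{RnodisomDM_e0} and reduce its commutativity to three inputs: the Deligne--Mumford square, which by Lemma~\ref{unionorientEDM_lmm} commutes with no sign up to the factor $\tau'_{\R\ov\cM_{g-1,\ell+1}^{\bu}}\!\cong\!\C/\R$; the square for $u^*(TX,\nd\phi)$, which is Corollary~\ref{RnodisomE_crl} with $(V,\vph)\!=\!u^*(TX,\nd\phi)$, $k\!=\!n$, and~$L$ replaced by~$L^*$, contributing the sign $(-1)^{g-1}$ in the~\eref{ROrientCompl_e2} case; and the identification, via~\eref{RnodisomE_e} applied to $u^*(TX,\nd\phi)$ and to $\Si\!\times\!\C$, of $\ev_{\ell+1}^*\cN_XX^{\phi}$ with $(u^*TX)_x/(u^*TX)_x^{\nd\phi}$ and of the resulting $n\!+\!1$ copies of~$\C/\R$ --- the $n$ of them attached to the map factor being absorbed into the $\C^n/\R^n$ of Corollary~\ref{RnodisomE_crl}, and the last one, attached to the Deligne--Mumford factor, matched with $(\cL_{\ell+1}\!\otimes\!\cL_{\ell+1}^-)^{\R}$ by Lemma~\ref{unionorientEDM_lmm}.

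The hard part, and what distinguishes this argument from the proof of Proposition~\ref{Rnodisom_prp}, is the bookkeeping of reordering signs: the factors $\La_{\R}^{\top}(\cN_XX^{\phi})$ of rank~$n$, $(\cL_{\ell+1}\!\otimes\!\cL_{\ell+1}^-)^{\R}$ of rank~$1$, and the line~$\C/\R$ are all of odd real rank (since $n\!\not\in\!2\Z$), so every transposition of one of them past another, or past one of the rank-one lines $\det\dbar_{\C}$, introduces a sign. I would fix once and for all the order of the factors $(\det\dbar_{\C})^{\otimes(n+1)}$, $\cN_XX^{\phi}$, and $(\cL_{\ell+1}\!\otimes\!\cL_{\ell+1}^-)^{\R}$ on each side of the square, track the transpositions needed to pass between that order and the orders dictated by the orienting procedure leading to~\eref{RMBvfc_e}, by Corollary~\ref{RnodisomE_crl}, and by Lemma~\ref{unionorientEDM_lmm}, and note that the canonical orientation of a product of two copies of $\det\dbar_{\C}$ picks up a sign when both copies are simultaneously tensored with a common~$\C/\R$, so that the decomposition into $(n\!+\!1)/2$ canonically oriented blocks is not preserved on the nose under the regrouping. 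I expect these reordering signs to contribute $(-1)^{g}$ in total, independently of whether~\eref{ROrientCompl_e2} or~\eref{ROrientCompl_e3} is used; together with the $(-1)^{g-1}$ from Corollary~\ref{RnodisomE_crl} and the triviality of the sign in Lemma~\ref{unionorientEDM_lmm}, this would give the net sign $(-1)^{2g-1}\!=\!-1$ in the~\eref{ROrientCompl_e2} case, establishing the first claim.
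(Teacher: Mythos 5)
Your overall strategy matches the paper's in spirit — reduce to Lemma~\ref{unionorientEDM_lmm} for the Deligne--Mumford factor and to Corollary~\ref{RnodisomE_crl} for the map factor — but there are two significant gaps.

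\textbf{First claim.} You correctly identify the inputs (Lemma~\ref{unionorientEDM_lmm}, Corollary~\ref{RnodisomE_crl} with $k\!=\!n$, and a reordering sign), but the key step is then just asserted: ``I expect these reordering signs to contribute $(-1)^g$ in total.'' This is the heart of the argument, and the delicate bookkeeping you flag (the block decomposition $(\det\dbar_\C)^{\otimes(n+1)}\approx(\det\dbar_\C^{\otimes2})^{\otimes(n+1)/2}$ being disrupted by tensoring with a common $\C/\R$ of odd rank) is exactly why the assertion needs a proof, not a guess. The paper avoids this transposition bookkeeping entirely: it packages the comparison into the two $3\times3$ exact squares of Fredholm operators in Figure~\ref{RnodisomE_fig}, which share the common top row $D\!\to\!\wh\fM''\!\to\!\wh\cM$. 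The {\it Exact Squares} property then delivers your expected sign in one stroke as $(-1)^{(\dim\cN_XX^{\phi})(\dim\wh\cM)}\!=\!(-1)^g$ (since $\dim\cN_XX^{\phi}\!=\!n$ is odd and $\dim\wh\cM\!\equiv\!g$), and the intrinsic and pullback orientations are compared directly through that shared top row rather than by reconciling reorderings of $(n\!+\!1)/2$ canonically oriented blocks. Your expected sign is right, but a proof of it is missing, and the route through $\dbar^{\otimes(n+1)}$ is substantially clunkier than what the paper does.

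\textbf{Second claim.} Your idea of deducing it from the first claim via Lemma~\ref{CvsCanorient_lmm} (applied once for genus~$g\!-\!1$ and once for genus~$g$) is a legitimate alternative to the paper's direct proof (which just reruns the exact-square argument with the second statement of Corollary~\ref{RnodisomE_crl}). However, your parity computation is off by one. Writing $c\!=\!\lr{c_1(X,\om),B}/2$, the difference of your two exponents is $\binom{(2-g)+c}{2}\!-\!\binom{(1-g)+c}{2}\!=\!(1\!-\!g)\!+\!c\!\equiv\!1\!+\!g\!+\!c\pmod2$, not $g\!+\!c$ as you claim. Combined with ``opposite'' from the first claim, your stated parity would yield ``same iff $g\!+\!c\!\not\in\!2\Z$,'' which is the negation of the proposition; the correct parity $1\!+\!g\!+\!c$ gives the right conclusion. (Your sign slip $\deg L^*\!=\!+c$ rather than $-c$ is harmless here, since it changes the argument of $\binom{\cdot}{2}$ by $2c$, but the parity claim is a genuine error.)
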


\begin{proof} It is sufficient to establish the claims over an element~$[\wh\u]$
of $\ov\fM_{g-1,\ell+1}''^{\phi;\bu}(B;J)$ with smooth domain $(\wh\Si,\wh\si,\wh\fj)$.
Let $[\u]\!=\!\io_{g,\ell}^E([\wh\u])$;
the domain~$(\Si,\si,\fj)$ of~$\u$ then has one $E$-node and no other nodes.
Let~$\wh\cC$ and~$\cC$ be the marked domains of~$\wh\u$ and~$\u$,
respectively, $\wh{u},u$ be their map components, and 
$$\wh\fM=T_{[\wh\u]}\ov\fM_{g-1,\ell+1}^{\phi;\bu}(B;J),\quad
\wh\fM''=T_{[\wh\u]}\ov\fM_{g-1,\ell+1}''^{\phi;\bu}(B;J),\quad
\fM=T_{[\u]}\ov\fM_{g,\ell}''^{\phi;\bu}(B;J)\,.$$
We denote the relevant tangent spaces of~$[\wh\cC],[\cC]$ by $\wh\cM,\cM$, respectively,
the real CR-operators $D_{\wh{u}^*(TX,\nd\phi)},D_{u^*(TX,\nd\phi)}$ by~$\wh{D},D$,
and the homomorphisms from the vector spaces~$\cN_XX^{\phi}|_{\ev_{\ell+1}(\wh\u)}$
and~$(\cL_{\ell+1}\!\otimes\!\cL_{\ell+1}^-)^{\R}|_{\wh\u}$ to~$\{0\}$
by~$\0_{X^{\phi}}$ and~$\0_{\cL}$.
Let~$\fo_{\C/\R}$, $\fo_{X^{\phi}}$, and~$\fo_{\cL}$ be 
the orientation on~$\C/\R$ induced by the complex orientation of~$\C$ and
the standard orientation of~$\R\!\subset\!\C$,
the orientation on $\det\0_{X^{\phi}}$ 
determined by the real orientation~$(L,[\psi],\fs)$ on $(X,\om,\phi)$,
and the orientation on~$\det\0_{\cL}$ determined by the canonical 
orientation on~$(\cL_{\ell+1}\!\otimes\!\cL_{\ell+1}^-)^{\R}$, respectively.
For $k\!\not\in\!2\Z$, the standard orientation~$\fo_{\C^k/\R^k}$ on~$\C^k/\R^k$
agrees with $\fo_{\C/\R}^{\otimes k}$ if and only if~$(k\!-\!1)/2$ is even.\\

\noindent
An orientation~$\fo$ on~$\det\dbar_{(\Si,\si)}$ 
determines an orientation~$\wh\fo$ on~$\det\dbar_{(\wh\Si,\wh\si)}$ 
via~\eref{RnodisomE_e} with $V_x\!=\!\C$ from~$\fo_{\C/\R}$,
an orientation~$\fo_{\cM}$ on~$\cM$ from the canonical orientation of 
the line orbi-bundle~\eref{RcMisom_e}, and an orientation~$\fo_D$ 
on~$\det D$ via~\eref{ROrientCompl_e2} from~$(L,[\psi],\fs)$.
The orientation~$\wh\fo$ similarly determines an orientation~$\wh\fo_{\cM}$ on~$\wh\cM$ 
and an orientation~$\wh\fo_D$ on~$\det\wh{D}$.
The orientation~$\wh\fo_{\fM}$ on~$\wh\fM$ (resp.~$\fo_{\fM}$ on~$\fM$) 
determined by~$(L,[\psi],\fs)$ via~\eref{ROrientCompl_e2}
is obtained from the orientations~$\wh\fo_D$ and~$\wh\fo_{\cM}$ (resp.~$\fo_D$ and~$\fo_{\cM}$)
via the isomorphism~\eref{RfMisom_e}, which is associated with the middle row in 
the first (resp.~second) exact square of Figure~\ref{RnodisomE_fig}.
The intrinsic (resp.~pullback) orientation~$\wh\fo_{in}''$ (resp.~$\wh\fo_{pu}''$)
induced by~$(L,[\psi],\fs)$ via~\eref{ROrientCompl_e2} 
is obtained from the orientations~$\wh\fo_{\fM}$ and~$\fo_{X^{\phi}}$ (resp.~$\fo_{\fM}$ and~$\fo_{\cL}$)
via the second isomorphism in~\eref{SubIsom_e} (resp.~\eref{CompOrient_e0}), 
which is associated with the middle column in the first (resp.~second) 
exact square of Figure~\ref{RnodisomE_fig}.\\

\begin{figure}
$$\xymatrix{&0\ar[d]&0\ar[d]&0\ar[d]& &&0\ar[d]&0\ar[d]&0\ar[d]& \\
0\ar[r]&D\ar[r]\ar[d]&\wh\fM''\ar[r]\ar[d]&\wh\cM\ar[r]\ar[d]&0&
0\ar[r]&D\ar[r]\ar[d]&\wh\fM''\ar[r]\ar[d]&\wh\cM\ar[r]\ar[d]&0\\
0\ar[r]&\wh{D}\ar[r]\ar[d]&\wh\fM\ar[r]\ar[d]&\wh\cM\ar[r]\ar[d]&0&
0\ar[r]&D\ar[r]\ar[d]&\fM\ar[r]\ar[d]&\cM\ar[r]\ar[d]&0\\
0\ar[r]&\0_{X^{\phi}}\ar[r]\ar[d]&\0_{X^{\phi}}\ar[r]\ar[d]&0&&
&0\ar[r]&\0_{\cL}\ar[r]\ar[d]&\0_{\cL}\ar[r]\ar[d]&0\\
&0&0&&&&&0&0}$$
\caption{Exact squares of Fredholm operators inducing the orientations of Proposition~\ref{RnodisomE_prp}} 
\label{RnodisomE_fig}
\end{figure}

\noindent
Since $n\!\not\in\!2\Z$, the Exact Squares property implies that 
the right vertical isomorphism in~\eref{RnodisomE_e0a} with $k\!=\!n$ 
respects the orientations~$\wh\fo^{\otimes n}$, $\fo^{\otimes n}$, 
and~$\fo_{\C/\R}^{\otimes n}$ if and only if $(n\!-\!1)/2$ is even.
Thus, the right vertical isomorphism in~\eref{RnodisomE_e0a} with $k\!=\!n$ 
respects the orientations~$\wh\fo^{\otimes n}$, $\fo^{\otimes n}$, and~$\fo_{\C^n/\R^n}$.
Along with the first statement of Corollary~\ref{RnodisomE_crl}, 
this implies that the left vertical isomorphism in~\eref{RnodisomE_e0a},
which is associated with the left column in the first exact square of Figure~\ref{RnodisomE_fig},
respects the orientations~$\wh\fo_D$, $\fo_D$, and~$\fo_{X^{\phi}}$ if and only if 
\hbox{$g\!-\!1\!\in\!2\Z$}.
Since the dimension of~$X^{\phi}$ is odd and the dimension of~$\wh\cM$ is of the same parity
as~$g$,
the Exact Squares property in turn implies that the top row in this square
does not respect the orientations~$\fo_D$, $\fo_{in}''$, and~$\wh\fo_{\cM}$.\\

\noindent
By Lemma~\ref{unionorientEDM_lmm}, 
the left vertical isomorphism in~\eref{RnodisomEDM_e0},
which is associated with the right column in the second exact square of Figure~\ref{RnodisomE_fig},
respects the orientations~$\wh\fo_{\cM}$, $\fo_{\cM}$, and~$\fo_{\cL}$.
The Exact Squares property then implies that the top row in this square
also respects the orientations~$\fo_D$, $\fo_{pu}''$, and~$\wh\fo_{\cM}$.
Comparing the conclusions concerning the top rows in the two diagrams,
we obtain the first claim of the proposition.
The second claim is obtained in the same way using the second statement of 
Corollary~\ref{RnodisomE_crl} instead of the first.
\end{proof}

\noindent
With $(X,\om,\phi)$, $J$, $g$, $\ell$, and $B$ as in Theorem~\ref{main_thm} 
and $i\!\in\![\ell]$, let  
$$D_i^{\pm;\bu} \subset \ov\fM_{g,\ell+1}^{\phi;\bu}(B;J)$$
be the subspace of maps from domains $\Si$ so that one of the irreducible components~$\Si_i$ 
of~$\Si$
is~$\P^1$ which has precisely one node, carries only the marked points~$z_i^+$
and $z_{\ell+1}^{\pm}$, and is contracted by the~map.
The (virtual) normal bundle $\cN D_i^{\pm;\bu}$ of $D_i^{\pm;\bu}$ in 
$\ov\fM_{g,\ell+1}^{\phi;\bu}(B;J)$
is canonically isomorphic to the complex line bundle of the smoothings of 
the above node of~$\Si_i$, as in \cite[Lemma~5.2]{RealEnum}.
The complex orientation on~$\cN D_i^{\pm;\bu}$
and an orientation on~$\ov\fM_{g,\ell+1}^{\phi;\bu}(B;J)$ determine an orientation
on~$D_i^{\pm;\bu}$. 

\begin{crl}\label{Rnodisom_crl2}
Suppose $(X,\om,\phi)$, $J$, $(L,[\psi],\fs)$, $g$, $\ell$, and~$B$ are as in Theorem~\ref{main_thm}
and $i\!\in\![\ell]$.
The sign of the diffeomorphism
\BE{Rnodisom2_e} D_i^{\pm;\bu}\approx \ov\fM_{g,\ell}^{\phi;\bu}(B;J)\EE
induced by dropping the last conjugate pair of marked points is~$\pm1$
with respect  to the orientations on~$\ov\fM_{g,\ell}^{\phi;\bu}(X,B)$
and~$\ov\fM_{g,\ell+1}^{\phi;\bu}(X,B)$
induced by~$(L,[\psi],\fs)$ via~\eref{ROrientCompl_e2}
and the complex orientation on~$\cN D_i^{\pm;\bu}$.
If in addition $\wt\phi$ is a conjugation on~$L$ lifting~$\phi$,
the same property holds with~\eref{ROrientCompl_e2} replaced by~\eref{ROrientCompl_e3}.
\end{crl}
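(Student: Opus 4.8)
The plan is to realise the diffeomorphism~\eref{Rnodisom2_e} as the restriction to $D_i^{\pm;\bu}$ of the forgetful morphism $\ff_{\ell+1}\!:\ov\fM_{g,\ell+1}^{\phi;\bu}(B;J)\!\lra\!\ov\fM_{g,\ell}^{\phi;\bu}(B;J)$ dropping the conjugate pair~$(z_{\ell+1}^+,z_{\ell+1}^-)$ of marked points, and to exploit that $D_i^{\pm;\bu}$ is the image of a section of~$\ff_{\ell+1}$. As in the proofs of Propositions~\ref{unionorient_prp}-\ref{RnodisomE_prp}, it is enough to work over an element~$[\u]$ of $D_i^{\pm;\bu}$ whose main component is smooth and stable with its markings, so that the domain of~$\u$ is a smooth symmetric surface~$\wh\Si$ with a conjugate pair of ghost~$\P^1$-tails attached, the tail~$\Si_i$ carrying~$z_i^+$ together with the node being smoothed. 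I write~$\cV$ for the vertical tangent line orbi-bundle of~$\ff_{\ell+1}$; it is canonically oriented by the complex structure on the fibers, and $\cV|_{[\u]}\!=\!T_{z_{\ell+1}^+}\wh\Si$. By the section property, $T_{[\u]}D_i^{\pm;\bu}$ is identified with $\ff_{\ell+1}^*T_{\ff_{\ell+1}([\u])}\ov\fM_{g,\ell}^{\phi;\bu}(B;J)$ and the normal line~$\cN_{[\u]}D_i^{\pm;\bu}$ with~$\cV|_{[\u]}$, while \cite[Lemma~5.2]{RealEnum} identifies $\cN_{[\u]}D_i^{\pm;\bu}$ with the node-smoothing line of~$\Si_i$.

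First I would establish that forgetting a conjugate pair of marked points multiplies the induced orientation by the complex orientation of~$\cV$: the orientation on $\ov\fM_{g,\ell+1}^{\phi;\bu}(B;J)$ equals $\ff_{\ell+1}^*$ of the orientation on $\ov\fM_{g,\ell}^{\phi;\bu}(B;J)$ tensored with the complex orientation of~$\cV$. This is a consequence of the construction in Section~\ref{MSorient_subs1}: forgetting~$(z_{\ell+1}^+,z_{\ell+1}^-)$ leaves $\det D_{(TX,\nd\phi)}$ and the homotopy class of isomorphisms~\eref{ROisom_e} unchanged, enlarges $T(\R\ov\cM^{\bu})$ only by the complex summand~$\cV$, which enters the construction of the orientation on~\eref{RcMisom_e} (Steps~\ref{LSDolb_it}-\ref{ROrientTwist_it}) carrying its complex orientation, while the twisting in~\ref{ROrientTwist_it} is unaffected because neither~$g$ nor~$|\si|$ changes; since~$\cV$ has even rank, no reordering sign arises. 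Combined with the previous paragraph, the sign of~\eref{Rnodisom2_e} is then the sign by which the complex orientation of~$\cN_{[\u]}D_i^{\pm;\bu}$, as the node-smoothing line of~$\Si_i$, differs from the complex orientation of~$\cV|_{[\u]}$ under the above identification of these two lines.

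To compute that sign I would use that a node-smoothing deformation preserving~$\ff_{\ell+1}([\u])$ un-bubbles the tail carrying~$z_{\ell+1}^+$ and moves~$z_{\ell+1}^+$ off its collision point. For~$D_i^{+;\bu}$ the node being smoothed lies on~$\Si_i$, the very component carrying~$z_{\ell+1}^+$, so the identification of~$\cN_{[\u]}D_i^{+;\bu}$ with~$\cV|_{[\u]}$ is $\C$-linear and the sign is~$+1$. For~$D_i^{-;\bu}$ the node being smoothed lies on~$\Si_i$, which carries~$z_i^+$, while~$z_{\ell+1}^+$ lies on the conjugate component~$\wh\si(\Si_i)$; the smoothing parameter of the node of~$\Si_i$ is the~$\wh\si$-image of the one directly governing the motion of~$z_{\ell+1}^+$, so the identification is obtained from a $\C$-linear one by applying~$\nd\wh\si$ once and is thus $\C$-antilinear, giving the sign~$-1$. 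Since every bundle being reordered has even rank, no further signs arise; this proves the first claim. The second claim I would deduce from the first and Lemma~\ref{CvsCanorient_lmm} applied fiberwise with $\rk_{\C}\!L\!=\!1$ and $\deg L\!=\!-\lr{c_1(X,\om),B}/2$: the orientations induced by~$(L,[\psi],\fs)$ via~\eref{ROrientCompl_e3} and via~\eref{ROrientCompl_e2}, on $\ov\fM_{g,\ell}^{\phi;\bu}(B;J)$ and on $\ov\fM_{g,\ell+1}^{\phi;\bu}(B;J)$ alike, differ by one and the same sign, which cancels in the comparison, while~$\cN D_i^{\pm;\bu}$ is unaffected.

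The step I expect to be the main obstacle is the last comparison: matching the node-smoothing line~$\cN D_i^{\pm;\bu}$ of \cite[Lemma~5.2]{RealEnum} with the vertical tangent~$\cV$ of~$\ff_{\ell+1}$ as \emph{oriented} real line orbi-bundles, and verifying cleanly that exactly one factor of~$\nd\wh\si$ enters for~$D_i^{-;\bu}$ and none for~$D_i^{+;\bu}$. A secondary technical point is checking the forgetful-map compatibility against the explicit construction~\ref{LSDolb_it}-\ref{ROrientTwist_it} of the orientation on the line orbi-bundle~\eref{RcMisom_e}, in particular that the added conjugate pair of marked points contributes only the complex summand~$\cV$ with its complex orientation and no extra sign.
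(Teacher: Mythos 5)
Your approach is genuinely different from the paper's. The paper realises the inverse of~\eref{Rnodisom2_e} as the composition $q\circ\wt\io_{g;i}^{\pm}\circ(\id,\vph_i^{\pm})$, where $\wt\io_{g;i}^{\pm}$ takes the disjoint union with a constant doublet in $\fD\ov\fM_{0,3}^{\phi;\pm}(X)\subset\ov\fM_{-1,3}^{\phi;\bu}(0;J)$ and $q$ is the restriction of the first node-identifying immersion in~\eref{Riogldfn_e2}; the sign is then read off from Proposition~\ref{unionorient_prp} (with $g_2\!=\!-1$, $B_2\!=\!0$, giving $+1$), Proposition~\ref{Dblorient_prp} (with $B\!=\!0$, so the sign is $(-1)^{|S^-|}\!=\!\pm1$), and Proposition~\ref{Rnodisom_prp} (giving $+1$ via~\eref{ROrientCompl_e2}). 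You instead view~\eref{Rnodisom2_e} as the restriction to a section of the forgetful morphism $\ff_{\ell+1}$. The part of your plan comparing $\cN D_i^{\pm;\bu}$ with the vertical tangent~$\cV$ is sound and does correctly locate the $\pm$: the smoothing line of the node of~$\Si_i$ sits at the attachment point~$z_i^+$ in both cases, whereas $\cV$ sits at the collision point of~$z_{\ell+1}^+$, namely~$z_i^+$ for~$D_i^{+;\bu}$ and~$z_i^-\!=\!\wh\si(z_i^+)$ for~$D_i^{-;\bu}$, so exactly one factor of the antilinear~$\nd\wh\si$ enters in the~$-$ case. Your deduction of the second claim from the first via Lemma~\ref{CvsCanorient_lmm} is also correct, since the sign correction is the same on~$\ov\fM_{g,\ell}^{\phi;\bu}(B;J)$ and on~$\ov\fM_{g,\ell+1}^{\phi;\bu}(B;J)$ (same~$g$, same~$\deg L$), and~$\cN D_i^{\pm;\bu}$ carries its complex orientation in both conventions.

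The genuine gap is the compatibility claim you state but do not prove: that the orientation on~$\ov\fM_{g,\ell+1}^{\phi;\bu}(B;J)$ induced by~$(L,[\psi],\fs)$ equals~$\ff_{\ell+1}^*$ of the orientation on~$\ov\fM_{g,\ell}^{\phi;\bu}(B;J)$ tensored with the complex orientation of~$\cV$. Nothing in the paper establishes this; the compatibilities the paper does track are disjoint union (Lemma~\ref{unionorientDM_lmm}), doubling (Proposition~\ref{Dblorient_prp}), and node strata (Propositions~\ref{Rnodisom_prp}-\ref{RnodisomE_prp}), not the forgetful map. Your heuristic --- that adding a conjugate pair contributes a single complex summand through Steps~\ref{LSDolb_it}-\ref{ses_it} --- is a reasonable start, but you would have to verify that no residual sign arises from the real-dualization in Step~\ref{SD_it} (the source of exactly the $(g\!-\!1)$ and~$|S^-|$ signs in the proof of Proposition~\ref{Dblorient_prp}) or from reordering within~\eref{RcMisom_e}. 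Proving this forgetful-map orientation compatibility is of weight comparable to the corollary itself; the paper avoids it entirely by routing through the already-proved propositions. If you do carry out that verification, your argument goes through and would in fact supply a useful additional property of the orientations not recorded in the paper.
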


\begin{proof}
We denote by $\fD\ov\fM_{0,3}^{\phi;\pm}(X)\!\subset\!\ov\fM_{-1,3}^{\phi;\bu}(0;J)$
the topological component of maps from domains consisting of two copies of~$\P^1$
one of which carries the marked points $z_1^+,z_2^{\pm},z_3^+$.
Let
\BE{Rnodisom2_e3}\rho\!:\fD\ov\fM_{0,3}^{\phi;\pm}(X)\lra \ov\fM_{0,3}(0;J)\!\approx\!X\EE
be the diffeomorphism obtained by restricting each map to the copy of~$\P^1$
carrying the marked points $z_1^+,z_2^{\pm},z_3^+$ and
$$\vph_i^{\pm}\!:\ov\fM_{g,\ell}^{\phi;\bu}(B;J)\lra \fD\ov\fM_{0,3}^{\phi;\pm}(X)$$
be the map so that $\ev_i\!=\!\ev_3\!\circ\!\vph_i^{\pm}$.\\

\noindent
We denote~by $\wt{D}_i^{\pm;\bu}\!\subset\!\ov\fM_{g-2,\ell+3}^{\phi;\bu}(B;J)$
the image of the open embedding
\BE{Rnodisom2_e4}
\wt\io_{g;i}^{\pm}\!:
\ov\fM_{g,\ell}^{\phi;\bu}(B;J)\!\times\!\fD\ov\fM_{0,3}^{\phi;\pm}(X)
\lra \ov\fM_{g-2,\ell+3}^{\phi;\bu}(B;J)\EE
sending a pair of maps to their disjoint union and re-indexing the conjugate pairs 
of marked points on the two domains~as 
$$(1,\ldots,i\!-\!1,i,i\!+\!1,\ldots,\ell)\lra\big(1,\ldots,i\!-\!1,\ell\!+\!2,i\!+\!1,\ldots,\ell\big), 
\quad (1,2,3)\lra(i,\ell\!+\!1,\ell\!+\!3).$$
Define
\begin{equation*}\begin{split}
\wt{D}_i'^{\pm;\bu}=\wt{D}_i^{\pm;\bu}\!\cap\!
\ov\fM_{g-2,\ell+3}'^{\phi;\bu}(B;J)
&\equiv\big\{[\u]\!\in\!\wt{D}_i^{\pm;\bu}\!:\!\ev_{\ell+2}([\u])\!=\!\ev_{\ell+3}([\u])\!\big\}\\
&=\big\{\wt\io_{g;i}^{\pm}\!\circ\!(\id,\vph_i^{\pm})\!\big\}
\big(\ov\fM_{g,\ell}^{\phi;\bu}(B;J)\!\big).
\end{split}\end{equation*}
An orientation on $\ov\fM_{g-2,\ell+3}^{\phi;\bu}(B;J)$ induces an orientation 
on $\wt{D}_i'^{\pm;\bu}$ via the first isomorphism in~\eref{SubIsom_e}.\\

\noindent
The first node-identifying immersion in~\eref{Riogldfn_e2} restricts to a diffeomorphism
\BE{Rnodisom2_e5}q\!:\wt{D}_i'^{\pm;\bu}\lra D_i^{\pm;\bu}\,.\EE
The composition $q\!\circ\!\wt\io_{g;i}^{\pm}\!\circ\!(\id,\vph_i^{\pm})$ 
is the inverse of~\eref{Rnodisom2_e}.
By Proposition~\ref{unionorient_prp} with \hbox{$g_2\!=\!-1$} and \hbox{$B_2\!=\!0$},
the diffeomorphism~\eref{Rnodisom2_e4} is orientation-preserving with respect
to the orientations induced by~$(L,[\psi],\fs)$ via
either~\eref{ROrientCompl_e2} or~\eref{ROrientCompl_e3}, 
if $L$ admits a conjugation~$\wt\phi$ lifting~$\phi$ in the last case.
By the first claim of Proposition~\ref{Dblorient_prp} with $B\!=\!0$, 
the sign of the diffeomorphism~\eref{Rnodisom2_e3} is $\pm1$ with respect to
the orientation on the domain induced via~\eref{ROrientCompl_e2} and
the complex orientation on the target.
Thus, the sign of the diffeomorphism 
$$\wt\io_{g;i}^{\pm}\!\circ\!(\id,\vph_i^{\pm})\!:
\ov\fM_{g,\ell}^{\phi;\bu}(B;J)\lra \wt{D}_i'^{\pm;\bu}$$
is~$\pm1$ as well.
By the first claim of Proposition~\ref{Rnodisom_prp}, 
the diffeomorphism~\eref{Rnodisom2_e5} is orientation-preserving with respect to the orientations
induced by~$(L,[\psi],\fs)$ via~\eref{ROrientCompl_e2}
as described above.
This establishes the first claim.\\

\noindent
If in addition $\wt\phi$ is a conjugation on~$L$ lifting~$\phi$
and the moduli spaces of real maps are oriented via~\eref{ROrientCompl_e3}, 
then the sign of the diffeomorphism~\eref{Rnodisom2_e3} is $\mp1$
by the second claim of Proposition~\ref{Dblorient_prp} with $g\!=\!0$
and the diffeomorphism~\eref{Rnodisom2_e5} is orientation-reversing
by the second claim of Proposition~\ref{Rnodisom_prp}.
Thus, the sign of~\eref{Rnodisom2_e} is $\pm1$ in this case as well.
\end{proof}

\subsection{Spaces of rational maps}
\label{MSorient0_subs}

\noindent
Let $S^1\!\subset\!\P^1$ be the unit circle in $\C\!\subset\!\P^1$;
it is preserved by the anti-holomorphic involutions~$\tau$ and~$\eta$ defined
in~\eref{tauetadfn_e}.
For $c\!=\!\tau,\eta$, we orient the group~$G_c$ of holomorphic automorphisms 
of $(\P^1,c)$ via the exact sequence 
$$0\lra T_{\id}S^1\lra T_{\id}G_c\lra T_0\C\lra0$$
from the standard orientations of $S^1$ and $\C$.
For $\ell\!\ge\!2$, we denote by $\cM_{0,\ell}^{\tau}$ the uncompactified moduli space of 
equivalence classes of $(\P^1,\tau)$ with $\ell$~pairs of conjugate marked points. 
The Deligne-Mumford compactification~$\ov\cM_{0,2}^{\tau}$ of $\cM_{0,2}^{\tau}$
includes 3 additional stable real two-component nodal curves.
A diffeomorphism of~$\ov\cM_{0,2}^{\tau}$  with a closed interval is given~by
\BE{cMtaudiff_e}\ov\cM_{0,2}^{\tau}\lra [0,\i] , \quad
\big[(z_1^+,z_1^-),(z_2^+,z_2^-)\big]\lra 
\frac{z_2^+\!-\!z_1^-}{z_2^-\!-\!z_1^-}:\frac{z_2^+\!-\!z_1^+}{z_2^-\!-\!z_1^+}
=\frac{|1\!-\!z_1^+/z_2^-|^2}{|z_1^+\!-\!z_2^+|^2}\,.\EE
It takes the two-component curve with $z_1^+$ and $z_2^-$ on the same component to~$0$
and the two-component curve with $z_1^+$ and $z_2^+$ on the same component to~$\i$.\\

\noindent
For a real symplectic manifold $(X,\om,\phi)$,
$J\!\in\!\cJ_{\om}^{\phi}$,  $\ell\!\in\!\Z^{\ge0}$, and $c\!=\!\tau,\eta$, we denote~by 
$$\fM_{0,\ell}^{\phi,c}(B;J)\subset\ov\fM_{0,\ell}^{\phi}(B;J)$$
the subspace of equivalence classes of maps from~$(\P^1,c)$ and by  
$\fP_{\ell}^{\phi,c}(B;J)$ the space of real degree~$B$ $J$-holomorphic 
(parametrized) maps from $(\P^1,c)$ to~$(X,\phi)$ 
with $\ell$~conjugate pairs of (distinct) marked points.
The group~$G_c$ acts on this space~by
\begin{gather*}
G_c\!\times\!\fP_{\ell}^{\phi,c}(B;J)\lra \fP_{\ell}^{\phi,c}(B;J),\\
g\!\cdot\!\big(u,(z_1^+,z_1^-),\ldots,(z_{\ell}^+,z_{\ell}^-)\!\big)
=\big(u\!\circ\!g^{-1},(g(z_1^+),g(z_1^-)\!),\ldots,(g(z_{\ell}^+),g(z_{\ell}^-)\!)\!\big).
\end{gather*}
Thus,
\begin{alignat}{1}\label{fMcP_e}  
\fM_{0,\ell}^{\phi,c}(B;J)&= \fP_{\ell}^{\phi,c}(B;J)\big/G_c\,,\\
\label{fMcP_e2}
\La_{\R}^{\top}\big(T_{\u}\fP_{\ell}^{\phi,c}(B;J)\!\big)
&\approx \La_{\R}^{\top}\big(T_{\id}G_c\big)\!\otimes\!
\La_{\R}^{\top}\big(T_{[\u]}\fM_{0,0}^{\phi,c}(B;J)\big).  
\end{alignat}
Since $G_{\tau}$ has two topological components, an orientation on $\fP_{\ell}^{\phi,\tau}(B;J)$
might not descend to the quotient~\eref{fMcP_e} via~\eref{fMcP_e2}.\\

\noindent
The (virtual) tangent space of $\fP_0^{\phi,c}(B;J)$ at a $(\phi,c)$-real map~$u$ 
is the index (as a K-theory class) of the linearization of the $\dbar_J$-operator at~$u$.
A trivialization of the real bundle pair~$u^*(TX,\tnd\phi)$ over~$(S^1,c|_{S^1})$
determines an orientation on this index, or equivalently on $\det D_{(TX,\tnd\phi)}|_u$,
via a pinching construction as in the proofs of \cite[Proposition~8.1.4]{FOOO}
and \cite[Lemma~2.5]{Teh}.
If $c\!=\!\tau$, a continuously varying collection of such trivializations 
is determined by  a relative spin structure on the real vector bundle~$TX^{\phi}$
over $X^{\phi}\!\subset\!\!X$;
see the proof of \cite[Theorem~8.1.1]{FOOO} or \cite[Theorem~6.36]{Melissa}.\\

\noindent
An orientation on~$\fP_0^{\phi,c}(B;J)$ induces orientations on 
the spaces~$\fP_{\ell}^{\phi,c}(B;J)$ 
with $\ell\!\in\!\Z^+$ by assigning the fibers of the forgetful morphism
\BE{fPeeldfn_e}\fP_{\ell}^{\phi,c}(B;J)\lra\fP_{\ell-1}^{\phi,c}(B;J)\EE
the canonical complex orientation of the tangent space at 
the first marked point in the last conjugate pair of marked points of each marked map.
By the proof of \cite[Theorem~6.6]{Penka2} with~\eref{ROrientCompl_e3} 
replaced by~\eref{ROrientCompl_e2},
the relative spin structure on~$TX^{\phi}$ associated to
a real orientation on~$(X,\om,\phi)$ induces an orientation on~$\fP_{\ell}^{\phi,\tau}(B;J)$
that descends to this quotient and extends to the stable map compactification.
If $\ell\!\ge\!2$, we can take $(X,B)\!=\!(\pt,0)$ in~\eref{fMcP_e} and~\eref{fMcP_e2}
and obtain an orientation~on
\BE{cM02tau_e}\cM_{0,2}^{\tau}=\fM_{0,2}^{\id,\tau}(\pt,0)\,.\EE
This orientation agrees with the orientation on $\ov\cM_{0,2}^{\tau}$ determined by
the diffeomorphism~\eref{cMtaudiff_e}.\\

\noindent
The construction of the orientation on the real line orbi-bundle~\eref{RcMisom_e}
in the proof of \cite[Proposition~5.9]{RealGWsI} involves 
a somewhat arbitrary sign choice for the Serre duality isomorphism
in Step~\ref{SD_it} on page~\pageref{SD_it} of the present paper.
The (real) dimensions of its domain and target are $3(g\!-\!1)\!+\!2\ell$.
If $g\!\not\in\!2\Z$, this  choice thus has no effect on the homotopy class of this isomorphism
or the resulting orientation of the real line bundle~\eref{RcMisom_e}.
If $g\!\in\!2\Z$, changing this choice changes the resulting orientation of~\eref{RcMisom_e}
and  the orientation on the moduli space $\ov\fM_{g,\ell}^{\phi;\bu}(B;J)$ of real~maps
determined by a real orientation via~\eref{RfMisom_e}.
In light of \cite[Proposition~4.18]{RealGWsII}, the above sign choice is determined 
by a choice of orientation
of the real line bundle~\eref{RcMisom_e}  over~$\ov\cM_{0,2}^{\tau}$.
Since the real CR-operator~$\dbar_{(\P^1,\tau)}$ is surjective and its kernel consists
of the constant $\R$-valued functions on~$\P^1$,
an orientation on~\eref{RcMisom_e} over  $\ov\cM_{0,2}^{\tau}$ 
is determined by an orientation on~$\ov\cM_{0,2}^{\tau}$.
We orient~$\ov\cM_{0,2}^{\tau}$ by the diffeomorphism~\eref{cMtaudiff_e}.

\begin{prp}\label{RelSpinOrient_prp}
Suppose $(X,\om,\phi)$, $J$, $(L,[\psi],\fs)$, $\ell$, and $B$ are as in Theorem~\ref{main_thm}.
The orientations on $\fM_{0,\ell}^{\phi,\tau}(B;J)$ induced 
by the associated relative spin structure on the real vector bundle~$TX^{\phi}$
over $X^{\phi}\!\subset\!X$ and by~$(L,[\psi],\fs)$ via~\eref{RfMisom_e} and~\eref{ROrientCompl_e2} 
are the same if and only if $\lr{c_1(X,\om),B}\!\not\in\!4\Z$.
If in addition $\wt\phi$ is a conjugation on~$L$ lifting~$\phi$,
the former orientation and the orientation induced  by~$(L,[\psi],\fs)$ via~\eref{RfMisom_e} 
and~\eref{ROrientCompl_e3} with $\wt\si\!=\!\wt\tau$
are the same if and only if \hbox{$\lr{c_1(X,\om),B}\!\cong\!2,4$} \!\!\!$\mod8$.
If $L^{\wt\phi}\!\lra\!X^{\phi}$ is orientable, then
the orientations on $\fM_{0,\ell}^{\phi,\tau}(B;J)$ induced by 
the associated spin structure on~$TX^{\phi}$ and the latter orientation are opposite.
\end{prp}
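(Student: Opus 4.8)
The plan is to obtain the third claim from the second claim of the proposition, from the last two claims of Corollary~\ref{RelSpinOrient_crl}, and from the observation that the orientations of $\fM_{0,\ell}^{\phi,\tau}(B;J)$ attached to the associated spin structure and to the associated relative spin structure on $TX^{\phi}$ are produced by one and the same recipe applied to two orientations of a single determinant line. I would begin with the numerology: since $L^{\wt\phi}\!\lra\!X^{\phi}$ is orientable, so is its pullback along the real locus of any $(\phi,\tau)$-real $J$-holomorphic map $u$ of class $B$, that is, the real line bundle $(u^*L)^{u^*\wt\phi}$ over $S^1\!=\!(\P^1)^{\tau}$ is orientable; by the remark preceding Corollary~\ref{RelSpinOrient_crl} this forces $\lr{c_1(L),B}\!\in\!2\Z$, and then $\La_{\C}^{\top}(TX,\nd\phi)\!\approx\!(L,\wt\phi)^{\otimes2}$ (available since $\wt\phi$ is present) gives $\lr{c_1(X,\om),B}\!=\!2\lr{c_1(L),B}\!\in\!4\Z$, equivalently $\deg(u^*TX)\!\in\!4\Z$. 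So only the residues $\lr{c_1(X,\om),B}\!\equiv\!0$ and $\lr{c_1(X,\om),B}\!\equiv\!4\pmod{8}$ arise below.

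Next I would reduce to a comparison over a single map. Both orientations in question arise by orienting the determinant line $\det D_{(u^*TX,u^*\nd\phi)}$ over $(\P^1,\tau)$ --- the associated spin structure and the associated relative spin structure on $TX^{\phi}$ each trivialize $u^*(TX,\nd\phi)$ over $(S^1,\tau)$ and hence orient this line by the pinching construction of \cite[Lemma~6.37]{Melissa} --- then adding the complex orientations at the conjugate pairs of marked points and descending along $\fP_{\ell}^{\phi,\tau}(B;J)\!\lra\!\fM_{0,\ell}^{\phi,\tau}(B;J)$ by means of the fixed orientation of $G_{\tau}$, as in the paragraph preceding the proposition; the descent and the extension over $\ov\fM_{0,\ell}^{\phi;\bu}(B;J)$ are furnished by the proof of \cite[Theorem~6.6]{Penka2}. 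This recipe reverses the orientation of $\fM_{0,\ell}^{\phi,\tau}(B;J)$ exactly when the orientation fed into $\det D_{(u^*TX,u^*\nd\phi)}$ is reversed, the latter entering as a single tensor factor with everything else held fixed; and by continuity it suffices to compare over one map of class $B$ with smooth domain, the comparison sign then depending only on $\lr{c_1(X,\om),B}$. Hence the orientation of $\fM_{0,\ell}^{\phi,\tau}(B;J)$ from the associated spin structure equals the one from the associated relative spin structure times the sign by which, over $(\P^1,\tau)$, these two structures orient $\det D_{(u^*TX,u^*\nd\phi)}$; by the last two claims of Corollary~\ref{RelSpinOrient_crl} applied to $(V,\vph)\!=\!(u^*TX,u^*\nd\phi)$, whose degree is in $4\Z$, this sign is $+1$ when $\lr{c_1(X,\om),B}\!\equiv\!0\pmod{8}$ and $-1$ when $\lr{c_1(X,\om),B}\!\equiv\!4\pmod{8}$.

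Finally I would feed in the second claim of the proposition, which says that the orientation of $\fM_{0,\ell}^{\phi,\tau}(B;J)$ from the associated relative spin structure coincides with the one induced by $(L,[\psi],\fs)$ via \eref{RfMisom_e} and \eref{ROrientCompl_e3} exactly when $\lr{c_1(X,\om),B}\!\equiv\!2,4\pmod{8}$, that is, within the class $4\Z$, exactly when $\lr{c_1(X,\om),B}\!\equiv\!4\pmod{8}$. Multiplying the two signs: for $\lr{c_1(X,\om),B}\!\equiv\!0\pmod{8}$ the associated spin and associated relative spin orientations agree while the relative one is opposite to the \eref{ROrientCompl_e3}-orientation, and for $\lr{c_1(X,\om),B}\!\equiv\!4\pmod{8}$ they are opposite while the relative one agrees with the \eref{ROrientCompl_e3}-orientation; in both cases the orientation from the associated spin structure is opposite to the one induced via \eref{ROrientCompl_e3}, which is the assertion. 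The ingredient new to the third claim, beyond the second, is just the comparison of the two structures at the determinant-line level taken from Corollary~\ref{RelSpinOrient_crl} (ultimately \cite[Lemma~3.3]{RealGWsII}); the main obstacle, shared with the second claim and not specific to this one, will be re-establishing in the setting of Definition~\ref{realorient_dfn0} that the pinching/FOOO recipe for orienting $\fM_{0,\ell}^{\phi,\tau}(B;J)$ from a spin structure agrees with the isomorphism \eref{RfMisom_e} up to the expected universal sign --- the content of \cite[Theorem~6.6]{Penka2}, whose proof one reruns with \eref{ROrientCompl_e3} in place of \eref{ROrientCompl_e2} --- after which the rest is the reduction to a single map and the mod-$8$ bookkeeping above, the one point to watch being that the comparison sign is locally constant over the connected components of $\fM_{0,\ell}^{\phi,\tau}(B;J)$.
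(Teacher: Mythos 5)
Your argument for the third claim is correct and reaches the stated conclusion, but it takes a longer route than the paper's proof, and it addresses only the third of the three claims. The paper handles all three claims at once: it first notes that, because $\dim\cM_{0,2}^{\tau}$ and $\ind D_{(TX,\nd\phi)}$ are both odd, the pinching recipe of \cite[Lemma~6.37]{Melissa} (factoring through $\fP_{\ell}^{\phi,\tau}$ and $\cM_{0,2}^{\tau}$ as in \eref{fMcP_e2} and \eref{fPeeldfn_e}) and the \eref{RfMisom_e}-based recipe, fed the \emph{same} orientations on $\det D_{(TX,\nd\phi)}|_u$ and on $\cM_{0,2}^{\tau}$, produce \emph{opposite} orientations of $\fM_{0,2}^{\phi,\tau}(B;J)$; each claim of the proposition then reads off directly from the corresponding claim of Corollary~\ref{RelSpinOrient_crl}. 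For the third claim this is very short: $L^{\wt\phi}$ orientable gives $\deg V\in4\Z$, so the third claim of the corollary says the spin and \eref{ROrientCompl_e3} orientations on $\det D$ agree, and the universal $-1$ from the recipe comparison gives ``opposite'' at the moduli level.

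You instead compare the spin and relative-spin orientations of $\det D$ by combining the last two claims of Corollary~\ref{RelSpinOrient_crl} (correctly: $+1$ when $\lr{c_1(X,\om),B}\equiv0\bmod8$, $-1$ when $\equiv4\bmod8$), correctly observe that both spin-type moduli orientations use the same pinching recipe so this sign passes to the moduli level unchanged, and then invoke the second claim of the proposition — which already has the pinching-vs.-\eref{RfMisom_e} sign absorbed into it — to reach \eref{ROrientCompl_e3}; the residues cancel to ``opposite'' in both cases. This is a valid alternative deduction, and your preliminary observation that $L^{\wt\phi}$ orientable forces $\lr{c_1(X,\om),B}\in4\Z$ is also right. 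Two caveats: the statement being proved comprises all three claims, and your proposal takes the second as given; since the pinching-vs.-\eref{RfMisom_e} comparison is unavoidable for the first two claims, the paper's direct use of the corollary's third claim is the more economical way to finish. Also, the paper says the proof of \cite[Theorem~6.6]{Penka2} uses \eref{ROrientCompl_e3} and is adapted to the present setting by replacing it with \eref{ROrientCompl_e2} — the reverse of what you wrote.
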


\begin{proof}
Since the fibers of the forgetful morphism~\eref{fPeeldfn_e} are canonically oriented,
it is sufficient to establish the claims for $\ell\!=\!2$.
Orienting $\fM_{0,2}^{\phi,\tau}(B;J)$ via~\eref{fMcP_e2} with $\ell\!=\!2$
and~\eref{fPeeldfn_e} with $\ell\!=\!1,2$ from an orientation on~$\det D_{(TX,\tnd\phi)}|_u$
and the complex orientation of~$\P^1$ is equivalent to orienting this moduli space
via the isomorphism
$$\La_{\R}^{\top}\big(T\fM_{0,2}^{\phi,\tau}(B;J)\!\big)
\approx \ff^*\!\big(\La^{\top}_{\R}(T\cM_{0,2}^{\tau})\!\big)
\!\otimes\!\big(\!\det D_{(TX,\nd\phi)}\big)$$
from the chosen orientation of~$\det D_{(TX,\tnd\phi)}|_u$ and 
the orientation of~$\cM_{0,2}^{\tau}$ determined by~\eref{fMcP_e} and~\eref{fMcP_e2}
with $(X,B)\!=\!(\pt,0)$.
Since the dimension of~$\cM_{0,2}^{\tau}$ and the index of~$D_{(TX,\nd\phi)}$ are~odd,
the orientation on~$\fM_{0,2}^{\phi,\tau}(B;J)$
induced in this way is the opposite of the orientation induced
via the canonical isomorphism~\eref{RfMisom_e} with $(g,\ell)\!=\!(0,2)$
from the same orientations on~$\det D_{(TX,\tnd\phi)}|_u$ and~$\cM_{0,2}^{\tau}$.
By the sentence below~\eref{cM02tau_e} and the following paragraph,
the orientation on~$\cM_{0,2}^{\tau}$ used in all of the relevant approaches
to orienting~$\fM_{0,\ell}^{\phi,\tau}(B;J)$ is the same.
The relevant orientations on~$\det D_{(TX,\nd\phi)}$
are compared by Corollary~\ref{RelSpinOrient_crl} with~$(V,\vph)$ replaced 
by~$u^*(TX,\nd\phi)$. 
Taking into account that 
$$\blr{c_1(TX,\om),B}=2\blr{c_1(L),B}$$
if $\fM_{0,\ell}^{\phi,\tau}(B;J)\!\neq\!\eset$, we obtain the claims.
\end{proof}

\begin{crl}\label{RelSpinOrient_crl2}
Suppose $(X,\om,\phi)$, $J$, $(L,[\psi],\fs)$, and $\ell$ are as in Theorem~\ref{main_thm}.
The natural isomorphism
\BE{RelSpinOrient2_e}\ov\fM_{0,\ell}^{\phi}(0;J)\approx \R\ov\cM_{0,\ell}\!\times\!X^{\phi}\EE
is orientation-reversing with respect to the orientation on 
the left-hand side induced by~$(L,[\psi],\fs)$ via~\eref{RfMisom_e} and~\eref{ROrientCompl_e2},
the orientation on~$\R\ov\cM_{0,\ell}$ induced via the diffeomorphism~\eref{cMtaudiff_e},
and the orientation on~$X^{\phi}$ determined by~$(L,[\psi],\fs)$.
If in addition $\wt\phi$ is a conjugation on~$L$ lifting~$\phi$,
the same property holds with~\eref{ROrientCompl_e2} replaced by~\eref{ROrientCompl_e3}.  
\end{crl}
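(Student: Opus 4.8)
The plan is to reduce Corollary~\ref{RelSpinOrient_crl2} to the $B\!=\!0$ case of Proposition~\ref{RelSpinOrient_prp} together with the known description of the orientation on $\ov\fM_{0,\ell}^{\phi}(0;J)$ from the relative spin structure. First I would observe that for degree-zero maps from $(\P^1,\tau)$, the stable map moduli space $\ov\fM_{0,\ell}^{\phi}(0;J)$ is naturally identified with $\R\ov\cM_{0,\ell}\!\times\!X^{\phi}$, the second factor recording the (constant) image point, and that under this identification the real CR-operator $D_{u^*(TX,\nd\phi)}$ for a constant map $u$ at $x\!\in\!X^{\phi}$ is the standard operator $\dbar_{(\P^1,\tau)}$ tensored with the fixed complex vector space $(T_xX,J_x)$ together with the real form $T_xX^{\phi}$. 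Its determinant line is thus canonically $\La_{\R}^{\top}(T_xX^{\phi})$, since $\ker\dbar_{(\P^1,\tau)}$ consists of the constant $\R$-valued functions and the operator is surjective. Consequently the orientation induced on $\ov\fM_{0,\ell}^{\phi}(0;J)$ by the associated relative spin structure on $TX^{\phi}$ is exactly the product of the $X^{\phi}$-orientation determined by $(L,[\psi],\fs)$ and the orientation on $\R\ov\cM_{0,\ell}$ coming from the diffeomorphism~\eref{cMtaudiff_e} — this is precisely how the relative-spin orientation was set up via~\eref{fMcP_e}, \eref{fMcP_e2}, \eref{fPeeldfn_e}, and~\eref{cM02tau_e}, pushed through the forgetful morphism down to $\ell\!=\!2$ and then built back up by the canonically-oriented fibers of~\eref{fPeeldfn_e}.

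Next I would invoke Proposition~\ref{RelSpinOrient_prp} with $B\!=\!0$: since $\lr{c_1(X,\om),0}\!=\!0\!\in\!4\Z$, the orientation on $\fM_{0,\ell}^{\phi,\tau}(0;J)$ induced by the relative spin structure and the orientation induced by $(L,[\psi],\fs)$ via~\eref{RfMisom_e} and~\eref{ROrientCompl_e2} are \emph{opposite}. Combining this with the previous paragraph — where the relative-spin orientation was identified with the \emph{product} of the $\R\ov\cM_{0,\ell}$-orientation and the $X^{\phi}$-orientation — yields immediately that the isomorphism~\eref{RelSpinOrient2_e} is orientation-reversing with respect to the orientation induced by $(L,[\psi],\fs)$ via~\eref{RfMisom_e} and~\eref{ROrientCompl_e2}. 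For the second claim, I would apply Lemma~\ref{CvsCanorient_lmm} (or equivalently the relevant clause of Proposition~\ref{RelSpinOrient_prp}) to pass from~\eref{ROrientCompl_e2} to~\eref{ROrientCompl_e3}: with $\deg L\!=\!-\lr{c_1(X,\om),B}/2\!=\!0$ and $g\!=\!0$, $\rk_\C L\!=\!1$, the comparison quantities in both statements vanish modulo $2$, so the two constructions give the same orientation on $\ov\fM_{0,\ell}^{\phi}(0;J)$ in the degree-zero case, and the orientation-reversing conclusion persists verbatim.

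The main obstacle I anticipate is bookkeeping the degree-zero identification $\ov\fM_{0,\ell}^{\phi}(0;J)\!\approx\!\R\ov\cM_{0,\ell}\!\times\!X^{\phi}$ compatibly with \emph{all} the auxiliary orientation conventions simultaneously: the canonical orientation of the fibers of~\eref{fPeeldfn_e}, the orientation of $\cM_{0,2}^{\tau}$ coming from~\eref{cMtaudiff_e} versus the one coming from~\eref{fMcP_e}--\eref{fMcP_e2} with $(X,B)\!=\!(\pt,0)$ (which agree, as recorded below~\eref{cM02tau_e}), and the parity shift in~\eref{RfMisom_e} versus~\eref{fMcP_e2} that was already noted in the proof of Proposition~\ref{RelSpinOrient_prp} — the index of $D_{(TX,\nd\phi)}$ for a degree-zero map is $n\!\equiv\!1\pmod 2$, so there is a sign to track there. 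Because $n\!\not\in\!2\Z$, the parity of $\dim\R\ov\cM_{0,\ell}$ and of the $X^{\phi}$-factor interact, and I would need to check that the net sign produced by these interactions is exactly the one predicted by Proposition~\ref{RelSpinOrient_prp} with $B\!=\!0$ rather than accidentally off by one; but this is the same computation already carried out in that proof, specialized to $B\!=\!0$, so no new input is required. One minor point to state carefully is that $\R\ov\cM_{0,\ell}$ (connected domains only) is the relevant factor here, so the disjoint-union subtleties of Proposition~\ref{unionorient_prp} do not enter.
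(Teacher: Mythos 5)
Your argument is correct and supplies the derivation the paper evidently intends, since Corollary~\ref{RelSpinOrient_crl2} is stated immediately after Proposition~\ref{RelSpinOrient_prp} with no separate proof: for a constant map the relative-spin index orientation on $\det D_{(TX,\nd\phi)}|_u=\La_{\R}^{\top}T_xX^{\phi}$ reduces to the $(L,[\psi],\fs)$-orientation on $T_xX^{\phi}$, the domain factor carries the $\eqref{cMtaudiff_e}$-orientation by the sentence after~\eref{cM02tau_e}, so the relative-spin orientation on $\ov\fM_{0,\ell}^{\phi}(0;J)$ is the product orientation, and then Proposition~\ref{RelSpinOrient_prp} at $B\!=\!0$ (where $\lr{c_1(X,\om),0}\!=\!0\!\in\!4\Z$) makes that product orientation opposite to the one from $(L,[\psi],\fs)$ via~\eref{ROrientCompl_e2}. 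Your handling of the~\eref{ROrientCompl_e3} case, by observing that the parity of Lemma~\ref{CvsCanorient_lmm} (or the exponent in Remark~\ref{OrientComp_rmk}) vanishes for $g\!=\!0$, $\lr{c_1(X,\om),B}\!=\!0$, is also exactly the intended step.
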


\begin{rmk}\label{RelSpinOrient_rmk}
The orientation on~$\ov\cM_{0,2}^{\tau}$ determined by the diffeomorphism~\eref{cMtaudiff_e}
is as above \cite[Lemma~5.4]{Penka2} and in \cite[Section~12.3]{SpinPin}.
It is the opposite of the orientation taken 
in \cite[Section~3]{RealEnum} and \cite[Section~1.4]{RealGWsII},
which is also used in \cite{RealGWsIII} and~\cite{RealGWvsEnum}.
The order of the two factors on the right-hand side of~\eref{fMcP_e2} 
is opposite from \cite[(1.14)]{RealGWsII}.
This does not impact the orientation on $\fM_{0,\ell}^{\phi,\tau}(B;J)$
induced from an orientation of~$\fP_{\ell}^{\phi}(B;J)$
whenever $(X,\om,\phi)$ and $(L,[\psi],\fs)$ are as in Theorem~\ref{main_thm}
because the dimension~\eref{RfMdim_e} of~$\fM_{0,\ell}^{\phi,\tau}(B;J)$ is then even.
However, this reverses the induced orientation on the moduli spaces
with $(X,B)\!=\!(\pt,0)$.
Thus, the property stated after~\eref{cM02tau_e} with the present conventions
is equivalent to this property with the conventions in~\cite{RealGWsII}.
\end{rmk}
  
\begin{rmk}\label{OrientComp_rmk}	
Let $(X,\om,\phi)$, $J$, $(L,[\psi],\fs)$, $g,\ell$, and $B$ be as in Theorem~\ref{main_thm}.	
If in addition $\wt\phi$ is a conjugation on~$L$ lifting~$\phi$,
the orientations on $\ov\fM_{g,\ell}^{\phi;\bu}(B;J)$ induced via~\eref{RfMisom_e}
along with either~\eref{ROrientCompl_e2} with the projection orientation 
on~$\det D_{(L^*\oplus\si^*\ov{L}^*,\si_{L^*}^{\oplus})}$
or~\eref{ROrientCompl_e3} with the canonical orientation on~$\det D_{2(L^*,\wt\si^*)}$
differ by~$(-1)^{\ve}$ with 
$$\ve=\frac12\big(g\!+\!\lr{c_1(X,\om),B}/2\big)\big(g\!-\!1+\!\lr{c_1(X,\om),B}/2\big);$$
this follows from Lemma~\ref{CvsCanorient_lmm} 
with $\rk_{\C}L\!=\!1$ and $\deg L$ replaced by $-\lr{c_1(X,\om),B}/2$.
The terms $\det(2\dbar_{(\Si,\si)})$ in~\eref{ROrientCompl_e2} and~\eref{ROrientCompl_e3}
and
$$\big(\!\det\dbar_{\C}\big)^{\otimes(n+1)}\approx\big(\!\det(2\dbar_{\C})\!\big)^{\otimes(n+1)/2}$$
in~\eref{RMBvfc_e}
could be oriented either via the canonical orientation (as was done for the purposes
of Propositions~\ref{unionorient_prp}-\ref{Rnodisom_prp}, \ref{RnodisomE_prp}, 
and~\ref{RelSpinOrient_prp} and Corollaries~\ref{Rnodisom_crl2} and~\ref{RelSpinOrient_crl2})
or via the projection orientation.
The resulting orientations on $\ov\fM_{g,\ell}^{\phi;\bu}(B;J)$ would differ
by~$(-1)^{\ve}$ with
$$\ve=\frac{n\!-\!1}{2}\!\cdot\!\frac{g(g\!-\!1)}{2}\,;$$
this follows from Lemma~\ref{CvsCanorient_lmm} with $L\!=\!\!\Si\!\times\!\C$. 
This would make the isomorphism~\eref{unionorient_e} orientation-preserving in the first claim
of Proposition~\ref{unionorient_prp}, 
add $g\!-\!1$ to the parity conditions in Propositions~\ref{Dblorient_prp}
and~\ref{RnodisomE_prp}, and switch the conditions in the two claims of 
Proposition~\ref{Dblorient_prp}, but would not impact on
Corollary~\ref{Rnodisom_crl2} or~\ref{RelSpinOrient_crl2}.
The Step~\ref{ROrientStab_it} on page~\pageref{ROrientStab_it} is 
carried out in~\cite{RealGWsI} via~\eref{ROrientCompl_e3} with $L^*\!=\!T\Si$
and the canonical orientations on~$\det D_{2(L^*,\wt\si^*)}$ and~$\det(2\dbar_{(\Si,\si)})$.
Using the projection orientations on these factors instead would change the induced
orientations on the line bundle~\eref{RcMisom_e} and 
on the moduli space~$\ov\fM_{g,\ell}^{\phi;\bu}(B;J)$ by~$(-1)^{g-1}$.
Since this sign is reversed by changing the preferred choice of the orientation
of~$\cM_{0,2}^{\tau}$, as discussed above, this change is immaterial. 
\end{rmk}

\section{Updates on past work and corrections}
\label{updates_sec}

\subsection{Real-orientable symplectic manifolds, {\cite[Sections 1.1,2.1,2.2]{RealGWsIII}}}
\label{RealGWth_subs}
	
\noindent
Sufficient conditions for a real symplectic manifold $(X,\om,\phi)$ to admit
a real orientation in the sense of \cite[Definition~1.2]{RealGWsI},
i.e.~a triple~$(L,[\psi],\fs)$ as in Definition~\ref{realorient_dfn0} in the present paper
with \hbox{$(V,\vph)\!=\!(TX,\nd\phi)$} and a conjugation~$\wt\phi$ on~$L$ lifting~$\phi$, 
are provided in \cite[Section~1.1]{RealGWsIII}.
As indicated below, some real symplectic manifolds admit real orientations in
the weaker sense of Definition~\ref{realorient_dfn0} without a conjugation~$\wt\phi$ on~$L$.
The next proposition is a refined analogue of \cite[Proposition~1.2]{RealGWsIII} 
for the real orientations of Definition~\ref{realorient_dfn0}.

\begin{prp}\label{CYorient_prp}
Let $(X,\om,\phi)$ be a connected real symplectic manifold such~that
$$H_1(X;\Q)=0 \qquad\hbox{and}\qquad  H^2(X^{\phi};\Z_2)=0.$$
The collection of the real orientations~$(L,[\psi],\fs)$ 
on~$(X,\om,\phi)$ can be identified~with
$$\big\{\mu\!\in\!H^2(X;\Z)\!:
c_1(X,\om)\!=\!\mu\!-\!\phi^*\mu\big\}\!\times\!H^1(X^{\phi};\Z_2)\!\times\!\Z_2
\subset H^2(X;\Z)\!\times\!H^1(X^{\phi};\Z_2)\!\times\!\Z_2.$$
\end{prp}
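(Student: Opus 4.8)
\textbf{Proof proposal for Proposition~\ref{CYorient_prp}.}

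The plan is to unpack Definition~\ref{realorient_dfn0} under the two vanishing hypotheses and match each of the three data pieces $L$, $[\psi]$, $\fs$ with a factor in the claimed product. The key simplification is that $H_1(X;\Q)\!=\!0$ forces $H^2(X;\Z)$ to be free (so torsion plays no role and a complex line bundle is determined by $c_1$), while $H^2(X^\phi;\Z_2)\!=\!0$ kills the obstruction and ambiguity for spin structures on $X^\phi$. First I would classify the choices of~$L$ in~\ref{LBP_it0}. A complex line bundle $L\!\lra\!X$ is determined up to isomorphism by $c_1(L)\!=\!\mu\!\in\!H^2(X;\Z)$, which is torsion-free here. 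The second condition in~\eref{realorient_e0}, namely $\La^{\top}_{\C}(TX,\nd\phi)\!\approx\!(L\!\otimes_\C\!\phi^*\ov L,\phi_L^{\otimes})$, translates on first Chern classes to $c_1(X,\om)\!=\!c_1(L)\!-\!\phi^*c_1(L)\!=\!\mu\!-\!\phi^*\mu$ (using $c_1(\phi^*\ov L)\!=\!-\phi^*\mu$); since line bundles over~$X$ are detected by $c_1$ and the relevant conjugations are determined by the underlying bundles here, this Chern-class identity is also sufficient for the existence of an isomorphism of real bundle pairs as in~\eref{realorient_e0}. The first condition $w_2(TX^\phi)\!=\!w_2(L)|_{X^\phi}$ in~\eref{realorient_e0} is automatic because $H^2(X^\phi;\Z_2)\!=\!0$. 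So the set of valid~$L$'s is exactly $\{\mu\!\in\!H^2(X;\Z):c_1(X,\om)\!=\!\mu\!-\!\phi^*\mu\}$, the first factor.

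Next I would count the homotopy classes $[\psi]$ in~\ref{isom_it0} for a fixed such~$L$. The set of isomorphisms of real bundle pairs in~\eref{realorient_e0} is a torsor over the group of automorphisms of the real bundle pair $\La^{\top}_{\C}(TX,\nd\phi)$ covering the identity, i.e. over $C(X^\phi;\tn{U}(1))$-type data; passing to homotopy classes, the torsor becomes one over $H^1(X^\phi;\Z_2)$ (the homotopy classes of maps $X^\phi\!\lra\!\tn{O}(1)\!=\!\Z_2$, which is where the real-bundle-pair automorphisms living over the fixed locus are detected — cf. the analysis in \cite{RealGWsIII}; over $X$ the extra $H^1(X;\Z)$-worth of $\tn{U}(1)$-automorphisms is connected, hence trivial on homotopy classes, because $H_1(X;\Q)\!=\!0$ makes $H^1(X;\Z)$ torsion and the relevant component group vanishes). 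This identifies the $[\psi]$'s with an $H^1(X^\phi;\Z_2)$-torsor; fixing a base point (e.g. the one used in \cite[Proposition~1.2]{RealGWsIII}) gives the second factor. Finally, for fixed $(L,[\psi])$, the spin structures~$\fs$ in~\ref{spin_it0} on the oriented bundle $TX^\phi\!\oplus\!L^*|_{X^\phi}$ over $X^\phi$: since $H^2(X^\phi;\Z_2)\!=\!0$ the spin obstruction $w_2$ vanishes and one such structure exists, and the set of spin structures is a torsor over $H^1(X^\phi;\Z_2)$. Here is the one subtlety I would flag as the \emph{main obstacle}: the statement records the $\fs$-freedom as a single~$\Z_2$, not as $H^1(X^\phi;\Z_2)$. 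The resolution is that changing $[\psi]$ by a class $\alpha\!\in\!H^1(X^\phi;\Z_2)$ also changes the induced orientation and hence the spin structure, so the genuine freedom is the \emph{cokernel} of the action of $H^1(X^\phi;\Z_2)$ on spin structures; I would show this action is simply transitive up to the subgroup of squares / the sign ambiguity, leaving exactly a $\Z_2$ (the "flip both sheets" ambiguity, analogous to the $\Z_2$ in \cite[Proposition~1.2]{RealGWsIII}). Concretely I would fix a reference $(L_0,[\psi_0],\fs_0)$, observe that any real orientation is obtained from it by a triple $(\mu,\alpha,\epsilon)$ with $\mu$ as above, $\alpha\!\in\!H^1(X^\phi;\Z_2)$ twisting $[\psi]$, and $\epsilon\!\in\!\Z_2$ the residual spin-structure flip, and check this assignment is a bijection onto the claimed set — carefully tracking how $\alpha$ feeds into both $[\psi]$ and~$\fs$ so that no double-counting occurs.

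The verification that the correspondence is well defined and bijective is then bookkeeping: injectivity follows because $\mu$ is recovered as $c_1(L)$, $\alpha$ is recovered as the difference class of the two homotopy classes of isomorphisms $[\psi]$ over $X^\phi$, and $\epsilon$ is recovered as the difference of spin structures after correcting for $\alpha$; surjectivity is the existence statements above (a line bundle with any admissible $c_1$, an isomorphism in its homotopy-class torsor, a compatible spin structure) combined with $H^2(X^\phi;\Z_2)\!=\!0$ removing all obstructions. I expect the only place requiring real care is the last one — pinning down that the spin-structure contribution collapses from $H^1(X^\phi;\Z_2)$ to $\Z_2$ once the $[\psi]$-twist is accounted for — and I would handle it by the same device used for \cite[Proposition~1.2]{RealGWsIII}, namely comparing the orientation on $TX^\phi\!\oplus\!L^*|_{X^\phi}$ induced by $[\psi]$ before and after an $\alpha$-twist and noting the spin structures differ precisely by $\alpha$, so the net freedom in the triple $([\psi],\fs)$ over a fixed $L$ is $H^1(X^\phi;\Z_2)\!\times\!\Z_2$ with the $H^1$ coming from $[\psi]$ and the $\Z_2$ being all that survives of $\fs$.
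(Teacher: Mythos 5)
Your decomposition gets the first factor right (the choice of $L$ via $c_1(L)=\mu$), but you have the other two factors interchanged, and the arguments you give for each are wrong. In the paper's proof the $H^1(X^\phi;\Z_2)$ factor enumerates the \emph{spin structures}~$\fs$: since $H^2(X^\phi;\Z_2)=0$ kills $w_2$, the set of spin structures on the oriented bundle $TX^\phi\oplus L^*|_{X^\phi}$ is a torsor over $H^1(X^\phi;\Z_2)$ (SpinPin 1--2 of \cite{SpinPin}), full stop — there is no ``collapse'' of this torsor to a $\Z_2$, and the mechanism you propose for such a collapse (that twisting $[\psi]$ eats up most of the spin-structure freedom) is not how the definition works: a change of $[\psi]$ can at most reverse the induced orientation of $V^{\vph}$, and for either orientation the compatible spin structures still form a full $H^1(X^\phi;\Z_2)$-torsor. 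The $\Z_2$ factor in the statement instead counts the homotopy classes of~$[\psi]$. You assert these form an $H^1(X^\phi;\Z_2)$-torsor because ``automorphisms living over the fixed locus are detected by homotopy classes of maps $X^\phi\lra\tn{O}(1)$'', but this is not correct on two counts: a real-bundle-pair automorphism is a map $\rho\colon X\lra S^1$ with $\rho(x)\rho(\phi(x))=1$, whose restriction to $X^\phi$ lands in $\{\pm1\}$ and whose homotopy class there would be recorded by $H^0(X^\phi;\Z_2)$, not $H^1$; and, more to the point, $H_1(X;\Q)=0$ plus connectedness of $X$ forces $\rho$ to be homotopic to $\pm\id_L$ globally (lift $\rho$ through $\R\lra S^1$ using $\rho_*=0$ on $\pi_1$, then do a linear homotopy to the constant), so there are exactly \emph{two} homotopy classes of automorphisms. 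That is the paper's argument and it is what produces the $\Z_2$.

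A smaller point: you say the Chern-class identity $c_1(X,\om)=\mu-\phi^*\mu$ is sufficient to upgrade the complex isomorphism $\La_\C^{\top}TX\approx L\otimes_\C\phi^*\ov{L}$ to an isomorphism of real bundle pairs because ``the relevant conjugations are determined by the underlying bundles.'' That step is not automatic; it uses $H_1(X;\Q)=0$ again and is exactly the content of \cite[Lemma~2.7]{Teh}, which the paper cites. Your final product set is the same as the paper's, but because the middle and last factors are misattributed and the supporting arguments for each fail, the proof as written has a genuine gap.
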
	
	
\begin{proof}
The complex line bundles $L\!\lra\!X$ such that 
\BE{CYorient_e3}\La_{\C}^{\top}(TX)\approx L\!\otimes_{\C}\!\phi^*\ov{L}\EE
correspond via~$c_1$ to the elements $\mu\!\in\!H^2(X;\Z)$ such that 
\hbox{$c_1(X,\om)\!=\!\mu\!-\!\phi^*\mu$}.
Since 
$$w_2(TX^{\phi}\!\oplus\!L^*|_{X^{\phi}})=0\in H^2(X^{\phi};\Z_2) ,$$
the set of spin structures on $TX^{\phi}\!\oplus\!L^*|_{X^{\phi}}$
for each complex line bundle $L\!\lra\!X$ can be identified with~$H^1(X^{\phi};\Z_2)$;
see the SpinPin~1 and~2 properties in \cite[Section~1.2]{SpinPin}.
By \cite[Lemma~2.7]{Teh}, \hbox{$H_1(X;\Q)\!=\!0$} and \eref{CYorient_e3} 
imply that the real line bundle pairs $\La_{\C}^{\top}(TX,\nd\phi)$ and
$(L\!\otimes_{\C}\!\phi^*\ov{L},\phi_L^{\otimes})$ are isomorphic.\\

\noindent
It remains to show that a real line bundle pair $(L,\wt\phi)$ over~$(X,\phi)$
has precisely two homotopy classes of automorphisms if $H_1(X;\Q)\!=\!0$.
Any such homotopy class contains an automorphism
\BE{CYorient_e7}
(L,\wt\phi)\lra(L,\wt\phi) \qquad v\lra \rho(x)v ~~\forall\,v\!\in\!L_x,~x\!\in\!X,\EE
for some continuous map $\rho\!:X\!\lra\!S^1$ such that $\rho(x)\rho(\phi(x)\!)\!=\!1$ 
for all $x\!\in\!X$.
Since \hbox{$H_1(X;\Q)\!=\!0$}, the homomorphism \hbox{$\rho_*\!:\pi_1(X)\!\lra\!\pi_1(S^1)$} is trivial.
Thus, $\rho$ lifts over the covering~map
$$\R\lra S^1, \qquad  \th\lra \ne^{2\pi\fI\th},$$
to a continuous map $\wt\rho\!:X\!\lra\!S^1$ such that 
$\wt\rho(x)\!+\!\wt\rho(\phi(x)\!)\!\in\!\Z$ for all $x\!\in\!X$.
By adding an integer to~$\wt\rho$, we can assume that
$$\wt\rho(x)\!+\!\wt\rho\big(\phi(x)\!\big)=\eps~~\forall\,x\!\in\!X$$
for some $\eps\!\in\!\{0,1\}$.
The family of automorphisms
$$(L,\wt\phi)\lra(L,\wt\phi), \quad \tau\!\times\!v\lra 
\begin{cases}\ne^{2\pi\fI\tau\wt\rho(x)}v,&\hbox{if}~\eps\!=\!0;\\
-\ne^{\pi\fI\tau(2\wt\rho(x)-1)}v,&\hbox{if}~\eps\!=\!1;
\end{cases} \qquad\forall\,v\!\in\!L_x,~x\!\in\!X,$$
is then a homotopy from $\pm\id_L$ to the isomorphism~\eref{CYorient_e7}.\\

\noindent
On the other hand, the automorphisms~$\id_L$ and~$-\id_L$ of~$(L,\wt\phi)$
are not homotopic.
This is immediate if $X^{\phi}\!\neq\!\eset$ because 
$\rho(x)\!=\!\pm1$ for every $x\!\in\!X^{\phi}$.
If $X^{\phi}\!=\!\eset$, we can choose a continuous map 
$$\al\!:S^1\lra X \qquad\hbox{s.t.}\quad \phi\!\circ\!\al=\al\!\circ\!\fa,$$
where $\fa\!:S^1\!\lra\!S^1$ is the antipodal involution.
By the proof of \cite[Lemma~2.4]{Teh}, the automorphisms~$\id_{\al^*L}$ and~$-\id_{\al^*L}$
of the real bundle pair $\al^*(L,\wt\phi)$ over~$(S^1,\fa)$ are not homotopic.
\end{proof}

\noindent
If $k\!\in\!\Z^{\ge0}$, $\a\!\equiv\!(a_1,\ldots,a_k)\!\in\!(\Z^+)^k$,
and $X_{n;\a}\!\subset\!\P^{n-1}$ (resp.~$X_{2m;\a}\!\subset\!\P^{2m-1}$)
is a complete intersection of multi-degree~$\a$ preserved by~$\tau_n$ (resp.~$\eta_{2m}$),  
then $\tau_{n;\a}\!\equiv\!\tau_n|_{X_{n;\a}}$ (resp.~$\eta_{2m;\a}\!\equiv\!\eta_{2m}|_{X_{2m;\a}}$)
is an anti-symplectic involution on $X_{n;\a}$ (resp.~$X_{2m;\a}$)
with respect to the symplectic form $\om_{n;\a}\!=\!\om_n|_{X_{n;\a}}$
(resp.~$\om_{2m;\a}\!=\!\om_{2m}|_{X_{2m;\a}}$).
Sufficient conditions on $(n,\a)$ and $(m,\a)$ 
for $(X_{n;\a},\om_{n;\a},\tau_{n;\a})$ and $(X_{2m;\a},\om_{2m;\a},\eta_{2m;\a})$
to admit a real orientation $(L,[\psi],\fs)$  with a conjugation~$\wt\phi$ on~$L$ lifting
\hbox{$\phi\!=\!\tau_{n;\a},\eta_{2m;\a}$}
are provided by \cite[Proposition~1.4]{RealGWsIII}.
If these conditions are satisfied, the real symplectic manifold under consideration
carries a natural real orientation in the sense of \cite[Definition~1.2]{RealGWsI} 
with
$$L=\big(\cO_{\P^n}\big((n\!-\!a_1\!-\!\ldots\!-\!a_k)/2\big)\!\big)\big|_{X_{n;\a}}\lra
X_{n;\a}\subset\P^{n-1},$$
where $n\!\equiv\!2m$ in the case of $(X_{2m;\a},\om_{2m;\a},\eta_{2m;\a})$.\\

\noindent
Dropping the requirement of the existence of a conjugation~$\wt\phi$ on~$L$ lifting~$\eta_{2m;\a}$
immediately leads to weakening the condition
\BE{etanacond_e}a_1\!+\!\ldots\!+\!a_k\equiv 2m \mod4\EE
in \cite[Proposition~1.4(2)]{RealGWsIII} to the mod~2 equivalence
(a complex line bundle over~$\P^{2m-1}$ needs to be of an even degree to admit 
a conjugation~$\wt\phi$ lifting~$\eta_{2m}$).
However, the condition \eref{etanacond_e} holds for any $\eta_{2m}$-real complete intersection
$X_{2m;\a}\!\subset\!\P^{2m-1}$  of multi-degree~$\a$
because the odd components~$a_i$ of~$\a$ come in pairs
if such a complete intersection exists.
Thus, every $\eta_{2m}$-real complete intersection $X_{2m;\a}\!\subset\!\P^{2m-1}$ 
admits a natural real orientation in the sense of Definition~\ref{realorient_dfn0}.
On the other hand, no difference is made in the case of $(X_{n;\a},\om_{n;\a},\tau_{n;\a})$,
as every complex line bundle over~$\P^{n-1}$ admits a conjugation~$\wt\phi$ lifting~$\tau_n$.\\
	
\noindent
Suppose $(\Si,\si)$ is a smooth compact connected symmetric surface of genus~$g$ and 
$(X,\phi)$ is a real bundle pair over~$(\Si,\si)$ of degree~$d$.
By a standard construction, any precompact neighborhood of $\Si\!\subset\!X$ 
admits a symplectic form~$\om$ so that $\phi^*\om\!=\!-\om$ and 
$\om|_{T\Si}$ is a volume form on~$\Si$; see \cite[(2.9)]{SympDivConf}, for example.
A complex line bundle~$L$ over~$X$ satisfying~\eref{CYorient_e3} exists 
if and only~if \hbox{$d\!\in\!2\Z$}.
By \cite[Propositions~4.1,4.2]{BHH}, $(X,\phi)$ admits a real orientation~$(L,[\psi],\fs)$
as in Definition~\ref{realorient_dfn0} in such a case,
as the first equation in~\eref{realorient_e0} imposes no restriction on~$L$.
For any such real orientation, $L$ admits a conjugation~$\wt\phi$ lifting~$\phi$
if and only~if either $\Si^{\si}\!\neq\!\eset$ or $g\!+\!d/2\!\not\in\!2\Z$.
In particular, every smooth compact connected genus~$g$ symmetric surface $(\Si,\si)$ 
admits a real orientation $(L,[\psi],\fs)$ as in Definition~\ref{realorient_dfn0},
but $(\Si,\si)$ does not admit a real orientation $(L,[\psi],\fs)$ with a conjugation~$\wt\si$
lifting~$\si$ if (and only~if) $\Si^{\si}\!=\!\eset$ and $g\!\in\!2\Z$.
This is the motivation behind the introduction of the weaker notion of real orientation 
in~\cite{GI}.

\subsection{Associated relative spin structures, {\cite[Corollary~3.8]{RealGWsII}}}
\label{ARSS_sub}

\noindent
The second and third claims of Corollary~\ref{RelSpinOrient_crl} are special cases
of the base cases of \cite[Corollary~3.8]{RealGWsII}.
Unfortunately, the derivation of the full statement of \cite[Corollary~3.8]{RealGWsII}
from its base cases is based on the implicit assumption that the isomorphisms as
in~\eref{reldetdfn_e0} induced in a manner similar to~\eref{ROrientCompl_e3}
commute with disjoint unions as in~\eref{unionorient_e0}.
This is indeed the case in the setting relevant to the derivation of the full statement
of \cite[Corollary~3.8(2)]{RealGWsII} from its base cases in~\cite{RealGWsII};
thus, this statement is correct.
However, this commutativity may not hold in the setting relevant to the derivation 
of the full statement of \cite[Corollary~3.8(1)]{RealGWsII};
this statement needs to be corrected as described below.
Fortunately, \cite[Corollary~3.8]{RealGWsII} beyond its base cases has not been elsewhere~yet
(to the best of our knowledge).\\

\noindent
Let $D_{(V,\vph)}$ be a real CR-operator on a rank~$n$ real bundle pair~$(V,\vph)$
over a smooth symmetric surface~$(\Si,\si)$ and $(L,\wt\phi)$ be a rank~1 real 
bundle pair over~$(\Si,\si)$.
We assume that the real vector bundle~$V^{\vph}$ over the fixed locus~$\Si^{\si}$
of~$\si$ is orientable.
The subject of \cite[Corollary~3.8]{RealGWsII} is a comparison between two homotopy classes
of isomorphisms 
\BE{ARSS_e3}\det D_{(V,\vph)}\approx\det\!\big(n\dbar_{(\Si,\si)}\big),\EE
determined by trivializations of $(L,\wt\phi)$ over cutting circles for $(\Si,\si)$;
these homotopy classes correspond to orientations of 
$(\det D_{(V,\vph)})\!\otimes\!\big(\!\det(n\dbar_{(\Si,\si)})\!\big)$.\\

\noindent
In \cite[Corollary~3.8(1)]{RealGWsII}, it is assumed that~$\Si^{\si}$ separates~$\Si$
and has $m$~topological components, $\Si^{\si}_1,\ldots,\Si^{\si}_m$.
The symmetric surface~$(\Si,\si)$ can then be obtained by doubling a bordered surface~$\Si^b$
along its boundary as in~\cite[(1.6)]{GZ1}.
The boundary~$\prt\Si^b$ of~$\Si^b$ must have $m$~topological components as well.
For each $i\!=\!1,\ldots,m$, let
$$\ve_i(L)=\begin{cases}0,&\hbox{if}~w_1(L^{\wt\phi})|_{\Si^{\si}_i}\!=\!0,\\
1,&\hbox{if}~w_1(L^{\wt\phi})|_{\Si^{\si}_i}\!\neq\!0.\end{cases}$$
We define $m_1(L)\!\in\![m]$ to be the sum of these numbers~$\ve_i(L)$. 
The necessary and sufficient condition for the two resulting homotopy classes of
isomorphism~\eref{ARSS_e3} to be the same in \cite[Corollary~3.8(1)]{RealGWsII} 
should be replaced~by
\BE{ARSS_e5}\deg L-mm_1(L)-\binom{m_1(L)}{2}\in4\Z\,,\EE
for the reasons below.\\

\noindent
By the usual pinching construction, as the proof of \cite[Lemma~6.37]{Melissa}, 
a trivialization~$\psi_{V\oplus 2L}$ of the real vector bundle 
$V^{\vph}\!\oplus\!2L^{\wt\phi}$ over~$\Si^{\si}$ and 
a choice of a half-surface~$\Si^b$ for~$(\Si,\si)$
determine a homotopy class of isomorphisms
\BE{ARSS_e7}\det D_{(V\oplus2L,\vph\oplus2\wt\phi)}\approx 
\det\!\big(\!(n\!+\!2)\dbar_{(\Si,\si)}\big)\,.\EE
Similarly to~\eref{ROrientCompl_e3}, this homotopy class determines 
a homotopy class of isomorphisms~\eref{ARSS_e3}, called 
the \sf{stabilization orientation induced by~$\psi_{V\oplus 2L}$} in \cite[Section~3.2]{RealGWsII}.
Along with the isomorphism~$\Phi_L$ in~\eref{GIgen_e2} composed with the projection of the target
to the first factor, $\psi_{V\oplus 2L}$ also determines a relative spin structure on 
the real vector bundle~$V^{\vph}$ over~$\Si^{\si}$
and thus another homotopy class of isomorphisms~\eref{ARSS_e3},
called the \sf{associated relative spin} (or \sf{ARS}) \sf{orientation
induced by~$\psi_{V\oplus 2L}$} in \cite[Section~3.2]{RealGWsII}.
These two homotopy classes are the same if and only if \eref{ARSS_e5} holds.\\

\noindent
The argument in the proof of \cite[Corollary~3.8]{RealGWsII} starts 
by pinching off two circles near each component~$\Si^{\si}_i$ of~$\Si$,
one circle on each side of~$\Si^{\si}_i$, to form a nodal symmetric surface~$(\Si_0,\si_0)$.
The domain of its normalization,
\BE{ARSS_e8}q\!:(\wt\Si_0,\wt\si_0)\lra(\Si_0,\si_0),\EE 
consists of a copy $(\P_i^1,\tau_i)$
of~$(\P^1,\tau)$ for each component~$\Si^{\si}_i$ of~$\Si$ and 
a doublet $(\Si_0^+\!\sqcup\!\Si_0^-,\si')$ as in~\eref{Dbldfn_e}.
We choose~$\Si_0^+$ so that it is contained in the image of~$\Si^b$ under
the pinching procedure.
Let $x_i^+\!\in\!\Si_0$ be the node separating~$\P_i^1$ from~$\Si_0^+$.
We deform the real bundle pairs~$(V,\vph)$ and~$(L,\wt\phi)$ to 
real bundle pairs~$(V_0,\vph_0)$ and~$(L_0,\wt\phi_0)$ over~$(\Si_0,\si_0)$ 
so that~$\deg L_0|_{\Si_0^+}\!\in\!2\Z$ and~thus
\BE{ARSS_e9} \deg L-\sum_{i=1}^m\deg L_0|_{\P_i^1}\in 4\Z.\EE
Let $(V_i,\vph_i)\!\equiv\!(V_0,\vph_0)|_{(\P_i^1,\tau_i)}$ and 
$(L_i,\wt\phi_i)\!\equiv\!(L_0,\wt\phi_0)|_{(\P_i^1,\tau_i)}$.\\

\noindent
A real CR-operator~$D_{(V_0,\vph_0)}$ on~$(V_0,\vph_0)$ lifts to
a real CR-operator~$q^*D_{(V_0,\vph_0)}$ on~$q^*(V_0,\vph_0)$ as above Lemma~\ref{Rnodisom_lmm}.
The latter is isomorphic to the direct sum of real CR-operators~$D_{(V_i,\vph_i)}$
on~$(V_i,\vph_i)$ and a CR-operator~$D_{V_0^+}$ on~$V_0|_{\Si_0^+}$.
There is a canonical homotopy class of isomorphisms
$$ \det D_{(V,\vph)}\approx \det D_{(V_0,\vph_0)}\,.$$
Similarly to~\eref{Runionorient_e0} and~\eref{Rnodisom_e}, there is also an isomorphism
$$\big(\!\det D_{(V_0,\vph_0)}\big)\!\otimes\!\bigotimes_{i=1}^m\La_{\R}^{\top}\big(V_0|_{x_i^+}\big)
\approx \big(\!\det D_{V_0^+}\big)\!\otimes\!\bigotimes_{i=1}^m
\big(\!\det D_{(V_i,\vph_i)}\big)\,.$$
Combining~the last two isomorphisms with the canonical complex orientations on
$\La_{\R}^{\top}(V_0|_{x_i^+})$ and~$\det D_{V_0^+}$, we obtain a homotopy class of isomorphisms
\BE{ARSS_e11} \det D_{(V,\vph)}
\approx \big(\!\det D_{(V_1,\vph_1)}\big)\!\otimes\!\ldots\! 
\otimes\!\big(\!\det D_{(V_m,\vph_m)}\big).\EE

\vspace{.18in}

\noindent 
The trivialization $\psi_{V\oplus 2L}$ determines a relative spin structure on 
the real vector bundle $V_i^{\vph_i}\!\oplus\!2L_i^{\wt\phi_i}$ over 
the fixed locus $\Si^{\si}_i\!\subset\!\P^1_i$ of~$\tau_i$ for each $i\!\in\![m]$ and thus 
a homotopy class of isomorphisms 
\BE{ARSS_e15}  
\det D_{(V_i,\vph_i)}\approx \det\!\big(n\dbar_{(\P^1_i,\tau_i)}\big).\EE
By the assumption that $\deg L_0|_{\Si_0^+}\!\in\!2\Z$, the diagram 
\BE{ARSS_e17}\begin{split}
\xymatrix{\det D_{(V,\vph)} \ar[rr]^>>>>>>>>>>{\eref{ARSS_e11}}_>>>>>>>>>>{\approx}
\ar[d]_{\eref{ARSS_e3}}^{\approx}&&
\big(\!\det D_{(V_1,\vph_1)}\big)\!\otimes\!\ldots\!
\otimes\!\big(\!\det D_{(V_m,\vph_m)}\big)\ar[d]^{\eref{ARSS_e3}}_{\approx}\\
\det\!\big(n\dbar_{(\Si,\si)}\big) \ar[rr]^>>>>>>>>>>{\eref{ARSS_e11}}_>>>>>>>>>>{\approx}&& 
\det\!\big(n\dbar_{(\P^1_1,\tau_1)}\big)\!\otimes\!\ldots\!
\otimes\!\det\!\big(n\dbar_{(\P^1_m,\tau_m)}\big)}
\end{split}\EE 
with the vertical isomorphisms corresponding to
the ARS orientations induced by~$\psi_{V\oplus 2L}$ commutes up to homotopy.\\

\noindent
The analogue of the diagram~\eref{ARSS_e17} for the isomorphisms~\eref{ARSS_e7} 
induced by~$\psi_{V\oplus 2L}$ also commutes.
Similarly to the reasoning in the proof of Corollary~\ref{unionorient_crl}, 
Lemma~\ref{unionorient_lmm} then implies that the diagram~\eref{ARSS_e17}
with the vertical isomorphisms corresponding to
the stabilization orientations induced by~$\psi_{V\oplus 2L}$ commutes up to homotopy
if and only if the number
\BE{ARSS_e18}\sum_{\begin{subarray}{c}i,j\in[m]\\ i<j\end{subarray}}
\!\!\!\!\big(1\!+\!(\deg L_0|_{\P_i^1}\!-\!1)(\deg L_0|_{\P_j^1}\!-\!1)\!\big)
\cong 
\sum_{\begin{subarray}{c}i,j\in[m]\\ i<j\end{subarray}}
\!\!\!\!\big(\ve_i(L)\!+\!\ve_j(L)\!+\!\ve_i(L)\ve_j(L)\!\big)\mod2\EE
is even.
By Lemma~3.4 and Corollary~3.6 in~\cite{RealGWsII}, the homotopy classes of 
isomorphisms~\eref{ARSS_e15}
corresponding to the ARS and stabilization orientations induced by~$\psi_{V\oplus 2L}$
are the same if and only $\deg L_0|_{\P_i^1}\!-\!\ve_i(L)\!\in\!4\Z$.
Combining this with the end of the previous paragraph and~\eref{ARSS_e9},
we conclude that the homotopy classes of left vertical isomorphisms in~\eref{ARSS_e17}
corresponding to the ARS and stabilization orientations induced by~$\psi_{V\oplus 2L}$
are the same if and only if~\eref{ARSS_e5} holds.\\

\noindent
In \cite[Corollary~3.8(2)]{RealGWsII}, the fixed locus~$\Si^{\si}$ of~$\si$ is not assumed to be
separating, but the real line bundle~$L^{\wt\phi}$ is assumed to be orientable.
There is again a choice of a half-surface~$\Si^b$ doubling to~$(\Si,\si)$;
its boundary consists of~$\Si^{\si}$ and so-called \sf{cross-caps},
i.e.~circles with fixed-point free involutions.
In addition to a trivialization~$\psi_{V\oplus 2L}$ of 
the real vector bundle~$V^{\vph}\!\oplus\!2L^{\wt\phi}$ over~$\Si^{\si}$,
there is also a trivialization~$\psi'_{V\oplus 2L}$ of the real bundle pair 
$(V\!\oplus\!2L,\vph\!\oplus\!2\wt\phi)$ over the union~$\prt_1^c\Si^b$ of the crosscaps.
By the pinching construction in the proof of \cite[Theorem~1.1]{GZ1}, 
$\psi_{V\oplus 2L}$, $\psi'_{V\oplus 2L}$, and~$\Si^b$ determine a homotopy class 
of isomorphisms~\eref{ARSS_e7} and thus a homotopy class of isomorphisms~\eref{ARSS_e3}, 
called  the \sf{stabilization orientation induced by~$\psi_{V\oplus 2L}$ and~$\psi'_{V\oplus 2L}$ } 
in \cite[Section~3.2]{RealGWsII}.
Since the real line bundle $2L^{\wt\phi}$ is orientable, 
the trivialization~$\psi_{V\oplus 2L}$ and the canonical homotopy class of trivializations
of~$2L^{\wt\phi}$ determine a homotopy class of trivializations 
of the real vector bundle~$V^{\vph}$ over~$\Si^{\si}$.
The trivialization~$\psi'_{V\oplus 2L}$ and the canonical homotopy class of trivializations
of~$2(L,\wt\phi)$ over~$\prt_1^c\Si^b$ similarly determine a homotopy class of trivializations
of~$(V,\vph)$ over~$\prt_1^c\Si^b$.
The resulting trivializations of~$V^{\vph}$ and~$(V,\vph)|_{\prt_1^c\Si^b}$ yield
another homotopy class of isomorphisms~\eref{ARSS_e3},
called the \sf{associated spin} (or \sf{AS}) \sf{orientation
induced by~$\psi_{V\oplus 2L}$ and~$\psi'_{V\oplus 2L}$} in \cite[Section~3.2]{RealGWsII}.
These two homotopy classes of isomorphisms~\eref{ARSS_e3} are in fact the same.\\

\noindent
The proof of the last claim is similar to the proof of \cite[Corollary~3.8(1)]{RealGWsII}, 
as corrected above.
In this case, we also pinch off a pair of circles near each topological component 
of~$\prt_1^c\Si^b$ that are interchanged by~$\si$ to form a nodal symmetric surface~$(\Si_0,\si_0)$.
The domain of its normalization~\eref{ARSS_e8} now also contains
a copy~$(\P_i^1,\eta_i)$ of~$(\P^1,\eta)$ for each topological component of~$\prt_1^c\Si^b$.
The analogue of~\eref{ARSS_e17} now also includes isomorphisms
\BE{ARSS_e25}  
\det D_{(V_i,\vph_i)}\approx \det\!\big(n\dbar_{(\P^1_i,\eta_i)}\big)\EE
on the right side.
With the vertical isomorphisms corresponding to the AS orientations induced 
by~$\psi_{V\oplus 2L}$ and~$\psi'_{V\oplus 2L}$,
it again commutes up to homotopy, without any condition on~$L_0|_{\Si_0^+}$ in this~case.
With the vertical isomorphisms corresponding to
the stabilization orientations induced by~$\psi_{V\oplus 2L}$ and~$\psi'_{V\oplus 2L}$,
the analogue of the diagram~\eref{ARSS_e17} 
now commutes up to homotopy as well because $\deg L_0|_{\P_i^1}\!\in\!2\Z$ in this case
and thus the left-hand side of~\eref{ARSS_e18} is even.
By Lemmas~3.3 and~3.7 in~\cite{RealGWsII}, the homotopy classes of isomorphisms~\eref{ARSS_e15}
and~\eref{ARSS_e25}
corresponding to the AS and stabilization orientations induced by~$\psi_{V\oplus 2L}$
and~$\psi'_{V\oplus 2L}$ are the~same. 
Thus, the homotopy classes of left vertical isomorphisms in~\eref{ARSS_e17}
corresponding to the AS and stabilization orientations induced by~$\psi_{V\oplus 2L}$
and~$\psi'_{V\oplus 2L}$ are the~same.

\subsection{Equivariant localization, {\cite[Sections~4,5]{RealGWsIII}}}
\label{EquivLocal_sub}

\noindent
Let $n\!\in\!\Z^+$, $k\!\in\!\Z^{\ge0}$, and $\a\!\in\!(\Z^+)^k$ be as in 
Section~\ref{RealGWth_subs} and $m\!\in\!\Z^{\ge0}$ be the integer part of~$n/2$.
We denote by~$\phi$ the involution~$\tau_n$ or~$\eta_n$ on~$\P^{n-1}$,
with $n\!\in\!2\Z$ in the latter case, and 
by $X_{n;\a}\!\subset\!\P^{n-1}$ a $\phi$-real complete intersection of multi-degree~$\a$,
which we assume exists.
Let
$$|\phi|=\begin{cases}0,&\hbox{if}~\phi\!=\!\tau_n;\\
1,&\hbox{if}~\phi\!=\!\eta_n.
\end{cases}$$
We also assume that $n\!-\!k\!\in\!2\Z$ and that the pair $(n,\a)$ satisfies 
the conditions on real orientability of~$X_{n;\a}$ in \cite[Proposition~1.4]{RealGWsIII}
if $\phi\!=\!\tau_n$.
In particular,
\BE{kacong_e}|\a|\equiv a_1\!+\!\ldots\!+\!a_k\cong k \mod2.\EE
The real symplectic manifold $(X_{n;\a},\om_n|_{X_{n;\a}},\phi|_{X_{n;\a}})$
then admits a real orientation $(L,[\psi],\fs)$ with \hbox{$\deg L\!=\!(n\!-\!|\a|)/2$};
see \cite[Sections~2.1/2]{RealGWsIII} and Section~\ref{RealGWth_subs} above.\\

\noindent
We denote by $\cL\!\in\!H_2(X;\Z)$ the homology class of a linearly embedded $\P^1\!\subset\!\P^{n-1}$
and by~$J_{\P^{n-1}}$ the standard complex structure on~$\P^{n-1}$.
The restriction of the standard $\bT^n$-action on~$\P^{n-1}$
to a subtorus $\bT^m\!\subset\!\bT^n$ commutes with the involution~$\phi$ and 
has the same fixed points
$$P_1\equiv[1,0,\ldots,0], \qquad\ldots,\qquad P_n\equiv[0,\ldots,0,1];$$
see \cite[Section~4.1]{RealGWsIII}.
This restriction induces a $\bT^m$-action on the moduli space 
$$\ov\fM_{g,\ell}^{\phi}\big(\P^{n-1},d\big)\equiv 
\ov\fM_{g,\ell}^{\phi}\big(d\cL;J_{\P^{n-1}}\big)$$
of real genus~$g$ degree~$d$ holomorphic maps to~$(\P^{n-1},\phi)$ with 
$\ell$~conjugate pairs of marked points.
Equivariant localization contributions from some $\bT^m$-fixed loci of $\ov\fM_{g,\ell}^{\phi}(\P^{n-1},d)$
to the integral \cite[(4.4)]{RealGWsIII} are provided by \cite[(4.26)]{RealGWsIII}.
This integral computes twisted real GW-invariants 
of~$(\P^{n-1},\om_n,\phi)$ associated with~$\a$.
For $k\!=\!0$, these invariants are the real GW-invariants of~$(\P^{n-1},\om_n,\phi)$
with the real orientation $(L,[\psi],\fs)$ and a conjugation~$\wt\phi$ on~$L$ lifting~$\phi$
specified in \cite[Section~2.2]{RealGWsIII}.
In this case, \cite[Theorem~4.6]{RealGWsIII} specifies the equivariant
localization contributions from all $\bT^m$-fixed loci
and thus provides a way of computing these real invariants.\\

\noindent
The proof of \cite[(4.26)]{RealGWsIII} is based on
the implicit presumption in \cite[(5.25)]{RealGWsIII} that the isomorphisms~\eref{unionorient_e} 
are orientation-preserving in the relevant cases.
Since the moduli spaces in~\cite{RealGWsIII} are oriented via~\eref{ROrientCompl_e3} and 
the degree~$\fd(e)$ of the real map~$[f]$ corresponding to 
an edge $e\!\in\!\nE_{\R}^{\si}(\Ga)$ is odd
if the corresponding factor in \cite[(4.26)]{RealGWsIII}
is not zero,
the second statement of Proposition~\ref{unionorient_prp} with $g_1,g_2\!=\!0$ implies that 
the sign of the isomorphism in \cite[(5.25)]{RealGWsIII}
is $(-1)^{\eps_{\Ga}(\a)}$ with
$$\eps_{\Ga}(\a)\equiv \frac{n\!-\!2\!-\!k}2\binom{|\nE_{\R}^{\si}(\Ga)|}2
\!+\!\frac{n\!-\!2\!-\!|\a|}2\binom{|\nE_{\R}^{\si}(\Ga)|}2
\cong \bigg(n\!+\!\frac{k\!+\!|\a|}{2}\bigg)\binom{|\nE_{\R}^{\si}(\Ga)|}2
\mod2\,.$$
However, this is of relevance for the purposes of \cite[(4.26)]{RealGWsIII}
only if all components~$a_i$ of~$\a$ are odd.
If $\phi\!=\!\tau_n$, $|\a|\!\cong\!k$ mod~4 then
by the conditions on real orientability of~$X_{n;\a}$ in \cite[Proposition~1.4]{RealGWsIII}.
If $\phi\!=\!\eta_n$, then the odd components~$a_i$ come in pairs and 
thus $|\a|\!\cong\!k$ mod~4 in this case as well.
It follows that the implicit presumption in \cite[(5.25)]{RealGWsIII} is valid in
the relevant cases.\\

\noindent
On the other hand, the real edge contributions $\Cntr_{(\Ga,\si);e}$ 
with $e\!\in\!\nE_{\R}^{\si}(\Ga)$ appearing in \cite[(4.26)]{RealGWsIII} 
and defined in \cite[(4.23)]{RealGWsIII} were obtained
in \cite[Section~5.4]{RealGWsIII} under the assumption that 
the moduli spaces~$\R\ov\cM_{0,\ell}$ are oriented as in~\cite[Section~1.4]{RealGWsII}.
If they are oriented as in Section~\ref{MSorient0_subs} of the present paper,
the right-hand side in \cite[(4.23)]{RealGWsIII} should be multiplied by~$(-1)$.
This does not affect the real odd-genus GW-invariants obtained 
in~\cite{RealGWsIII,RealGWvsEnum}, but does change the sign of
the real even-genus GW-invariants.
By \cite[(1.1)]{RealGWsIII}, this makes the count of real lines through 
a conjugate pair of points in~$(\P^{2m-1},\tau_{2m})$,
$$\blr{H^{2m-1}}_{0,1}^{\P^{2m-1},\tau_{2m}}
\equiv\blr{\tau_0(\pt)\!}_{0,1}^{\!\om_{2m},\tau_{2m}},$$
to be $-1$ with respect to the canonical real orientation 
specified in \cite[Section~2.2]{RealGWsIII}.
In light of the last claim of Corollary~\ref{RelSpinOrient_crl}, 
the $2m\!=\!4$ case of this statement agrees with the second equality
in the first equation in \cite[(13.26)]{SpinPin},
as the $(\phi,\tau)$-moduli spaces in~\cite{SpinPin} are oriented 
from a spin structure via the canonical isomorphism~\eref{RfMisom_e};
see \cite[(12.24)]{SpinPin}.
Changing the sign of the right-hand side in \cite[(4.23)]{RealGWsIII} also
reverses the signs of real GW-invariants~$\GW_{g,d}^{\phi}$ of~$(\P^3,\om_4,\tau_4)$ 
and real enumerative invariants~$\nE_{g,d}^{\phi}$ of~$(\P^3,\tau_4)$ 
with $g\!\in\!2\Z$ in \cite[Table~2]{RealGWvsEnum}.
For $(n,(\a))\!=\!(5,(5))$ and $g\!\in\!2\Z$, this change makes 
the overall equivariant contribution~\cite[(4.26)]{RealGWsIII}
the negative of \cite[(3.22)]{Walcher2}.\\

\noindent
Suppose now that the real moduli spaces are oriented via~\eref{ROrientCompl_e2}.
By the first statement of Proposition~\ref{unionorient_prp} with $g_1,g_2\!=\!0$,
the right-hand sides in \cite[(4.26),(5.25)]{RealGWsIII}
should then be multiplied by~$(-1)$ to the power of~$\frac{n-2-k}2\binom{|\nE_{\R}^{\si}(\Ga)|}2$.
The exponent~$\fs_v$ of the vertex sign appearing in \cite[(4.26)]{RealGWsIII}
and defined below \cite[(4.17)]{RealGWsIII} accounts for 
the sign of Proposition~\ref{Dblorient_prp} with $B\!=\!0$ if $\val_v(\Ga)\!\ge\!3$
and for the sign of Proposition~\ref{Rnodisom_prp} 
(applied $|\nE_v(\Ga)|$ times if $\val_v(\Ga)\!\ge\!3$ and 
$|\nE_v(\Ga)|\!-\!1$ times if $\val_v(\Ga)\!=\!1,2$);
see the second half of \cite[Remark~4.3]{RealGWsIII}.
In this case, it thus reduces~to
$$\fs_v=\sum_{i\in S_v^-}\!\!\big(1\!+\!b_i\!+\!p_i\big).$$
The conjugate edge contributions $\Cntr_{(\Ga,\si);e}$ with $e\!\in\!\nE_+^{\si}(\Ga)$
appearing in \cite[(4.26)]{RealGWsIII} and defined in \cite[(4.21/2)]{RealGWsIII}
include the sign of Proposition~\ref{Dblorient_prp};
see the sentence in \cite[Remark~4.4]{RealGWsIII} referencing [13, Theorem~1.4].
In this case, the expressions in \cite[(4.21/2)]{RealGWsIII} should thus be multiplied
by~$(-1)^{(n-|\a|)\fd(e)/2-1}$.\\

\noindent
It remains to determine the sign $(-1)^{\eps}$ of the real edge contribution~$\Cntr_{(\Ga,\si);e}$ 
with $e\!\in\!\nE_{\R}^{\si}(\Ga)$ appearing in \cite[(4.26)]{RealGWsIII} 
and defined in \cite[(4.23)]{RealGWsIII} if the real moduli spaces are 
oriented via~\eref{ROrientCompl_e2}.
This can be done under the assumption that the edge degree~$\fd(e)$ 
and all components~$a_i$ of~$\a$ are odd.
If $\phi\!=\!\tau_n$ or $n\!-\!|\a|\!\in\!4\Z$, we can apply Lemma~\ref{CvsCanorient_lmm}~with 
$$g=0, \qquad  \rk_{\C}L=1, \quad\hbox{and}\quad  
\deg L=-\frac{n\!-\!|\a|}2\fd(e)$$
to compare the orientations induced via~\eref{ROrientCompl_e3} and~\eref{ROrientCompl_e2}.
Taking into consideration the reversal of the orientations of the moduli spaces~$\R\ov\cM_{0,\ell}$
as above, we conclude that the sign exponent in \cite[(4.26)]{RealGWsIII} should be
\BE{Redgeeps_e} \eps=|\phi|\!+\!\frac{\fd(e)\!+\!1}2\!+\!
\flr{\!\frac{n\!-\!|\a|}4\fd(e)\!}\,.\EE
By comparing with the case $\phi\!=\!\tau_n$ and $n\!-\!|\a|\!+\!2\!\in\!4\Z$
established in \cite[Section~5.4]{RealGWsII},
we show in Section~\ref{EquivLocal_sub2} that \eref{Redgeeps_e} applies in the remaining case,
i.e.~$\phi\!=\!\eta_n$ and $n\!-\!|\a|\!+\!2\!\in\!4\Z$, as~well.\\

\noindent
For any graph $(\Ga,\si)$ with a set~$\Ver$ of vertices and a set~$\Edg$ of edges
as in \cite[Theorem~4.6]{RealGWsIII},
\begin{gather*}
\big|\nE_{\R}^{\si}(\Ga)\big|\!+\!2\big|\nE_+^{\si}(\Ga)\big|
=|\Edg|=\sum_{v\in\nV_+^{\si}(\Ga)}\!\!\!\!\!\!\big|\nE_v(\Ga)\big|,\\
|\Edg|\!+2\!\!\!\!\!\sum_{v\in\nV_+^{\si}(\Ga)}\!\!\!\!\!\!\big(\fg(v)\!-\!1\big)
=g\!-\!1,  \quad\hbox{and}\quad
\sum_{e\in\nE_{\R}^{\si}(\Ga)}\!\!\!\!\!\fd(e)\!+
2\!\!\!\!\!\sum_{e\in\nE_+^{\si}(\Ga)}\!\!\!\!\!\fd(e)=d\,.
\end{gather*}
If the product in \cite[(4.26)]{RealGWsIII} is nonzero, then $|\a|\!\cong\!k$ mod~4
and $\fd(e)\!\not\in\!2\Z$ for every $e\!\in\!\nE_{\R}^{\si}(\Ga)$. 
In such a~case, $n\!-\!|\a|,dg\!\in\!2\Z$ and
\begin{equation*}\begin{split}
&\frac{n\!-\!2\!-\!k}2\binom{|\nE_{\R}^{\si}(\Ga)|}2\!+\!
\sum_{e\in\nE_{\R}^{\si}(\Ga)}\!\!\!\bigg(\!1\!+\!\flr{\!\frac{n\!-\!|\a|}4\fd(e)\!}\!\bigg)
\!\!+\!\sum_{e\in\nE_+^{\si}(\Ga)}\!\!\!\bigg(\frac{n\!-\!|\a|}2\fd(e)\!\!-\!1\!\!\bigg)
\!+\!\!\sum_{v\in\nV_+^{\si}(\Ga)}\!\!\!\!\!\!\!\big(\fg(v)\!\!-\!1\!+\!|\nE_v(\Ga)|\big)\\
&\hspace{2in}
\cong \frac{n\!-\!|\a|}4d\bigg(\frac{n\!-\!|\a|}2d\!\!-\!1\!\!\bigg)
\!+\!\frac{g(g\!-\!1)}2\!+\!\frac{n\!-\!|\a|}2dg\!+\!\big|\nE_{\R}^{\si}(\Ga)\big|\\
&\hspace{2in}
\cong \frac12\bigg(g\!+\!\frac{n\!-\!|\a|}2d\!\bigg)
\bigg(g\!+\!\frac{n\!-\!|\a|}2d\!\!-\!1\bigg)\!+\!(g\!-\!1),
\end{split}\end{equation*}
with $\cong$ denoting the mod 2 congruence.
By Lemma~\ref{CvsCanorient_lmm} with $\rk_{\C}L\!=\!1$ and \hbox{$\deg L\!=\!-(n\!-\!|\a|)d/2$},
the first summand on the right-hand side above accounts for the difference 
in orienting the first factor in~\eref{RfMisom_e} 
via~\eref{ROrientCompl_e2} vs.~\eref{ROrientCompl_e3}.
The second summand on the right-hand side corresponds to 
the reversal of the orientation of the second factor in~\eref{RfMisom_e}
for $g\!\in\!2\Z$.

\subsection{Real GW vs.~curve counts, {\cite{RealGWvsEnum}}}
\label{RealGWvsEnum_sub}

\noindent
Let $(X,\om,\phi)$ be a compact real symplectic sixfold and $B\!\in\!H_2(X;\Z)$.
For $g,h\!\in\!\Z^{\ge0}$, define \hbox{$\wt{C}_{h,B}^X(g)\!\in\!\Q$} by
\BE{wtCdfn_e} \sum_{g=0}^{\i}\wt{C}_{h,B}^X(g)t^{2g}
=\bigg(\frac{\sinh(t/2)}{t/2}\bigg)^{h-1+\lr{c_1(X,\om),B}/2}\,.\EE
If $(L,[\psi],\fs)$ is a real orientation on~$(X,\om,\phi)$
and 
\hbox{$\mu_1,\ldots,\mu_{\ell}\!\in\!H^*(X;\Z)$}, define
\hbox{$\nE_{h,B}^{X,\phi}(\mu_1,\ldots,\mu_{\ell})\!\in\!\Q$} by 
\BE{nEdfn_e}
\int_{[\ov\fM_{g,\ell}^{\phi}(X,B)]^{\vir}}\!\! 
\big(\ev_1^*\mu_1\big)\!\ldots\!\big(\ev_{\ell}^*\mu_{\ell}\big)
=\sum_{\begin{subarray}{c}0\le h\le g\\ g-h\in 2\Z\end{subarray}}\!\!\!\!
\wt{C}_{h,B}^X\big(\tfrac{g-h}{2}\big) \nE_{h,B}^{X,\phi}\big(\mu_1,\ldots,\mu_{\ell}\big);\EE
the left-hand side above is the real genus~$g$ degree~$B$ (primary) GW-invariants of $(X,\om,\phi)$
determined the insertions~$\mu_1,\ldots,\mu_{\ell}$.
By \cite[Theorem~1.1]{RealGWvsEnum}, $\nE_{h,B}^{X,\phi}(\mu_1,\ldots,\mu_{\ell})$	
is a signed count of $\phi$-invariant $J$-holomorphic genus~$h$ degree~$B$ curves
in~$X$ for a generic $J\!\in\!\cJ_{\om}^{\phi}$ that pass through generic representatives 
for the Poincare duals of $\mu_1,\ldots,\mu_{\ell}$,
if the complex line bundle~$L$ admits a conjugation~$\wt\phi$ lifting~$\phi$ and
the moduli space~$\ov\fM_{g,\ell}^{\phi}(X,B)$ is oriented via~\eref{ROrientCompl_e3}.  
In particular, \hbox{$\nE_{h,B}^{X,\phi}(\mu_1,\ldots,\mu_{\ell})\!\in\!\Z$}.
As we note below, the proof in~\cite{RealGWvsEnum} contains a subtle omission.
On the other hand, the conclusion holds even if $L$ does not admit 
a conjugation~$\wt\phi$ lifting~$\phi$,
but $\ov\fM_{g,\ell}^{\phi}(X,B)$ is oriented via~\eref{ROrientCompl_e2}
and $\sinh$ in~\eref{wtCdfn_e} is replaced by~$\sin$.\\
	
\noindent
The last paragraph in the proof of \cite[Proposition~2.4]{RealGWvsEnum} presumes
that the isomorphism~\eref{unionorient_e} in the present paper is orientation-preserving.
This is indeed the case in the proof of \cite[Proposition~2.4]{RealGWvsEnum} because 
\begin{enumerate}[label=(\arabic*),leftmargin=*]

\item the moduli spaces of real maps in~\cite{RealGWvsEnum} are oriented 
via~\eref{ROrientCompl_e3};

\item the proof of \cite[Proposition~2.4]{RealGWvsEnum} depends on applications of
the second statement of Proposition~\ref{unionorient_prp} only with the map~$\u_2$ being a doublet 
with the restriction of the map to each connected component of the domain
being constant and thus with $g_2\!\not\in\!2\Z$ and $B_2\!=\!0$.

\end{enumerate}
If the moduli spaces of real maps in~\cite{RealGWvsEnum} were oriented
via~\eref{ROrientCompl_e2},
the isomorphism~\eref{unionorient_e} in the present paper would still be orientation-preserving
in the applications in the proof of \cite[Proposition~2.4]{RealGWvsEnum}
by the first statement of Proposition~\ref{unionorient_prp} with $g_2\!\not\in\!2\Z$.
The first statement of Proposition~\ref{Dblorient_prp} would be used 
with $B\!=\!0$ and $S^-\!=\!\eset$.
The node-identifying immersion \cite[(3.21)]{RealGWvsEnum},
which is a composition of the first node-identifying immersions
in~\eref{Riogldfn_e2} in the present paper,
would then be orientation-preserving by the first statement of Proposition~\ref{Rnodisom_prp}.
The use of the first instead of the second statements of 
Propositions~\ref{Dblorient_prp} and~\ref{Rnodisom_prp}
would drop the signs of~$(-1)^{\fg(e/w_0)}$ and~$(-1)^{g_i}$
from the right-hand sides in the last equation in the proof of \cite[Proposition~2.4]{RealGWvsEnum} 
and in the statement of this proposition, respectively.
The latter would also be dropped from the right-hand sides of \cite[(2.10)]{RealGWvsEnum}
and of the equations in Section~2 in~\cite{RealGWvsEnum}
after Proposition~2.4.
As a consequence, $\fI$ would drop out from the last two expressions in the last equation
in \cite[Section~2]{RealGWvsEnum}.
This would then establish the statement below~\eref{nEdfn_e} with
$\sinh$ in~\eref{wtCdfn_e} replaced by~$\sin$.

\subsection{Real GW-theory of local 3-folds, {\cite{GI}}}
\label{KleiTQFT_sub}

\noindent
A complex vector bundle~$E$ over a compact symplectic manifold~$(X,\om)$ gives rise to
\sf{twisted GW-invariants} of~$(X,\om)$ that have played a prominent role in (complex) GW-theory.
These are the integrals of the Euler classes of the Index bundles of CR-operators 
on the pullbacks of~$E$ over the moduli spaces $\ov\fM_{g,\ell}^{\bu}(X;J)$
of $J$-holomorphic maps.
When $X$ is a Riemann surface and \hbox{$E\!=\!L_1\!\oplus\!L_2$} is the direct sum of 
complex line bundles~$L_1,L_2$,
such invariants give rise to a topological quantum field theory and encode
the local structure of standard GW-invariants of symplectic sixfolds.
Analogous integrals in the real setting, with \hbox{$L_1\!\approx\!L_2$}, 
are set up and studied in~\cite{GI}.
These integrals give rise to a Klein topological quantum field theory and should encode
the local structure of real GW-invariants of real symplectic threefolds with a certain
real orientation, in the sense of \cite[Definition~5.1]{RealGWsI} or \cite[Definition~A.1]{GI}
(i.e.~Definition~\ref{realorient_dfn0} in the present paper).
Contrary to what the abstract and the introduction in~\cite{GI} might suggest,
the associated real orientation on~$E$ is not specified directly,
as it is not relevant to evaluating the integrals \cite[(2.10)]{GI}.
Instead, real orientations on the symmetric Riemann surfaces $X\!=\!\Si$ 
in the sense of \cite[Definition~A.1]{GI} are indicated in terms of 
the properties of orientations they induce on~$\ov\fM_{g,\ell}^{\phi;\bu}(X;J)$;
the associated real orientation on~$E$ can then be deduced from these specifications.\\

\noindent
The explicit real orientations on $\P^1$ with a conjugate pair of marked points
described in the two cases of the proof of \cite[Proposition~5.2]{GI},
which are not really needed for the purposes of~\cite{GI}, 
are off though due to an error in the first statement of \cite[Lemma~5.3]{GI}; 
the correct statement is provided by Lemma~\ref{CvsCanorient_lmm} in the present paper.
With this correction, the homotopy class of isomorphism~$\th_{tw}$ in the last paragraph
of Case~1 of the proof of this proposition is given by the isomorphism~$\th$
in~\cite[(5.9)]{GI}, which is the inverse of the isomorphism~\eref{GIgen_e} in the present paper, 
with $L\!=\!\Si\!\times\!\C$ and $\phi\!=\!\fc$
(not the composition of~$\th$ with $\id_{\Si\times\C}\!\oplus\!(-\id_{\Si\times\C})$,
as indicated by the second sentence of this paragraph).
The isomorphism \cite[(3.14)]{GI} induced by the real orientation specified in this way 
is the isomorphism induced by~$\phi$ in~\cite[(5.3)]{GI}.
The isomorphism~$\th$ in the second and third paragraphs of Case~2 of the proof 
of this proposition should be replaced 
by $\{\id_{\Si\times\C}\!\oplus\!(-\id_{\Si\times\C})\!\}\!\circ\!\th$.
On the other hand, if the orientation on~$\R\ov\cM_{0,2}$ in the proof of 
\cite[Proposition~5.2]{GI} were chosen as in the present paper and in~\cite{Penka2},
instead of as in~\cite{RealEnum,RealGWsI},
the real orientations in the two cases of the proof of this proposition would have
been as indicated in its proof.
The term {\it solutions} in the first and third paragraphs of Case~2 in this proof refers
to real branched covers.
The only other statement in~\cite{GI} which uses its Lemma~5.3 is Lemma~A.3,
where~$(-1)^{\io}$ should analogously be replaced by~$(-1)^{\io(\io-1)/2}$.
The latter is in turn used only at the end of the proof of \cite[Proposition~5.2]{GI},
where {\it any} dependence of the comparison of Lemma~A.3 on the index~$\io$ alone suffices.
In particular, the statement of \cite[Proposition~5.2]{GI} is not impacted in any~way.\\

\noindent
Similarly to the oversights in~\cite{RealGWsIII} and~\cite{RealGWvsEnum},
the claim of the expected exponential relation between connected and disconnected
real GW-invariants below \cite[Theorem~1.1]{GI} and in~\cite[(2.16)]{GI}
implicitly presumes that the isomorphisms~\eref{unionorient_e}  are orientation-preserving 
when $(X,\phi)\!=\!(\Si,c)$ and thus $n\!=\!1$.
Since the moduli spaces of real maps to~$(\Si,c)$ 
are oriented via~\eref{ROrientCompl_e2} in~\cite{GI}, this is indeed the~case.

\subsection{Wrapup of proof of~\eref{Redgeeps_e}}
\label{EquivLocal_sub2}

\noindent
Let $n\!\in\!2\Z$, $m\!=\!n/2$, $\phi\!=\!\tau_n,\eta_n$, and
$$c=\begin{cases}\tau,&\hbox{if}~\phi\!=\!\tau_n\,;\\
\eta,&\hbox{if}~\phi\!=\!\eta_n\,.\end{cases}$$
We denote by $\ga\!\lra\!\P^{n-1}$ the tautological line bundle.
For $a\!\in\!\Z$, the conjugation~$\phi$ on~$\P^{n-1}$ lifts to a $\C$-antilinear automorphism
$$\fc_{\phi}\!:\ga^{\otimes a}\lra\ga^{\otimes a};$$
see \cite[Section~2.1]{RealGWsIII}.
If $\phi\!=\!\tau_n$ or $a\!\in\!2\Z$, $\fc_{\phi}$ is a conjugation;
if $a\!\not\in\!2\Z$, $\fc_{\eta}^2$ is the multiplication by~$-1$.
Define
$$\big(\cL_{n;a},\wt\phi_{n;a}\big)=
\big(\ga^{*\otimes a}\!\oplus\!\phi^*\ov{\ga^{*\otimes a}},\phi_{\ga^{*\otimes a}}^{\oplus}\big)
\lra \big(\P^{n-1},\phi\big).$$
If $\phi\!=\!\eta_n$, this real bundle pair is isomorphic to the real bundle pair
$(2\ga^{*\otimes a},\wt\eta_{n;1,1}^{(a)})$ in \cite[Section~2.1]{RealGWsIII}
via the~map
$$\big(2\ga^{*\otimes a},\wt\eta_{n;1,1}^{(a)}\big)\lra \big(\cL_{n;a},\wt\phi_{n;a}\big),
\quad
(v,w)\lra \big(v,\fc_{\eta}(w)\!\big).$$
If $\phi\!=\!\tau_n$, $(\cL_{n;a},\wt\phi_{n;a})$ is isomorphic to
the real bundle pair $2(\ga^{*\otimes a},\fc_{\tau})$ via the map~$\Phi_L$ in~\eref{GIgen_e}
with $L\!\equiv\!\ga^{*\otimes a}$.\\

\noindent
Let $\a\!\equiv\!(a_i)_{i\in[k]}$ be a $k$-tuple of elements of $\Z^+\!-\!2\Z$, 
$n'\!=\!n\!-\!2k$, \hbox{$\nu_n(\a)\!=\!m\!-\!|\a|$}, and 
$$\big(\cL_{n;\a},\wt\phi_{n;\a}\big)
=\bigoplus_{i=1}^k\big(\cL_{n;a_i},\wt\phi_{n;a_i}\big)
\lra \big(\P^{n-1},\phi\big).$$
We view $\P^{n'-1}\!\subset\!\P^{n-1}$ as being given by the first $n'$~homogeneous coordinates
so that its normal bundle
$$(V_c,\vph_c)\equiv\bigoplus_{i=1}^k\!\big(\cL_{n';1},\wt\phi_{n';1}\big)
\lra \big(\P^{n'-1},\phi|_{\P^{n'-1}}\big)$$
corresponds to the last $2k$ homogeneous coordinates.
A $\phi$-real complete intersection $X_{n;\a\a}\!\subset\!\P^{n-1}$ is 
cut out by a transverse $(\wt\phi_{n;\a},\phi)$-real section~$s_{n;\a\a}$ of~$\cL_{n;\a}$.
Thus, the sequence
\BE{CIprp_e4}0\lra \big(TX_{n;\a\a},\tnd\phi_{n;\a\a}\big)\lra  
\big(T\P^{n-1},\nd\phi\big)\!\big|_{X_{n;\a\a}}\xlra{\na s_{n;\a\a}}  
\big(\cL_{n;\a},\wt\phi_{n;\a}\big)\!\big|_{X_{n;\a\a}}\lra 0\EE
of real bundle pairs over $X_{n;\a\a}$, 
where $\phi_{n;\a\a}\!=\!\phi|_{X_{n;\a\a}}$, is exact.
Along with the Euler exact sequence of \cite[Lemma~2.1]{RealGWsIII}, 
it determines isomorphisms
\BE{CIprp_e5}\begin{split}
\La_{\C}^{\top}\big(TX_{n;\a\a},\tnd\phi_{n;\a\a}\big)\!\otimes\!
\La_{\C}^2\big(\cL_{n;|\a|},\wt\phi_{n;|\a|}\big)\!\big|_{X_{n;\a\a}}
&\approx \La_{\C}^{\top}\big(T\P^{n-1},\tnd\phi\big)\big|_{X_{n;\a\a}},\\
\La_{\C}^{\top}\big(TX_{n;\a\a},\tnd\phi_{n;\a\a}\big) 
&\approx \La_{\C}^2\big(\cL_{n;\nu_n(\a)},\wt\phi_{n;\nu_n(\a)}\big)\!\big|_{X_{n;\a\a}}.
\end{split}\EE
The last isomorphism determines $L\!\equiv\!L_{n;\a}\!\equiv\!\ga^{*\otimes\nu_n(\a)}$
and~$[\psi]$ as in Definition~\ref{realorient_dfn0}.
If $\phi\!=\!\tau_n$ or $n\!-\!|\a|\!\in\!4\Z$, this isomorphism corresponds
to the isomorphisms~\cite[(2.18)]{RealGWsIII} multiplied by~$2^{n'/2-1}$
under identifications as in~\eref{GIgen_e}.\\

\noindent
Let $S^1\!\subset\!\P^1$ be an embedded circle preserved by the involution~$c$ and
\hbox{$f\!:\P^1\!\lra\!\P^{n-1}$} be a $(\phi,c)$-real continuous map.
Let $s_1$ be a nowhere zero section of the complex line bundle~$f^*\ga^*|_{S^1}$.
For each $a\!\in\!\Z$, the bundle homomorphism
\begin{gather*}
\Psi_a\!: \big(S^1\!\times\!\C^2,c|_{S^1}\!\times\!\fc\big)
\lra  f^*\cL_{n;a}\big|_{S^1}\!=\!
\big(f^*\ga^{*\otimes a}\big|_{S^1}\big)\!\oplus\!
\big\{c|_{S^1}\big\}^{\!*}\ov{\big(f^*\ga^{*\otimes a}\big|_{S^1}\big)},\\
\quad \Psi_a(z,c_1,c_2)=\big( (c_1\!+\!\fI c_2)s_1(z)^{\otimes a},
\big(z,\ov{(c_1\!-\!\fI c_2)}s_1(c(z)\!)^{\otimes a}\big)\!\big),
\end{gather*}
is then an isomorphism of real bundle pairs over $(S^1,c|_{S^1})$
and thus determines a trivialization of its target.
Any other nowhere zero section~$s_1'$ of~$f^*\ga^*|_{S^1}$ is 
of the form $s_1'\!=\!\rho s_1$ for some $\C^*$-valued continuous function~$\rho$ on~$S^1$.
In such a case, $\Psi_a^{-1}\!\circ\!\Psi_a'$ corresponds~to the continuous function
$$S^1\lra \GL_2\C, \quad z\lra
\left(\begin{array}{cc}1& 1\\ -\fI& \fI\end{array}\right)
\left(\begin{array}{cc}\rho(z)^a& 0\\ 0&\ov{\rho(c(z)\!)}^a\end{array}\right)
\left(\begin{array}{cc}1& 1\\ -\fI& \fI\end{array}\right)^{-1}.$$
For $a\!\in\!2\Z$, $\Psi_a^{-1}\!\circ\!\Psi_a'$ thus lies in the canonical homotopy class
of trivializations of $(S^1\!\times\!\C^2,c|_{S^1}\!\times\!\fc)$.
It follows that there is a natural correspondence, independent of~the choice of~$s_1$,
between the homotopy classes of trivializations of~$f^*\cL_{n;a_1}\big|_{S^1}$
and~$f^*\cL_{n;a_2}\big|_{S^1}$  whenever~$a_1$ and~$a_2$ are of the same parity.\\

\noindent
Suppose in addition that $f$ is holomorphic and 
\BE{RealOrientCI_e19a}
0\lra (V,\vph) \lra f^*\big(T\P^{n-1},\tnd\phi|_{T\P^{n-1}}\big)
\lra f^*\big(\cL_{n;\a},\wt\phi_{n;\a}\big)\lra 0\EE
is an exact sequence of holomorphic real bundle pairs over $(\P^1,c)$.
Similarly to~\eref{CIprp_e5}, it determines an isomorphism
\BE{CIprp_e5b}
\La_{\C}^{\top}(V,\vph)
\approx f^*\La_{\C}^2\big(\cL_{n;\nu_n(\a)},\wt\phi_{n;\nu_n(\a)}\big).\EE
It also determines a homotopy class of isomorphisms
\BE{CIprp_e7}\begin{split}
&\big(\P^1\!\times\!\C,c\!\times\!\fc\big)
\!\oplus\!\big(\!(V,\vph)\!\oplus\!f^*(\cL_{n;-\nu_n(\a)},\wt\phi_{n;-\nu_n(\a)})\!\big)
\!\oplus\!f^*\big(\cL_{n;\a},\wt\phi_{n;\a}\big)\\
&\hspace{1in}\approx
\frac{n'\!+\!2}2f^*\big(\cL_{n;1},\wt\phi_{n;1}\big)\!\oplus\!
(k\!-\!1)f^*\big(\cL_{n;1},\wt\phi_{n;1}\big)\!\oplus\!
f^*\big(\cL_{n;-\nu_n(\a)},\wt\phi_{n;-\nu_n(\a)}\!\big).
\end{split}\EE
If $\nu_n(\a)\!\not\in\!2\Z$, then $n'\!+\!2\!\in\!4\Z$.
The restriction of the first summand on the right-hand side above to~$S^1$ then has 
a canonical homotopy class of trivializations;
it is obtained by taking the same trivialization on each of the $(n'\!+\!2)/2$ copies 
of~$f^*(\cL_{n;1},\wt\phi_{n;1})|_{S^1}$.
By the previous paragraph, the trivializations of 
the $k$~factors~$f^*(\cL_{n;a_i},\wt\phi_{n;a_i})$ of~$f^*(\cL_{n;\a},\wt\phi_{n;\a})$
over~$S^1$ correspond to the trivializations of the summands of 
the last two terms on the right-hand side of~\eref{CIprp_e7}.
Thus, there is a canonical homotopy class of trivializations
\BE{CIprp_e9}
\big(\!(V,\vph)\!\oplus\!f^*(\cL_{n;-\nu_n(\a)},\wt\phi_{n;-\nu_n(\a)})\!\big)\!\big|_{S^1}
\approx \big(S^1\!\times\!\C^{n'+1},c|_{S^1}\!\times\!\fc\big)\,.\EE
The induced homotopy class of trivializations on the top exterior powers 
corresponds to the restriction of~\eref{CIprp_e5b} to~$S^1$.
If $\phi\!=\!\tau_n$, the restrictions of these trivializations to the real parts of 
the two sides determine a homotopy class of trivializations 
\BE{CIprp_e9b}V^{\vph}\!\oplus\!f^*L_{n;\a}^{\,*}\big|_{S^1}\approx S^1\!\times\!\R^{n'+1}.\EE
The collection of such trivializations obtained by pulling back the exact sequence~\eref{CIprp_e4}
by continuous maps~$f$ with values in~$X_{n;\a\a}$ determines a spin structure~$\fs$ 
on the real vector bundle 
$TX_{n;\a\a}^{\phi_{n;\a\a}}\!\oplus\!L_{n;\a}^{\,*}|_{X_{n;\a\a}^{\phi_{n;\a\a}}}$
over~$X_{n;\a\a}^{\phi_{n;\a\a}}$.
Via the map~$\Phi_{L_{n;\a}^*}$ as in~\eref{GIgen_e}, this spin structure corresponds
to the canonical spin structure provided by \cite[(2.26)]{RealGWsIII}.\\

\noindent
We now also assume that $\nd f(T \P^1)\!\subset\!V$ and the degree of~$f$ is~$d$.
We denote by~$\dbar_V$, $\dbar_{\P^{n-1}}$,  and~$\dbar_{\cL_{n;\a}}$ 
the standard real Cauchy-Riemann operators on~$(V,\vph)$, $f^*(T\P^{n-1},\tnd\phi|_{T\P^{n-1}})$,
and $f^*(\cL_{n;\a},\wt\phi_{n;\a})$, respectively, and~by 
$$G_c(f)\subset \ker\dbar_V\subset\ker \dbar_{\P^{n-1}}
=T_f\big(\fP_0^{\phi,c}(\P^{n-1},d)\!\big)$$
the tangent space to the orbit of the $G_c$-action on the space of 
parametrized real degree~$d$ holomorphic maps to~$\P^{n-1}$.
The orientation on~$G_c$ induces an orientation on~$G_c(f)$.
The operators~$\dbar_V$ and $\dbar_{\P^{n-1}}$ descend
to operators~$\dbar_V'$ and $\dbar_{\P^{n-1}}'$ on the quotients of their domains
by~$G_c(f)$.
The operators $\dbar_{\P^{n-1}}'$ and $\dbar_{\cL_{n;\a}}$ are surjective;
the kernel of $\dbar_{\P^{n-1}}'$ is canonically isomorphic to the tangent space of
$\fM_{0,0}^{\phi}(\P^{n-1},d)$ at~$[f]$.
The exact sequence~\eref{RealOrientCI_e19a}  gives rise to an exact sequence 
$$0\lra \dbar_V' \lra \dbar_{\P^{n-1}}' \lra \dbar_{\cL_{n;\a}} \lra 0$$
of Fredholm operators over~$(\P^1,c)$.
This exact sequence determines an isomorphism
\BE{RealOrientCI_e23a} \big(\!\det\dbar_V'\big)\!\otimes\!\big(\!\det\dbar_{\cL_{n;\a}}\big)
\approx \big(\!\det\dbar_{\P^{n-1}}'\big).\EE
An orientation on the determinant of~$\dbar_V'$ is induced
by the canonical trivialization~\eref{CIprp_e9} via~\eref{ROrientCompl_e2}.
This orientation corresponds to the orientation of
the kernel of~$\dbar_{\cL_{n;\a}}$ relative to $\fM_{0,0}^{\phi}(\P^{n-1},d)$
for the purposes of computing the $\bT^m$-equivariant contributions
to the integral \cite[(4.4)]{RealGWsIII}.\\

\noindent
Let $e\!\in\!\nE_{\R}^{\si}(\Ga)$ be a real edge as in \cite[(4.23)]{RealGWsIII}.
The $(\phi,c)$-real holomorphic map \hbox{$f\!:\P^1\!\lra\!\P^{n-1}$} corresponding to the edge~$e$
is a degree~$\fd(e)$ cover of the line \hbox{$\P^1_{\vt(v_1)\vt(v_2)}\!\subset\!\P^{n-1}$}
through the $\bT^m$-fixed points \hbox{$P_{\vt(v_1)},P_{\vt(v_2)}\!\in\!\P^{n-1}$}
with $\vt(v_2)\!=\!\vt(v_1)\!+\!1$;
it is branched over the points~$P_{\vt(v_1)}$ and~$P_{\vt(v_2)}$ only.
It can be assumed that $\vt(v_2)\!\le\!n'$.
We denote by~$\dbar_{\P^{n'-1}}$ and~$\dbar_{V_c}$ 
the standard real Cauchy-Riemann operators on~$f^*(T\P^{n'-1},\tnd\phi|_{T\P^{n'-1}})$
and $f^*(V_c,\vph_c)$, respectively, and by~$\dbar_{\P^{n'-1}}'$ 
the operator induced by~$\dbar_{\P^{n'-1}}$ on the quotient of the domain of~$\dbar_{\P^{n'-1}}$
by~$G_c(f)$; these operators are surjective.
The exact sequence 
$$0\lra \dbar_{\P^{n'-1}}' \lra \dbar_{\P^{n-1}}' \lra  \dbar_{V_c} \lra 0$$
of Fredholm operators over~$(\P^1,c)$ and the isomorphism~\eref{RealOrientCI_e23a} determine 
an isomorphism
\BE{RealOrientCI_e23} \big(\!\det\dbar_V'\big)\!\otimes\!\big(\!\det\dbar_{\cL_{n;\a}}\big)
\approx \big(\!\det\dbar_{\P^{n'-1}}'\big)\!\otimes\!\big(\!\det\dbar_{V_c}\big).\EE

\vspace{.15in}

\noindent
We now assume that $\nu_n(\a)\!\not\in\!4\Z$ and $\fd(e)\!\not\in\!2\Z$;
the former implies that $n'\!+\!2\!\in\!4\Z$.
Orientations on the kernels (and thus on the determinants) of~$\dbar_{V_c}$ and~$\dbar_{\cL_{n;\a}}$
can be obtained by evaluating sections and their derivatives at $0\!\in\!\P^1$;
see \cite[(5.39)]{RealGWsIII}.
The resulting orientations are called \sf{complex} orientations in~\cite[Section~5.4]{RealGWsIII}.
The $\bT^m$-equivariant Euler classes of the kernels 
of~$\dbar_{V_c}$ and $\dbar_{\cL_{n;\a}}$ with respect to these orientations
are given by~\cite[(5.42)]{RealGWsIII} and~\cite[(5.41)]{RealGWsIII}, respectively.
An orientation on the kernel (and thus on the determinant) of~$\dbar_{\P^{n'-1}}'$
is determined by a trivialization of the real bundle pair 
$f^*(T\P^{n'-1},\tnd\phi|_{T\P^{n'-1}})$ over~$(S^1,c|_{S^1})$.
Such a trivialization is given by \cite[(6.12)]{Teh}.
The $\bT^m$-equivariant Euler class of the kernel of~$\dbar_{\P^{n'-1}}'$
with respect to the resulting orientation is given by~\cite[(5.50)]{RealGWsIII}.
These three orientations induce an orientation on~$\det\dbar_V'$ via~\eref{RealOrientCI_e23}.
The real edge contribution~$\Cntr_{(\Ga,\si);e}$ with respect to the corresponding orientation of
the kernel of~$\dbar_{\cL_{n;\a}}$ relative to $\fM_{0,0}^{\phi}(\P^{n-1},\fd(e)\!)$
at~$[f]$ 
would be \cite[(5.41)]{RealGWsIII} divided by~the product of \cite[(5.50)]{RealGWsIII} 
and~\cite[(5.42)]{RealGWsIII}. 
This ratio is precisely \cite[(4.23)]{RealGWsIII} without the sign
and the order~$\fd(e)$ of the automorphism group of~$f$ in the denominator.\\

\noindent
The sign of the real edge contribution~$\Cntr_{(\Ga,\si);e}$ thus corresponds 
to the comparison of the two orientations on the determinant of~$\dbar_V'$
obtained via~\eref{RealOrientCI_e23a} and~\eref{RealOrientCI_e23} above.
For \hbox{$(\phi,c)\!=\!(\tau_n,\tau)$}, 
the two orientations have been shown to be the same if and only if
the number~$\eps$ given by~\eref{Redgeeps_e} is~even.
The Fredholm setups in the \hbox{$(\phi,c)\!=\!(\tau_n,\tau)$} and 
\hbox{$(\phi,c)\!=\!(\eta_n,\eta)$} cases can be identified by restricting
to the closed unit disk $\ov\D\!\subset\!\C$ as in the proof of \cite[Proposition~3.5]{GZ1}.
Under this identification, the orientations on the determinant of~$\dbar_V'$
obtained via~\eref{RealOrientCI_e23a} 
and~\eref{RealOrientCI_e23} in the $(\tau_n,\tau)$ case
are associated with the corresponding orientations in 
the $(\eta_n,\eta)$ case.
Thus, the two orientations on the determinant of~$\dbar_V'$ in 
the $(\eta_n,\eta)$ case are also the same 
if and only if \hbox{$\eps\!\in\!2\Z$}.
The extra \hbox{$|\phi|\!=\!1$} summand in the $(\eta_n,\eta)$ case is due to the reversal
of the orientation at the end of \cite[Section~3.2]{RealGWsI}.
This establishes the claim made at the end of Section~\ref{EquivLocal_sub}.

\begin{rmk} In~\cite{RealGWsIII}, the conjugation~$\tau_n$ and 
the anti-holomorphic bundle automorphism~$\fc_{\tau}$ are denoted 
by~$\tau_n'$ and~$\fc_{\tau}'$, respectively. 
\end{rmk}

\vspace{.2in}

\noindent
{\it  Institut de Math\'ematiques de Jussieu - Paris Rive Gauche,
Universit\'e Pierre et Marie Curie, 
4~Place Jussieu,
75252 Paris Cedex 5,
France\\
penka.georgieva@imj-prg.fr}\\

\noindent
{\it Department of Mathematics, Stony Brook University, Stony Brook, NY 11794\\
azinger@math.stonybrook.edu}\\

\end{document}